\providecommand{\otherindexspace}[1]{}
\newtheorem{theorem}{Theorem}[section]
\newtheorem{lemma}[theorem]{Lemma}
\newtheorem{proposition}[theorem]{Proposition}
\newtheorem{remark}[theorem]{Remark}
\newtheorem{definition}[theorem]{Definition}
\newtheorem{assumption}[theorem]{Assumption}
\numberwithin{equation}{section}
\def\p{\partial}
\def\vp{\varepsilon}
\def\cal#1{\mathcal{#1}}
\def \H{\mathbb {H}}
\def \R{\mathbb {R}}
\def\dd{\displaystyle}
\newcommand{\dproof}{\noindent {Proof.} \quad}
\newcommand{\fproof}{\hfill $\square$ \bigskip}
\def\titre{\@title}
\title{A Weak Dynamic Programming Principle for Combined Optimal Stopping / Stochastic Control with 
${\cal E}^f$-expectations}
\author{Roxana Dumitrescu\thanks{Institut für Mathematik, Humboldt-Universität zu Berlin, Unter den Linden 6, 10099 Berlin, Germany,
 email: \textbf{roxana@ceremade.dauphine.fr}. The research leading to these results has received funding from the Region Ile-de-France.} \and Marie-Claire Quenez \thanks{LPMA,
Université Paris 7 Denis Diderot, Boite courrier 7012, 75251 Paris cedex 05, France, email: \textbf{quenez@math.univ-paris-diderot.fr}} \and  Agnès Sulem
\thanks{ INRIA Paris, 3 rue Simone Iff, CS 42112, 75589 Paris Cedex 12, France, and Université Paris-Est, email: \textbf{agnes.sulem@inria.fr}}}
\begin{document}



\date{\today}

\maketitle

\begin{abstract}
 We study a combined optimal control/stopping problem under a nonlinear expectation ${\cal E}^f$ induced by a  BSDE with jumps, in a Markovian framework. The terminal reward function is only supposed to be Borelian. The value 
function $u$ associated with  this problem is generally irregular. 
We first establish a {\em sub- (resp. super-) optimality principle of dynamic programming}  involving its {\em upper- (resp. lower-) 
semicontinuous envelope} $u^*$ (resp. $u_*$). This result, called {\em weak} dynamic programming principle (DPP), extends 
that obtained  in \cite{BT} in the case of a classical expectation 
to the  case of an ${\cal E}^f$-expectation
and  Borelian terminal reward function. 
Using this  {\em weak} DPP, 
 we then prove 
  that  $u^*$ (resp. $u_*$) is a {\em viscosity sub- (resp. super-) solution} of  a nonlinear 
  Hamilton-Jacobi-Bellman variational inequality.

 \end{abstract}


\bigskip

\textbf{Key-words:} Markovian stochastic control, mixed optimal control/stopping, non linear expectation, backward stochastic differential equation, weak dynamic programming principle, 
Hamilton-Jacobi-Bellman variational inequality, viscosity solution, ${\cal E}^f$-expectation.


\section{Introduction}


  Markovian stochastic control problems on a given horizon of time $T$ can typically  be written as 
\begin{equation}\label{linear}
u(0,x)= \sup_{\alpha \in \mathcal{A}}\mathbb{E}[\int_0^T f(\alpha_s,X_s^\alpha)ds+g(X_T^\alpha)], 
\end{equation}
where $\mathcal{A}$ is a set of admissible control processes $\alpha_s$, and 
$(X_s^\alpha)$ is a controlled process of the form 
$$X_s^\alpha=x+\int_0^sb(X_u^\alpha,\alpha_u)du+\int_0^s\sigma(X_u^\alpha,\alpha_u)dW_u+\int_0^s \int_{\mathbb{R}^n}\beta(X_u^\alpha,\alpha_u,e)\Tilde{N}(du,de).$$ 
The random variable $g(X_T^\alpha)$  may represent a terminal reward and $f(\alpha_s,X_s^\alpha)$ an instantaneous reward process. 
Formally, for all initial time $t$ in $[0,T]$ and initial state $y$, the associated value function is defined by
\begin{equation}\label{linearvt}
u(t,y) = \sup_{\alpha \in \mathcal{A}}\mathbb{E}[\int_t^T f(\alpha_s,X_s^\alpha)ds+g(X_T^\alpha)\,\mid \, 
X_t^\alpha = y
]. 
\end{equation}
The dynamic programming principle  can formally   be stated as
\begin{equation} \label{linearddp}
u(0,x)= \sup_{\alpha \in \mathcal{A}}\mathbb{E}[\int_0^t f(\alpha_s,X_s^\alpha)ds+u(t, X_t^\alpha)], 
\quad {\rm for }  \; t \; {\rm in  } \; [0, T]. 
\end{equation}
This principle is classically  established 
under  assumptions which ensure that the value function $u$ satisfies some regularity properties.  From this principle, it can be derived that the value function is a  viscosity solution of 
the associated Hamilton-Jacobi-Bellman (HJB) equation. Similar results are obtained for optimal stopping 
and mixed optimal stopping/control problems.

The case of a discontinuous value function and its links with viscosity solutions has been studied  for 
 deterministic control in the eighties.   Barles and Perthame (1987)  study in \cite{BP} a deterministic optimal stopping problem with a reward map $g$ only supposed to be Borelian. To this purpose, they  introduce a notion of viscosity 
 solution which extends the classical one to the discontinuous case: a function $v$ is said to be a {\em weak} viscosity solution 
 of the  HJB equation if its upper semicontinuous (u.s.c.) envelope, denoted by $v^*$,  is a {\em viscosity sub-solution} of this PDE, and if its the lower semicontinuous (l.s.c.) envelope, denoted by $v_*$, is a {\em viscosity super-solution} of this equation.
 Then, by the classical dynamic programming principle provided in the previous literature, they get  that the u.s.c. envelope $u^*$ of the value function satisfies a  {\em sub-optimality principle} 
 in the sense of Lions and Souganidis (1985) in \cite{LS}. Using this sub-optimality principle, they then 
show that
  $u^*$ is a  {\em viscosity sub-solution} of the  HJB equation. Moreover, using the fact that the l.s.c  envelop $u_*$ of $u$ 
 is the value function of a relaxed problem, they show that 
  $u_*$ is a  {\em  viscosity super-solution}, and thus get that $u$ is a  {\em weak} viscosity solution of the HJB equation. 
  They stress that in general, the {\em weak} viscosity solution of this PDE  is not unique. However, under a regularity assumption on the reward $g$, 
   by using the control formulae, they obtain that the u.s.c. envelope $u^*$ of the value function is the unique u.s.c. viscosity solution of the HJB equation
(see Remark \ref{caracfaible} for additional references and comments).

More recently, in a stochastic framework, Bouchard and Touzi (2011) (see  \cite{BT}) have proven a  {\em weak} dynamic programming  principle (DPP)  when 
the terminal map $g$ is irregular: they prove  that  the value function $u$ satisfies  a sub-optimality principle of dynamic programming involving its u.s.c. envelope $u^*$, 
and under an additional regularity (lower semi continuity) assumption of the reward $g$, they obtain 
a super-optimality principle involving the l.s.c. envelope $u_*$.
Then, using the sub-optimality principle, they derive that $u^*$ is a viscosity subsolution of the associated HJB equation. Moreover, when $g$ is l.s.c.\,, using  the super-optimality principle, they 
show that $u_*$ is a viscosity super-solution, and thus get that $u$ is a {\em weak} viscosity solution of this PDE in the same sense as above (or  \cite{BP}).
  A {\em weak} dynamic programming  principle has been further  established, when $g$ is {\em l.s.c.} for problems 
  with state constraints by Bouchard and Nutz (2012) in  \cite{BN},  and, when $g$ is {\em continuous}, for zero-sum stochastic  games  by Bayraktar and Yao (2013) in \cite{BY}. 

In this paper we  are interested  in  generalizing these results to the case when $g$ is only Borelian and 
when the linear expectation $\mathbb{E}$ is replaced by a nonlinear expectation induced by a Backward Stochastic Differential Equation (BSDE) with jumps. 
Typically, such  problems in the Markovian case can be formulated as
 \begin{equation} \label{eqintroa}
 \sup_{\alpha \in \mathcal{A}} \mathcal{E}_{0, T}^\alpha[g(X_T^{\alpha})],
\end{equation}
where  $\mathcal{E}^\alpha$ is the nonlinear expectation 
  associated with  a BSDE with jumps with  controlled driver $f(\alpha_t,X_t^\alpha,y,z,k)$. 
Note that  Problem \eqref{linear} is  a particular case of \eqref{eqintroa} when the driver $f$ does not depend on the solution of the BSDE, that is when $f(\alpha_t,X_t^\alpha,y,z,k) \equiv f(\alpha_t,X_t^\alpha)$.

We first provide a {\em weak} dynamic programming principle involving the u.s.c. and l.s.c. envelopes of the value function. To this purpose, we prove some preliminary results, in particular some measurability and
``splitting" properties. 
No regularity condition on $g$ is required to obtain the sub and super-optimality principles, which is not the case in the previous literature in the stochastic case, even with a classical expectation 
(see \cite{BT}, \cite{BN} and \cite{BY}). Using this {\em weak} DPP, 
we then show that the value function, 
 which is generally neither u.s.c. nor l.s.c.\,, is a  {\em weak} viscosity solution (in the sense of \cite{BP})
of an associated nonlinear  HJB equation.

Moreover,  in this paper, we consider the  combined problem  when there is an
 additional control in the form of a stopping time. We thus consider 
mixed generalized optimal  control/stopping problems of the form
\begin{align}\label{eqintro}
\sup_{\alpha \in \mathcal{A}} \sup_{\tau \in \mathcal{T}}  \mathcal{E}_{0, \tau}^\alpha[\bar{h}(\tau, X_\tau^{\alpha})],
\end{align}
where $\mathcal{T}$ denotes the set of stopping times with values in $[0,T]$, and $\bar{h}$ is an irregular reward function. 

Note that in the literature on BSDEs, some papers (see e.g. Peng (1992) \cite{Peng92}, Li -Peng (2009) \cite{LP}, Buckdahn and Li (2008) \cite{Buckdahn} and Buckdahn and Nie (2014) \cite{Buckdahn1}) 
study stochastic control problems with nonlinear $ \mathcal{E}$-expectation in the 
{\em continuous} case (without optimal  stopping). Their approach is different from ours and relies on  the continuity assumption of the reward function.
 The paper is organized as follows: in Section \ref{sec2}, we formulate our generalized mixed  control-optimal stopping problem. 
Using results on reflected BSDEs (RBSDEs), we express this problem as an optimal  control problem for RBSDEs. 
 In Section \ref{section3}, 
 we prove a  {\em weak} dynamic programming principle for our mixed problem with ${\cal E}^f$-expectation.
 This  requires some specific techniques of stochastic analysis and BSDEs to handle measurability  and other issues   due to the nonlinearity of the expectation and the lack of regularity of the terminal reward. 
 Using the dynamic programming principle and properties of  RBSDEs, we prove  in Section  \ref{sec4} that the value function of our mixed problem is a {\em weak}  viscosity solution of 
a nonlinear  HJB variational inequality.  
In the Appendix, we give several fine measurability properties which are used in the paper. 

\section{Formulation of the mixed stopping/control problem}\label{sec2}

We consider the product space $\Omega:=\Omega_W \otimes \Omega_N$, where $\Omega_W:= {\cal C} ([0,T])$ is the  Wiener space, that is the set of continuous functions $\omega^1$ from $[0,T]$ into $\mathbb{R}^p$ such that $\omega^1(0) = 0$, and $\Omega_N := \mathbb{D}([0,T])$ is the Skorohod space of right-continuous with left limits (RCLL)  functions $\omega^2$ from $[0,T]$ into  $\mathbb{R}^d$,  such that 
 $\omega^2(0) = 0$.   
Recall that $\Omega$ is a Polish space for the topology of Skorohod. 
Here  $p, d \geq 1$, but, for notational simplicity, we shall consider only $\mathbb{R}$-valued functions, that is the case $p=d=1$. 

Let $B= (B^1, B^2)$ be the canonical process defined for each $t \in [0,T]$ and each $\omega= (\omega^1, \omega^2)$ by $B^i_t(\omega)= B^i_t(\omega^i):=\omega^i_t$, for $i=1,2$.   
Let us denote the first coordinate process $B^1$ by $W$. 
Let $P^W$ be the probability measure on  $(\Omega_W,\mathcal{B}(\Omega_W))$ such that $W$ is a Brownian motion. Here $\mathcal{B}(\Omega_W)$ denotes the Borelian $\sigma$-algebra on $\Omega_W$.

Set ${\bf E}:= \mathbb{R}^n \backslash \{0\}$ equipped with its Borelian $\sigma$-algebra $\mathcal{B}(\bf{E})$, where $n \geq 1$. 
We  define 
 the jump random measure $N$ as follows: for each $t>0$ and each ${\bf B}$ $ \in \mathcal{B}(\bf{E})$, 
\begin{equation}\label{measure}
N(., [0,t] \times {\bf B}):= \sum_{0< s \leq t} \textbf{1}_{\{\Delta B_s^2 \in {\bf B} \}}.
\end{equation}
The measurable set $({\bf E}, \mathcal{B}(\bf{E}))$ is equipped with a $\sigma$-finite positive measure $\nu$ such that  $\int_{\bf E} (1 \wedge |e| ) \nu(de) < \infty$.
Let $P^N$ be the probability measure on  $(\Omega_N,\mathcal{B}(\Omega_N))$ 
 such that $N$ is a Poisson random measure with compensator $\nu(de)dt$ and such that 
 $B_t^2= \sum_{0< s \leq t} \Delta B_s^2 $ a.s. Note that the sum of  jumps  is well defined up to a $P^N$-null set.
  We set  $\Tilde{N}(dr,de) := {N}(dr,de) - \nu(de)dt$. 
  The space $\Omega$ is equipped with the $\sigma$-algebra $\mathcal{B}(\Omega)$ and the probability measure $P:= P^W \otimes P^N$.
  Let  $\mathbb{F}:=(\mathcal{F}_t)_{t \geq 0}$ be  the filtration generated by $W$ and $N$ {\em completed with respect to $\mathcal{B}(\Omega)$ and $P$}, defined as follows (see \cite{J} p.3 or \cite{DM1} IV): 
let  ${\cal F}$ be the {\em completion} $\sigma$-algebra of $\mathcal{B}(\Omega)$ with respect to $P$ \footnote{For the definition of the 
{\em completion} of a 
$\sigma$-algebra and the one of  {\em $P$-null sets}, see e.g. Lemma \ref{Cra}.} For each $t \in [0,T]$,
 ${\cal F} _t$ is the $\sigma$-algebra generated by $W_s, N_s, s\leq t$ and the {\em $P$-null sets}. Note that ${\cal F}_T= {\cal F}$ and ${\cal F}_0$ is the $\sigma$-algebra generated by the {\em $P$-null sets}.
  Let $\mathcal{P}$ be the predictable $\sigma$-algebra on $\Omega \times [0,T]$ associated with the filtration $\mathbb{F}$.

 Let $T>0$ be fixed. Let $\mathbb{H}^{2}_T$ (denoted also by $\mathbb{H}^{2}$) be the set of real-valued predictable processes ($Z_t$) such that $\mathbb{E}\int_0^T Z_s^{2}ds < \infty$ and let
$\mathcal{S}^2$ be the set of real-valued RCLL adapted processes $(\varphi_s)$  with $\mathbb{E}[\sup_{0\leq s \leq T}\varphi_s^2] < \infty. $
Let ${L}_{\nu}^2$ be the set of measurable functions $l:({\bf E},\mathcal{B}(\bf{E})) \rightarrow (\mathbb{R}, {\cal B} (\mathbb{R}))$  such that  
$\| l \|_\nu^2 :=\int_{{\bf E}}l^2(e)\nu(de)< \infty.$
The set  ${L}^2_\nu$ is a 
 Hilbert space equipped with the scalar product 
$\langle l    , \, l' \rangle_\nu := \int_{{\bf E}} l(e) l'(e) \nu(de)$ for all $ l , \, l' \in {L}^2_\nu \times {L}^2_\nu.$ 
Let 
$\mathbb{H}^{2}_{\nu}$ denote the set of predictable real-valued processes $(k_t(\cdot))$ with $\mathbb{E}\int_0^T \|k_s\|_{{L}_{\nu}^2}^2ds < \infty$.

 Let   $\mathcal{A}$ be the set of controls,  defined as the set of predictable processes $\alpha$  valued in a compact subset $\bf A$ of $\mathbb{R}^p$, where $p \in 
 \mathbb{N}^*$.
For each $\alpha \in \mathcal{A}$
 and each initial condition $x$ in $\mathbb{R}$,  let $( X_s^{\alpha,x})_{ 0 \leq s \leq T}$ be the unique $\mathbb{R}$-valued solution in $\mathcal{S}^2$ of the stochastic differential equation (SDE):
\begin{equation}\label{richesse}
X_s^{\alpha,x}=\dd x+ \int_0^s b(X_r^{\alpha,x}, \alpha_r)dr+\int_0^s \sigma(X_r^{\alpha,x}, \alpha_r)dW_r+\int_0^s \int_{{\bf E}} \beta(X_{r^{-}}^{\alpha,x}, \alpha_r,e) \Tilde{N}(dr,de),
\end{equation}
where  $b, \ \sigma :\mathbb{R} \times {\bf A} \rightarrow \mathbb{R}$, 
are  Lipschitz continuous with respect to $x$ and $\alpha$,   and $\beta : \mathbb{R} \times {\bf A} \times {\bf E} \rightarrow \mathbb{R}$ is  a bounded measurable function such that for some constant $C \geq 0$, and for all $e \in \textbf{E}$
\begin{align*}
&|\beta(x,\alpha,e)| \leq C\,\Psi(e),
  \;\; x \in \mathbb{R}, \alpha \in {\bf A} \quad {\rm where} \quad  \Psi \in   {L}^2_\nu .\\
&|\beta(x,\alpha,e)- \beta(x',\alpha',e)| \leq C(|x-x'| + |\alpha - \alpha'|)
\Psi(e), 
\;\;  x, x'\in \mathbb{R}, \alpha, \alpha' \in {\bf A}.
\end{align*}

 The criterion of our mixed control problem, depending on $\alpha$, is defined via  a BSDE with driver function  $f$ satisfying the following hypothesis:
\begin{assumption} \label{H1}
%
%
$f: {\bf A} \times [0,T] \times  \mathbb{R}^3 \times  {L}_{\nu}^2  \rightarrow (\mathbb{R}, \mathcal{B}(\mathbb{R}))$ is  $ 
\mathcal{B}({\bf A}) \otimes \mathcal{B}([0,T]) \otimes \mathcal{B}(\mathbb{R}^3) \otimes \mathcal{B}({L}_{\nu}^2)$-measurable
and satisfies
\begin{itemize}
\item[(i)]
 $|f(\alpha,t,x,0,0,0)| \leq C(1+|x|^p), \forall \alpha \in {\bf A}, t \in [0,T],   x\in\mathbb{R}$, where  $p \in 
 \mathbb{N}^*$.

\item[(ii)]
$ |f(\alpha,t,x,y,z,k)- f (\alpha',t,x',y',z',k')| \leq C(  |\alpha -\alpha '|+ |x-x'|+|y-y'|+|z-z'|+\|k-k'\|_{{L}_{\nu}^2})$, $\forall t \in [0,T]$, $x, x', y, y', z,z' \in \mathbb{R}$, $k,k' \in {L}_{\nu}^2, \alpha, \alpha' \in {\bf A}.$

\item[(iii)]
$ f(\alpha,t,x,y,z,k_2)- f (\alpha,t,x,y,z,k_1) \geq <\gamma(\alpha,t,x,y,z,k_1,k_2),k_2-k_1>_{\nu}, \forall t,x,y,z,k_1,k_2, \alpha,$
\end{itemize}
where $\gamma: {\bf A} \times [0,T] \times  \mathbb{R}^3 \times  ({L}_{\nu}^2)^2  \rightarrow ({L}_{\nu}^2, \mathcal{B}({L}_{\nu}^2))$ is $ 
\mathcal{B}({\bf A}) \otimes \mathcal{B}([0,T]) \otimes \mathcal{B}(\mathbb{R}^3) \otimes \mathcal{B}(({L}_{\nu}^2)^2)$-measurable,  \\
satisfying $\gamma(.)(e)\geq -1$  and $ |\gamma(.)(e)| \leq  \Psi (e)\,$ $d\nu(e)$-a.s.\,, where  $\Psi \in   {L}^2_\nu$.
\end{assumption}

For all $x \in  \mathbb{R}$ and all control $\alpha \in \mathcal{A}$, let $f^{\alpha,x}$ be the driver defined by 
$$f^{\alpha,x}(r, \omega ,y,z,k):= f(\alpha_r(\omega),r, X_r^{\alpha,x}(\omega),y,z,k).$$

We introduce   the nonlinear  expectation $\mathcal{E}^{f^{\alpha,x}}$ 
(denoted more simply by $\mathcal{E}^{\alpha, x}$) associated with $f^{\alpha,x}$, defined for each stopping time $\tau$ and for each  $\eta \in {L}^2(\mathcal{F}_\tau)$ as:
$$\mathcal{E}^{\alpha,x}_{r,\tau}[\eta]:=\mathcal{X}_r^{\alpha,x},  \;\; 0 \leq r \leq \tau,$$
where $(\mathcal{X}_r^{\alpha,x})$ is the solution in $\mathcal{S}^2$ of the BSDE associated with driver $f^{\alpha,x}$, terminal time $\tau$ and terminal condition $\eta$, that is satisfying:
\begin{align*}
-d\mathcal{X}_r^{\alpha,x}=f(\alpha_r,r,X_r^{\alpha,x},\mathcal{X}_r^{\alpha,x},Z_r^{\alpha,x},K_r^{\alpha,x}(\cdot))dr-Z_r^{\alpha,x}dW_r-\int_{{\bf E}}K_r^{\alpha,x}(e)\Tilde{N}(dr,de); \quad
\mathcal{X}_S^{\alpha,x}=\eta, 
\end{align*}
and $(Z_s^{\alpha,x})$, $(K_s^{\alpha,x})$ are the associated processes, which belong respectively to $\mathbb{H}^{2}$ and  $\mathbb{H}^{2}_{\nu}$. 
Condition (iii) ensures the non decreasing property of the $\mathcal{E}^{f^{\alpha,x}}$-expectation
 (see \cite{16}).

%

For all $x \in  \mathbb{R}$ and all control $\alpha \in \mathcal{A}$, we define the reward  by
$h(s,X_s^{\alpha,x})$ for $0 \leq s < T$ and $g(  X_T^{\alpha,x})$ for $s=T$, where 

%
\begin{itemize}
\item[$\bullet$]
$g :  \mathbb{R} \rightarrow \mathbb{R}$ is Borelian.

%
%
%
%
%
\item[$\bullet$]
 $h :[0,T] \times \mathbb{R} \rightarrow \mathbb{R}$ is a function which
is Lipschitz continuous with respect to $x$ uniformly in $t$, and continuous with respect to $t$ on $[0,T]$.
\item[$\bullet$]
 $|h(t,x)| +  |g(x)| \leq C(1+ |x|^p), \forall t \in [0,T], x \in \mathbb{R},  \text{ with $p\in \mathbb{N}^*$}. $
\end{itemize}

 Let $\mathcal{T}$ be the set of stopping times with values in $[0,T]$. 
Suppose the initial time is equal to $0$. Note that $ \mathcal{E}_{0, \tau}^{\alpha,t,x} 
  [ \bar h(\tau, X_{\tau}^{\alpha,t,x})]$ can be taken as constant. 
  \footnote{
Indeed, the solution of a BSDE with Lipschitz driver is unique up to a $P$-null set. Its initial value may thus be taken  constant for all $\omega$, modulo a change of its value on a $P$-null set, because ${\cal F}_0$ is the $\sigma$-algebra generated by the $P$-null sets.}
For each initial condition $x \in \mathbb{R}$, we consider the  mixed optimal control/stopping problem:
\begin{equation}\label{probform}
u(0,x):= \sup_{\alpha \in \mathcal{A}} \sup_{\tau \in \mathcal{T}} \mathcal{E}_{0, \tau}^{\alpha,x}[\bar h(\tau, X_{\tau}^{\alpha,x})],
\end{equation}
where  $$\bar h (t,x): = h (t,x){\bf 1}_{t < T} + g(x){\bf 1}_{t = T}.$$ Note that $\bar h$ is Borelian but not necessarily regular in $(t,x)$.
 
We now make the problem dynamic. We  define, for $t \in [0,T]$ and each $\omega$ $\in$ $\Omega$ the $t$-translated path  $\omega^t = (\omega^t_s)_{s \geq t}:=(\omega_s-\omega_t)_{s \geq t}$.
Note that $(\omega^{1,t}_s)_{s \geq t}:=(\omega_s^1-\omega_t^1)_{s \geq t}$  corresponds to the realizations of the translated Brownian motion $W^t:= (W_s-W_t)_{s \geq t}$ and that the translated Poisson random measure $N ^t :=
N(]t,s],.)_{s \geq t}$ can be expressed in terms of $(\omega^{2,t}_s)_{s \geq t}:=(\omega_s^2-\omega_t^2)_{s\geq t}$
 similarly to \eqref{measure}.  Let $\mathbb{F}^t=(\mathcal{F}_s^t)_{t \leq s \leq T}$ be the  filtration 
generated by $W^t$ and $N ^t$ {\em completed with respect to ${\cal B}(\Omega)$ and $P$}. 
Note that for each $s\in [t,T]$, $\mathcal{F}_s^t$ is the $\sigma$-algebra generated by $W_r^t$, $N_r ^t$, $t\leq r\leq s$  and $\mathcal{F}_0$. Recall also that  we have a martingale representation theorem for 
  ${\mathbb F}^t$-martingales as stochastic integrals with respect to $ W^t$ and $\Tilde{N}^t$.
 
Let us denote by $\mathcal{T}^t_{t}$ the set of stopping times with respect to $\mathbb{F}^t$ with values in $[t,T]$.  Let   $\mathcal{P}^t$ be the predictable $\sigma$-algebra on 
$\Omega \times [t,T]$ equipped with the filtration $\mathbb{F}^t$.\\
We now introduce the following spaces of processes. Let $t \in [0,T]$.
 Let $\mathbb{H}_t^2$ be  the ${\cal P}^t$-measurable processes $Z$  on $\Omega \times [t,T]$ such that 
 $\| Z \|_{\mathbb{H}_t^2} := \mathbb{E}[\int_t^T Z_u^2du] < \infty.$ We define  
 $\mathbb{H}_{t, \nu}^2$ as  the set of ${\cal P}^t$-measurable processes $K$  on $\Omega \times [t,T]$ such that 
 $\|K \|_{\mathbb{H}_{t, \nu}^2} := \mathbb{E}[\int_t^T  ||K_u||_{\nu}^2du] < \infty.$
 We denote by ${\cal S}^2_t$ the set of real-valued RCLL processes $\varphi$ on $\Omega \times [t,T]$, 
 ${\mathbb F}^t$-adapted, with $\mathbb{E}[\sup_{t\leq s \leq T} \varphi_s^2] < \infty. $

 Let   $\mathcal{A}^t_t$ be the set of controls $\alpha:\Omega \times [t, T] \mapsto {\bf A}$, which 
 are ${\cal P}^t$-measurable. We consider the solution denoted by $X^{\alpha,t,x}$ in ${\cal S}^2_t $ 
 of the following SDE driven by the translated Brownian motion $W^t$ and the translated Poisson random measure $N^t$ (with filtration $\mathbb{F}^t$) : 
 \begin{equation}\label{richessebis}
X_s^{\alpha,t,x}=\dd x+ \int_t^s b(X_r^{\alpha,t,x}, \alpha_r)dr+\int_t^s \sigma(X_r^{\alpha,t,x}, \alpha_r)dW^t_r+\int_t^s \int_{{\bf E}} \beta(X_{r^{-}}^{\alpha,t,x}, \alpha_r,e) \Tilde{N}^t(dr,de).
\end{equation}
For all $(t,x) \in [0,T] \times \mathbb{R}$ and all control $\alpha \in \mathcal{A}_t ^t$, let $f^{\alpha,t,x}$ be the driver defined by 
$$f^{\alpha,t,x}(r, \omega ,y,z,k):= f(\alpha_r(\omega),r, X_r^{\alpha,t,x}(\omega),y,z,k).$$
   Let  $\mathcal{E}_{., \tau}^{\alpha,t,x}  [ {\bar h}(\tau, X_{\tau}^{\alpha,t,x})]$ (denoted also by $ \mathcal{X}_\cdot^{\alpha,t,x}$)
 be  the solution in ${\cal S}^2_t $ of the BSDE with driver $f^{\alpha,t,x}$, terminal time $\tau$ and terminal condition $\bar h(\tau, X_{\tau}^{\alpha,t,x})$, driven by $W^t$ and $N^t$, which is solved on $[t,T]\times \Omega$ with respect to the filtration $\mathbb{F}^t$: 
\begin{align}\label{numero}
\begin{cases}
-d\mathcal{X}_r^{\alpha,t,x}=f(\alpha_r,r,X_r^{\alpha,t,x},\mathcal{X}_r^{\alpha,t,x},Z_r^{\alpha,t,x},K_r^{\alpha,t,x})dr-Z_r^{\alpha,t,x}dW^t_r-\int_{{\bf E}}K_r^{\alpha,t,x}(e)\Tilde{N}^t(dr,de)\\
\mathcal{X}_\tau^{\alpha,t,x}=\bar h(\tau, X_{\tau}^{\alpha,t,x}),
\end{cases}
\end{align}
where $Z_\cdot^{\alpha,t,x}$, $K_\cdot^{\alpha,t,x}$ are the associated processes, which belong respectively to $\mathbb{H}_t^{2}$ and  $\mathbb{H}^{2}_{t,\nu}$. Note that $ \mathcal{E}_{t, \tau}^{\alpha,t,x} 
  [ \bar h(\tau, X_{\tau}^{\alpha,t,x})]$ can be taken deterministic modulo a change of its value on a $P$-null set. 
\footnote{Indeed,  the solution $ \mathcal{E}_{., \tau}^{\alpha,t,x} 
  [ \bar h(\tau, X_{\tau}^{\alpha,t,x})] (= \mathcal{X}_.^{\alpha,t,x})$ of the BSDE \eqref{numero} is unique up to a $P$-{\em null set}. Moreover, its value at time $t$ is ${\cal F}^t_t$-measurable, and ${\cal F}^t_t$ is equal to  the $\sigma$-algebra generated by the $P$-{\em null sets} (that is ${\cal F}_0$). The same property holds for the solution of the reflected BSDE \eqref{4.4}.
 See also the additional remarks  \ref{tb} and \ref{remarque}.
}

For each initial time $t$ and each initial condition $x$, we define the value function as
\begin{equation}\label{probform1}
u(t,x):= \sup_{\alpha \in \mathcal{A}^t_t} \sup_{\tau \in \mathcal{T}^t_{t}}  \mathcal{E}_{t, \tau}^{\alpha,t,x} 
  [ \bar h(\tau, X_{\tau}^{\alpha,t,x})],  
\end{equation}
which is  a  deterministic function of $t$ and $x$. 

For each $\alpha \in  \mathcal{A}^t_t$, we introduce the function $u^{\alpha}$ defined as 
 $$u^{\alpha}(t,x):=\sup_{\tau\in \mathcal{T}^t_{t}} \mathcal{E}_{t, \tau}^{\alpha,t,x}
   [\bar h(\tau, X_{\tau}^{\alpha,t,x})].$$
 We thus get 
 \begin{equation}\label{eg}
 u(t,x) =\sup_{\alpha \in \mathcal{A}^t_t} u^{\alpha}(t,x).
 \end{equation}
 For each $\alpha$, $u^{\alpha}(t,x) \geq \bar h(t,x)$,  and hence $u(t,x) \geq \bar h(t,x)$. 
 Moreover, $u^{\alpha}(T,x) = u(T,x)= g(x)$. 
 
By Theorem 3.2 in $\cite{17}$, for each $\alpha$, the value function $u^\alpha$ is related to a reflected BSDE. 
More precisely, let $(Y^{\alpha,t,x},  Z^{\alpha,t,x},   K^{\alpha,t,x})  \in \mathcal{S}_t^2 \times \mathbb{H}_t^2 \times
\mathbb{H}^2_{\nu, t}$ be the solution of the reflected BSDE
 associated 
with driver $f^{\alpha,t,x}:= f(\alpha_\cdot,\cdot,X_\cdot^{\alpha,t,x},y,z,k)$,  (RCLL) obstacle process $\xi^{\alpha,t,x}_s:= {\bar h}(s, X_{s}^{\alpha,t,x})_{t \leq s \leq T}$, terminal condition 
$g(X^{\alpha,t,x}_T)$, and with filtration $\mathbb{F}^t$, that is
\begin{equation}\label{4.4}
\begin{cases}
\!-dY^{\alpha,t,x}_r \!=f(\alpha_r,r,X^{\alpha,t,x}_r,Y^{\alpha,t,x}_r,Z^{\alpha,t,x}_r, K^{\alpha,t,x}_r)dr+ dA^{\alpha,t,x}_s- Z^{\alpha,t,x}_rdW^t_r-\int_{{\bf E}}K^{\alpha,t,x}(r,e) \Tilde{N}^t(dr,de),\\

\,\,Y^{\alpha,t,x}_T\,=\,g(X^{\alpha,t,x}_T)\,\text{ and } \,Y^{\alpha,t,x}_s \geq \, \,\xi^{\alpha,t,x}_s= h(s, X_{s}^{\alpha,t,x}), \,\,0 \leq s < T\; \text{ a.s. },\\
A^{\alpha,t,x} \text{ is a RCLL nondecreasing } \,{\cal P}^t\,  \text{-measurable process with } A^{\alpha,t,x}_t=0 \text{ and such that }\\
\int_0^T (Y^{\alpha,t,x}_s - \xi^{\alpha,t,x}_s) dA^{\alpha,t,x,c}_s = 0 \text{ a.s. and } \;  \Delta A^{\alpha,t,x,d}_{s}= - \Delta A^{\alpha,t,x}_s \,{\bf 1}_{\{Y^{\alpha,t,x}_{s^-} = \xi^{\alpha,t,x}_{s^-}\}}  \quad   \rm{a.s.} 
\end{cases}
\end{equation}
Here  $A^{\alpha,t,x,c}$ denotes the continuous part of $A$ and $A^{\alpha,t,x,d}$  its discontinuous part.
In the particular case when $h(T,x) \leq g(x)$, then the obstacle $\xi^{\alpha,t,x}$ satisfies for all $\mathbb{F}^t$-predictable stopping time $\tau$, $\xi_{\tau^-} \leq \xi_{\tau}$ a.s. 
which implies the continuity 
of the process $A^{\alpha,t,x}$ (see \cite{17}).


In the following, for each $\alpha \in \mathcal{A}_t^t$,  
$Y^{\alpha,t,x}_\cdot$ will be also denoted by  ${Y}_{\cdot,T}^{\alpha,t,x}[g(X^{\alpha,t,x}_T)]$. Note that  its value at time $t$ can be taken as deterministic   modulo a change of its value on a $P$-null set.

Using Theorem 3.2 in $\cite{17}$, we  get that for each $\alpha \in \mathcal{A}_t^t$,
\begin{equation}\label{caractb}
u^\alpha(t,x)=Y_t^{\alpha,t,x}=  {Y}_{t,T}^{\alpha,t,x}[g(X^{\alpha,t,x}_T)].
\end{equation}


\noindent By using these equalities, we can reduce our mixed optimal stopping/control problem \eqref{probform1} to an optimal  control problem for reflected BSDEs: 
\begin{theorem}[Characterization of the value function]\label{representation}
For each $(t,x) \in [0,T] \times {\mathbb R}$, the value function $u(t,x)$ of the mixed optimal stopping/ control problem  \eqref{probform1} satisfies
\begin{equation}\label{representationbis}
u(t,x) = \sup_{\alpha \in \mathcal{A}^t_t} u^\alpha(t,x)= \sup_{\alpha \in \mathcal{A}^t_t} {Y}_{t,T}^{\alpha,t,x}[g(X^{\alpha,t,x}_T)].
\end{equation}
\end{theorem}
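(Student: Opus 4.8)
The plan is to prove the two equalities in \eqref{representationbis} separately, combining a purely formal decomposition of the double supremum with the reflected-BSDE characterization of the $\mathcal{E}^f$-optimal stopping value. The first equality is nothing but \eqref{eg}: starting from the definition \eqref{probform1}, I would write the double supremum $\sup_{\alpha \in \mathcal{A}^t_t}\sup_{\tau \in \mathcal{T}^t_t}$ as an iterated supremum and recognize that the inner supremum over $\tau \in \mathcal{T}^t_t$ of $\mathcal{E}^{\alpha,t,x}_{t,\tau}[\bar h(\tau, X_\tau^{\alpha,t,x})]$ is exactly $u^\alpha(t,x)$ by definition, so that $u(t,x) = \sup_{\alpha} u^\alpha(t,x)$. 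This step needs no further argument.

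The substance lies in the second equality, i.e. in \eqref{caractb}: for each fixed $\alpha \in \mathcal{A}^t_t$, the $\mathcal{E}^{\alpha,t,x}$-optimal stopping value $u^\alpha(t,x) = \sup_{\tau}\mathcal{E}^{\alpha,t,x}_{t,\tau}[\bar h(\tau, X_\tau^{\alpha,t,x})]$ coincides with the value at time $t$ of the reflected BSDE \eqref{4.4}, namely $u^\alpha(t,x) = Y^{\alpha,t,x}_t = Y^{\alpha,t,x}_{t,T}[g(X_T^{\alpha,t,x})]$. To obtain this I would apply Theorem 3.2 of \cite{17} to the data $(f^{\alpha,t,x}, \xi^{\alpha,t,x}, g(X_T^{\alpha,t,x}))$, with obstacle $\xi^{\alpha,t,x}_s = h(s, X_s^{\alpha,t,x})$ on $[t,T)$ and terminal condition $g(X_T^{\alpha,t,x})$. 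Before invoking that theorem I must check its hypotheses: first, that the driver $f^{\alpha,t,x}$ inherits from Assumption \ref{H1} the Lipschitz property in $(y,z,k)$, the required measurability, and the monotonicity condition (iii), so that the $\mathcal{E}^{\alpha,t,x}$-expectation is well defined and nondecreasing; second, that the obstacle $\xi^{\alpha,t,x}$ is an RCLL (in fact continuous on $[t,T)$) $\mathbb{F}^t$-adapted process lying in $\mathcal{S}^2_t$, and that $g(X_T^{\alpha,t,x}) \in L^2(\mathcal{F}_T)$. Both integrability claims follow from the polynomial growth of $h$ and $g$ combined with the moment estimate $\mathbb{E}[\sup_{t \le s \le T}|X_s^{\alpha,t,x}|^{2p}] < \infty$ for the SDE \eqref{richessebis} (valid under the Lipschitz/linear-growth assumptions on $b,\sigma$ and the bound $|\beta| \le C\Psi$), while the RCLL/continuity property follows from the continuity of $h$ in $t$, its Lipschitz continuity in $x$, and the RCLL paths of $X^{\alpha,t,x}$.

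The main obstacle is the verification just described, and especially the treatment of the irregularity of $\bar h$ at the terminal time. Since $\bar h$ jumps from $h(T,\cdot)$ to $g(\cdot)$ at $s=T$, the reflection in \eqref{4.4} is against $\xi^{\alpha,t,x}_s = h(s,X_s^{\alpha,t,x})$ only on $[t,T)$ whereas the terminal value is $g(X_T^{\alpha,t,x})$; I must ensure that the framework of \cite{17} accommodates this discontinuity, which is precisely why the nondecreasing process $A^{\alpha,t,x}$ may carry a jump at $T$ in general and stays continuous under the extra condition $h(T,\cdot) \le g(\cdot)$. Once the admissibility of the triple is confirmed, \eqref{caractb} is exactly the conclusion of Theorem 3.2 of \cite{17}, and composing it with the first equality yields \eqref{representationbis}. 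I would close by noting that all initial values at time $t$ may be taken deterministic modulo $P$-null sets, since $\mathcal{F}^t_t = \mathcal{F}_0$ is generated by the $P$-null sets, which is what makes $u(t,x)$ a genuine deterministic function of $(t,x)$.
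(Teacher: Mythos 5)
Your proposal is correct and follows essentially the same route as the paper: the first equality is just the iterated-supremum identity \eqref{eg}, and the second is the reflected-BSDE characterization \eqref{caractb} obtained by applying Theorem 3.2 of \cite{17} to the driver $f^{\alpha,t,x}$, the obstacle $\bar h(s,X_s^{\alpha,t,x})$ and terminal condition $g(X_T^{\alpha,t,x})$. Your verification of the hypotheses (Lipschitz/monotonicity of the driver, square-integrability via polynomial growth and SDE moment estimates, RCLL obstacle with the possible jump of $A^{\alpha,t,x}$ at $T$) is in fact more detailed than what the paper writes out, but it is the same argument.
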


This key property will be used to solve our mixed problem. We point out that in the classical case of linear expectations, this approach allows us to provide alternative proofs of the dynamic programming principle to those given in the previous literature.


%
%
%
\begin{remark} 
Some mixed optimal control/stopping problems with nonlinear expectations have been studied in \cite{BY11, 17}. In these papers,  the reward process does not depend on the control, which yields the characterization of 
 the value function    as the  solution of an RBSDE. This is not the case here. 
%
\end{remark}
\section{Weak Dynamic Programming Principle}\label{section3}
In this section, we  prove a {\em weak} dynamic programming principle for our mixed optimal control/stopping problem \eqref{probform1}. 
To this purpose, we first provide some splitting properties  for the forward-backward system \eqref{richessebis}-\eqref{4.4}. We then show some measurability properties of the function  
$u^{\alpha}(t,x)$, defined by \eqref{caractb}, with respect to both the state variable $x$ and the control $\alpha$. Using these results, 
we show the existence of ${\varepsilon}$-optimal controls satisfying some appropriate measurability properties.
Moreover, we  establish  a Fatou lemma  for RBSDEs, where the limit involves both terminal condition and terminal time. Using these results, we then prove a sub- (resp. super-) optimality principle of dynamic programming, involving the u.s.c. (resp. l.s.c.)  envelope of the value function.
\subsection{Splitting properties}


   Let $s \in [0,T]$.
For each $\omega$, let $^s\omega:= (\omega_{r \wedge s})_{0 \leq r \leq T}$ and 
$\omega^s := (\omega_{r} - \omega_{s})_{s \leq r \leq T}$.\\
We shall identify the path $\omega$ 
with $ (^s\omega, \omega^s),$
which means that a path can be splitted into two parts: the path before time $s$ and the 
$s$-translated path after time $s$. \\
Let $\alpha$ be a given control in $\mathcal{A}$. We show below the following:  at time $s$, for fixed past path  $\tilde \omega:=$$^s  \omega$, the process $\alpha( \tilde \omega, .)$ which only depends on the future path $\omega^s$ is an
 $s$-admissible control, that is 
$\alpha( \tilde \omega, .) \in \mathcal{A}_{s}^{s}$; furthermore,
 the criterium 
 $Y^{\alpha,0,x} (\tilde \omega, .)$ from time $s$ coincides with the solution of the reflected BSDE driven by $ W^s$ and $\Tilde{N}^s$,  controlled by $\alpha( \tilde \omega, .)$
 and associated with initial time $s$ and initial state condition $X_s^{\alpha,0,x}(\tilde \omega)$. 
 
 We introduce the following random variables defined on $\Omega$ by 
$$ S^s: \omega  \mapsto  \,^s\omega \,\,;\quad T^s: \omega \mapsto  \, \omega^s.$$
Note that they are independent. 
For each $ \omega \in \Omega$, we have
$\omega = S^s(\omega) + T^s(\omega) {\bf 1}_{]s,T]}, $
or equivalently
$\omega_r = \omega_{r \wedge s} + \omega_r^s {\bf 1}_{]s,T]}(r), $ fort all $ r \in [0,T].$

%

%

For all paths $\omega, \omega '$ $\in \Omega$, $(^s  \omega,  T^s(\omega '))$ denotes 
the path such that the past trajectory before  $s$ is that of $\omega$, and the $s$-translated trajectory after $s$ is that of $\omega '$. This can also be written as:
$(^s  \omega,  T^s(\omega ')) :=\,\, ^s  \omega + T^s(\omega '){\bf 1}_{]s,T]}.$
Note that for each $\omega$ $\in \Omega$, we have $(^s  \omega, T^s(\omega ))= \omega$. 

\begin{lemma}\label{measurability}
Let $s \in [0,T]$. 
Let $Z \in\mathbb{H}^2$. 
There exists a $P$-null set ${\cal N}$ such that for each $ \omega$ in the complement   ${\cal N}^c$ of ${\cal N}$, 
 setting  $\tilde \omega:= $ $^{s}\omega = \omega_{. \wedge s} $, the process $Z(\tilde \omega, T^s)$ (denoted also by $Z(\tilde \omega, .)$) defined by
$$Z(\tilde \omega, T^s): \Omega  \times [s,T] \rightarrow {\mathbb R} \,; \,(\omega ', r) \mapsto Z_r(\tilde \omega, T^s(\omega '))$$
belongs to  
$\mathbb{H}_s^2$.
Moreover, if $Z \in$ $\mathcal{A}$, then $Z(\tilde \omega, T^s)  \in \mathcal{A}_{s}^{s}.$\\
This property also holds for all initial time $t \in [0,T]$.
More precisely, let $s \in [t,T]$. 
Let $Z \in\mathbb{H}_t^2$ (resp. $\mathcal{A}_{t}^{t}$).
For a.e. $\omega$ $\in$ $\Omega$,  the process 
$Z(^{s}\omega, .) = (Z_r(^{s}\omega, T^s))_{r \geq s}$
belongs to  
$\mathbb{H}_s^2$ (resp. $\mathcal{A}_{s}^{s}$).

\end{lemma}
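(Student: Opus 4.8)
The plan is to exploit the product structure of $\Omega$ induced by the splitting $\omega = ({}^s\omega, \omega^s)$ at time $s$. Since $W$ has independent increments and $N$ is a Poisson random measure, the random variables $S^s$ and $T^s$ are independent under $P$, so that $P$ disintegrates as the product $\mathrm{Law}(S^s)\otimes \mathrm{Law}(T^s)$ of the laws of the stopped and translated paths; moreover, by stationarity of the increments, $T^s$ carries the law making $\mathbb{H}^2_s$ the natural space. I would organize the proof into a measurability part and an integrability part, treating first the case $t=0$ and then transcribing it verbatim to a general initial time.

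For the measurability part, I would first reduce to a process $Z$ measurable with respect to the \emph{uncompleted} predictable $\sigma$-algebra, the completion only affecting a $P\otimes dt$-null set which I discard at the cost of enlarging $\mathcal{N}$. Then I would run a functional monotone class argument: it suffices to verify the claim on the elementary predictable generators $Z_r(\omega) = \mathbf{1}_A(\omega)\mathbf{1}_{(u,v]}(r)$ with $A \in \mathcal{F}_u$. Writing $\mathcal{F}_u$ (for $u \geq s$) as generated by $\mathcal{F}_s$ together with the translated data on $(s,u]$, I would show that after freezing $\tilde\omega = {}^s\omega$ the factor carrying the $\mathcal{F}_s$-dependence becomes a constant in $\omega'$, while the remaining factor is $\mathcal{F}^s_u$-measurable; hence $Z(\tilde\omega, T^s)$ is an elementary $\mathbb{F}^s$-predictable process, i.e. $\mathcal{P}^s$-measurable (the cases $u<s$ and the terminal generators being handled similarly). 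The class of $Z$ for which the frozen process is $\mathcal{P}^s$-measurable is a vector space closed under bounded monotone limits, hence contains all predictable $Z$.

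For the integrability part, I would apply Tonelli's theorem through the disintegration: since $Z \in \mathbb{H}^2$ one has $\mathbb{E}\int_s^T Z_r^2\,dr < \infty$, and pushing $S^s, T^s$ forward to $\mathrm{Law}(S^s)\otimes\mathrm{Law}(T^s)$ rewrites this as
$$\int_\Omega \Big( \mathbb{E}\Big[\int_s^T Z_r(\tilde\omega, T^s)^2\,dr\Big] \Big)\, dP(\tilde\omega) < \infty,$$
so the inner quantity is finite for $\tilde\omega$ outside a $P$-null set; together with the measurability step this gives $Z(\tilde\omega, T^s) \in \mathbb{H}^2_s$, and I would let $\mathcal{N}$ be the union of all the discarded null sets. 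For $Z \in \mathcal{A}$ the integrability is automatic because $\mathbf{A}$ is compact and freezing preserves the values, so the measurability step alone yields $Z(\tilde\omega, T^s) \in \mathcal{A}^s_s$. The general-$t$ statement then follows by repeating the argument with $\mathbb{F}$, $[0,T]$, $\mathcal{P}$ replaced by $\mathbb{F}^t$, $[t,T]$, $\mathcal{P}^t$.

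The main obstacle I anticipate is the bookkeeping around the $P$-completion of the filtration combined with the requirement of producing a \emph{single} null set $\mathcal{N}$ valid for all $\omega \in \mathcal{N}^c$: the clean generator computation holds only for the uncompleted filtration, and one must check that the monotone-class passage to general $Z$, together with the a.e.\ reductions, discards at most countably many null sets, so their union stays $P$-null. The secondary point requiring care is the precise identification of $\mathrm{Law}(T^s)$ and the justification of the product disintegration, so that the resulting integrability condition matches exactly the definition of $\mathbb{H}^2_s$.
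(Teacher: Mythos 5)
Your proposal is correct and takes essentially the same route as the paper: your measurability step (reduction to the uncompleted predictable $\sigma$-algebra, then a generator/monotone-class verification on elementary predictable processes) is precisely the paper's proof that the recombination map $\psi:(\omega',r)\mapsto\left((\tilde\omega,T^s(\omega')),r\right)$ is $({\cal P}^s,{\cal P})$-measurable followed by composition, and your Tonelli argument through the product law of $(S^s,T^s)$ is exactly the paper's conditional-expectation (``freezing'') computation using that $S^s$ is ${\cal F}_s$-measurable and $T^s$ is independent of ${\cal F}_s$. The null-set bookkeeping you flag as the main obstacle is in fact harmless: once $Z$ is replaced by an indistinguishable process that is predictable for the uncompleted filtration, the frozen process is ${\cal P}^s$-measurable for \emph{every} $\tilde\omega$ (no exceptional set), so the only $P$-null sets arise from that single modification and from the integrability step.
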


\dproof 
Classically, we  have 
$\mathbb{E}[ \int_{s}^T Z_r^2 dr ]$$=$ $\mathbb{E}[\mathbb{E}[ \int_{s}^T Z_r^2 dr |  \cal{F} _{s} ]]$ $ < + \infty$. 
\noindent Using the independence of $T^{s}$ with respect to ${\cal F}_{s}$ and the measurability of $ S^{s}$ with respect to ${\cal F}_{s}$, we derive that 
$$\mathbb{E}[ \int_{s}^T Z_r^2 dr | \,  \cal{F}_{s} ]= \mathbb{E}[ \int_{s}^T Z_r (S^s , T^s)^2 dr | \,  \cal{F}_{s} ]= F( \, S^{s})
< + \infty \quad P-{\rm a.s.}\,, $$ where 
$F( \tilde \omega ) := \mathbb{E}[ \int_{s}^T Z_r(\tilde \omega, T^{s} (\cdot))^2 dr] .$

Let us now prove that the process $\,Z (\tilde \omega, T^s)\,: \,(\omega', r) \mapsto Z_r(\tilde \omega, T^s(\omega'))$ is ${\cal P}^s$-measurable.
There exists a process  indistinguishable of $(Z_r)$, still denoted by $(Z_r)$, which is measurable with respect to the predictable $\sigma$-algebra associated with the filtration generated by
 $W$ and $N$ (see \cite{DM1}   IV \textsection 79). We can thus
 suppose in this proof (without loss of generality)  that ${\mathbb F}$ (resp. ${\mathbb F}^{s}$) is the filtration generated by $W$ and $N$ (resp. $W^s$ and $N^s$)
, and $\mathcal{P}$ (resp. $\mathcal{P}^s$) is its associated predictable $\sigma$-algebra.
 Suppose we have shown that the map 
$\psi: \Omega \times [s,T] \rightarrow \Omega \times [0,T]$; $
(\omega', r) \mapsto \left((\tilde \omega, T^s(\omega')),r \right)$
is $({\cal P}^{s}, {\cal P})$-measurable.
Now, we have $Z (\tilde \omega, T^s) (\omega', r)
= Z \circ \psi (\omega', r)$ for each $(\omega', r)$ $\in$ $\Omega \times [s,T]$.
Since 
 $Z$ is $\mathcal{P}$-measurable, by composition, we derive that $\,Z (\tilde \omega, T^s)$ is 
 ${\cal P}^{s}$-measurable.

It remains to show the $({\cal P}^{s}, {\cal P})$-measurability of $\psi$.
 Recall that the $\sigma$-algebra ${\cal P}$ is generated by the sets 
$H \times ]v,T]$, where $v \in [0,T[$ and $H$ is 
 of the form: $H = \{B_{t_i} \in A_i, \,\,\, 1\leq i \leq n
 \},$ where $ A_i \in \mathcal{B}(\mathbb{R}^2)$ and $t_1< t_2 < ... \leq v $. It is thus sufficient to show that 
$\psi ^{-1}(H \times ]v,T])$ $\in$ ${\cal P}^{s}$. 
Note  that 
$\psi ^{-1}(H \times ]v,T])= H' \times ]v,T]$, where 
$H' = \{ \omega' \in \Omega\,,\, (\tilde \omega, T^s(\omega')) \in H\}$. 
If there exists $i$ such that $t_i \leq s$  and $\tilde{\omega}_{t_i} \not \in A_i$, then  $H'=
 \emptyset$. Otherwise,
we have 
$H'=  \{ \omega'_{t_i}- \omega'_s \in A_i, \,\,\, \forall \,\,i \text{ such that } t_i > s  \}$.
Hence $H' \in {\cal F}_v^{s}$, which implies that $\psi ^{-1}(H \times ]v,T])$ $\in$ ${\cal P}^{s}$. The proof is thus 
complete.
\fproof

Let $Z\in \mathbb{H}^2$. Let us give an intermediary time $s \in [0,T]$ and a fixed past path 
${^s} \omega$. Note that the Lebesgue integral $(\int_s^{u} {Z}_r dr)({^s} \omega, .)$ 
is equal a.s. to the integral $\int_s^{u} {Z}_r({^s} \omega, .) dr$. 
We now show that the stochastic integral
$(\int_s^{u} {Z}_r dW_r)({^s} \omega, .) $ coincides with the  stochastic integral  
of the process $Z({^s} \omega, .) $ with respect to the translated Brownian motion $W^s$, that is 
 $\int_s^{u} {Z}_r({^s} \omega, .) dW_r^s$. 
\begin{lemma}\label{zz}(Splitting properties for stochastic integrals)
Let $s \in [0,T]$. 
Let $Z\in \mathbb{H}^2$ and $K \in \mathbb{H}_{\nu}^2$.
There exists a $P$-null set $\cal N$ (which depends on $s$) such that for each $ \omega \in {\cal N}^c$, and $\tilde \omega:= $ $^{s}\omega$, we have
$(Z_r(\tilde \omega, T^s))_{r\geq s}$ $\in$ 
$\mathbb{H}_{s}^2$ and $(K_r(\tilde \omega, T^s))_{r\geq s}$ $\in$ 
$\mathbb{H}_{s, \nu}^2$,
and 
\begin{eqnarray}\label{ito}
 (\int_s^{u} {Z}_r dW_r) 
 (\tilde \omega , T^s)  &=  &  \int_s^{u} {Z}_r(\tilde \omega, T^s) dW_r^s \quad P-{\rm a.s.}\\
(\int_s^u \int_{{\bf E}} K_r(  e) \Tilde{N}(dr,de)) (\tilde \omega , T^s)  &= &
   \int_s^u \int_{{\bf E}} K_r(\tilde \omega , T^s,  e) \Tilde{N}^s(dr,de)  \quad P-{\rm a.s.}\nonumber.
\end{eqnarray}
  \end{lemma}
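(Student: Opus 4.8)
The plan is to establish the two identities in \eqref{ito} by the standard "monotone-class / approximation" strategy: first prove them for simple (elementary) integrands, where both sides reduce to finite sums that can be compared path-by-path using the splitting identification $\omega = (\,^s\omega, T^s(\omega))$; then pass to the $\mathbb{H}^2$ (resp. $\mathbb{H}^2_\nu$) limit by isometry. The membership claims $(Z_r(\tilde\omega,T^s))_{r\ge s}\in\mathbb{H}^2_s$ and $(K_r(\tilde\omega,T^s))_{r\ge s}\in\mathbb{H}^2_{s,\nu}$ are exactly (the $\mathbb{H}^2$, resp.\ $\mathbb{H}^2_\nu$, version of) Lemma \ref{measurability}, so I would invoke that lemma at the outset to fix, for a.e.\ $\omega$, the past path $\tilde\omega={}^s\omega$ for which the restricted processes lie in the right spaces; the exceptional $P$-null set $\mathcal N$ is obtained as the union of the null set from Lemma \ref{measurability} and the null sets arising in the steps below.

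\medskip

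\noindent\textbf{Step 1 (elementary integrands).}
First I would treat an elementary predictable process of the form $Z_r(\omega)=\sum_{i} \zeta_i(\omega)\,{\bf 1}_{]t_i,t_{i+1}]}(r)$ with $s\le t_0<\dots<t_{N}\le T$ and $\zeta_i$ being $\mathcal{F}_{t_i}$-measurable (a generating class for $\mathcal P$ restricted to $[s,T]$). For such $Z$ the left-hand side of the first line of \eqref{ito} is the finite sum $\sum_i \zeta_i(W_{t_{i+1}\wedge u}-W_{t_i\wedge u})$ evaluated at $(\tilde\omega,T^s(\omega'))$, while the right-hand side is $\sum_i \zeta_i(\tilde\omega,T^s(\omega'))\,(W^s_{t_{i+1}\wedge u}-W^s_{t_i\wedge u})(\omega')$. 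Since $t_i\ge s$, the increment $W_{t_{i+1}}-W_{t_i}$ evaluated at $(\tilde\omega,T^s(\omega'))$ equals $(\omega'_{t_{i+1}}-\omega_s)-(\omega'_{t_i}-\omega_s)=W^s_{t_{i+1}}(\omega')-W^s_{t_i}(\omega')$, by the splitting identification $\omega_r = \omega_{r\wedge s}+\omega^s_r{\bf 1}_{]s,T]}(r)$ used throughout this subsection. Thus the two finite sums agree pathwise, and the first identity holds for elementary $Z$. The jump-integral identity is handled identically: for $K$ elementary, $\widetilde N$ and $\widetilde N^s$ differ only through the translation of the underlying path, and $N(]t_i,t_{i+1}]\times\mathbf B)$ evaluated at $(\tilde\omega,T^s(\omega'))$ coincides with $N^s(]t_i,t_{i+1}]\times\mathbf B)(\omega')$ because both count the same jumps of the future path $\omega'$ on $]t_i,t_{i+1}]\subset]s,T]$, while the compensators $\nu(de)dr$ are identical.

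\medskip

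\noindent\textbf{Step 2 (passage to the limit).}
For general $Z\in\mathbb{H}^2$ I would pick a sequence of elementary processes $Z^n\to Z$ in $\mathbb{H}^2$. By the It\^o isometry the stochastic integrals converge in $L^2(P)$, hence (along a subsequence) $P$-a.s.; likewise $\mathbb{E}[\int_s^T (Z^n_r-Z_r)^2dr\mid\mathcal F_s]\to 0$ in $L^1$, and arguing as in the proof of Lemma \ref{measurability} via the independence of $T^s$ from $\mathcal F_s$ and the identity $\mathbb{E}[\,\cdot\mid\mathcal F_s]=F(S^s)$ with $F(\tilde\omega)=\mathbb{E}[\int_s^T(Z^n_r-Z_r)^2(\tilde\omega,T^s)dr]$, I obtain for a.e.\ $\omega$ that $Z^n(\tilde\omega,T^s)\to Z(\tilde\omega,T^s)$ in $\mathbb{H}^2_s$. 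The $W^s$-isometry then gives convergence of the right-hand sides of \eqref{ito} as well. Since both sides equal the (a.s.\ limit of the) elementary-integrand identity from Step 1, the two limits coincide for a.e.\ $\omega$, which is \eqref{ito} for $Z$; the jump integral is treated identically with the $L^2_\nu$-isometry. Collecting the countably many a.s.\ statements (one per subsequence and per generating index) into a single $P$-null set $\mathcal N$ completes the argument.

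\medskip

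\noindent\textbf{The main obstacle} is a measure-theoretic bookkeeping issue rather than a deep one: the identity \eqref{ito} is asserted to hold, for each fixed good $\omega$, as an equality of $\Omega'$-random variables $P$-a.s.\ in $\omega'$, so I must be careful that the single exceptional set $\mathcal N\subset\Omega$ (in the $\omega$ variable) can be chosen uniformly. The delicate point is the interchange of the approximation limit (which holds $P$-a.s.\ jointly in $(\omega,\omega')$) with the fixing of $\omega$: one must use Fubini together with the product structure $P=P^W\otimes P^N$ and the independence of $S^s$ and $T^s$ to conclude that for $P$-a.e.\ $\omega$ the convergence holds for $P$-a.e.\ $\omega'$. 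This is exactly the conditional-expectation-as-$F(S^s)$ device already deployed in Lemma \ref{measurability}, so I would lean on that same mechanism and thereby keep the limiting step clean.
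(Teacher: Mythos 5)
Your proposal is correct, and it follows the same two-step skeleton as the paper: a pathwise verification of \eqref{ito} for simple integrands, followed by an approximation argument in which the independence of $T^s$ from ${\cal F}_s$ (the ``freezing'' of the conditional expectation as $F(S^s)$, as in Lemma \ref{measurability}) is used to fix the past path. Where you genuinely diverge is the limiting step, which is exactly the point the authors flag as delicate (Remark \ref{remarque}). You take an \emph{arbitrary} elementary approximating sequence $Z^n\to Z$ in $\mathbb{H}^2$; since then $\mathbb{E}[\int_s^T(Z^n_r-Z_r)^2\,dr\mid{\cal F}_s]\to 0$ only in $L^1$, you must extract subsequences to get a.s.\ convergence, and invoke Fubini for the product structure induced by the pair $(S^s,T^s)$ to transfer the joint a.s.\ convergence of the stochastic integrals into, for $P$-a.e.\ $\omega$, convergence in $\omega'$ of both sides of \eqref{ito}, finally identifying the two limits by uniqueness of limits in probability. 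The paper avoids all of this by choosing the \emph{specific} averaging approximation $Z^n_r=n\sum_i(\int_{(i-1)T/n}^{iT/n}Z_u\,du)\,{\bf 1}_{]iT/n,(i+1)T/n]}(r)$, for which (inequality \eqref{Pn} in the Appendix) $\int_s^u(Z^n_r(\omega)-Z_r(\omega))^2\,dr\to 0$ for \emph{every} $\omega$, with the pathwise domination $\int_s^u (Z^n_r)^2\,dr\le\int_s^u Z_r^2\,dr$; the conditional dominated convergence theorem then yields a.s.\ convergence of the conditional expectations on a single null set, with no subsequence extraction and no Fubini, and both sides of \eqref{ito} are exhibited as $L^2(dP(\omega'))$-limits of the same sequence (via the It\^o isometry for $W^s$ on one side, and the martingale property of $M^2-\langle M\rangle$ plus freezing on the other). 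So your route is a valid alternative whose cost is heavier null-set bookkeeping (one exceptional set per subsequence and per $n$, all to be collected at the end); the paper's choice of approximation is precisely engineered so that this bookkeeping disappears, and if you substitute their $P^n Z$ for your arbitrary elementary sequence, your Step 2 collapses to their argument.
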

  
  \begin{remark}\label{tb}
In the literature, the $s$-translated Brownian motion is   often defined by 
$W'_v:= W_{s+v}- W_s= W^s_{s+v}$, $0 \leq v \leq T-s$. 
For each $Z$ $\in$ $\mathbb{H}_s^2$ and for each $u\geq s$, we have  
$\int_s^{u} {Z_r} dW_r^s= \int_0^{u- s} Z_{s+r} dW'_r $ a.s. The use of $W^s$ thus allows us to avoid a change of time.
The same remark holds for the Poisson random measure. 

Note that equality \eqref{ito}  is equivalent to 
$
(\int_s^{u} {Z}_r dW_r) (\tilde \omega , T^s({\omega'}))= (\int_s^{u} {Z_r}(\tilde \omega, T^s) dW_r^s)(\omega')
$
for $P$-almost every $\omega'$ $\in \Omega$. The same remark holds for the second equality.

%
\end{remark}

 \dproof We shall only prove the first equality with the Brownian motion. 
 The second one with the Poisson random measure can be shown by similar  arguments. \\
  Let us first show that equality \eqref{ito} holds for a simple process. 
 Let $a <  T$ and let $H$ $\in$ ${\mathbb L}^2 ({\cal F}_a)$. For each 
 $\omega \equiv (^s$$\omega, \omega^s)= (S^s(\omega), T^s(\omega))$ $\in$ $\Omega$, we have
 $$ (\int_s^{u} H \textbf{1}_{]a,T]}dW_r)({^s}\omega, \omega^s)=H({^s}\omega, \omega^s)(\omega_{u}^s-\omega_{a \wedge u }^s)=(\int_s^{u}H({^s}\omega, T^s)\textbf{1}_{]a,T]}dW_r^s)(\omega) .$$
 Let now $Z\in \mathbb{H}^2$. Let us show that $Z$ satisfies equality \eqref{ito}. The idea is to approximate $Z$ by an appropriate sequence 
 of simple processes $(Z^n)_{n \in \mathbb N}$ so that the sequence $(Z^n)_{n \in \mathbb N}$ converges in $\mathbb{H}^2$ to $Z$, and that, for almost every past path $
^s \omega$, the sequence $(Z^n(^s \omega, T^s))_{n \in \mathbb N}$ converges to $Z(^s \omega, T^s)$ in $\mathbb{H}^2_s$.
 For each $n \in \mathbb{N}^*$, define 
 $$Z^n_r:=n \sum_{i=1}^{n-1}( \int_{\frac{(i-1)T}{n}}^{\frac{iT}{n}}Z_u du) \textbf{1}_{]\frac{i T}{n}, \frac{(i+1)T}{n}]}(r).$$ 
  By  inequality \eqref{Pn} in the Appendix, we have $\int_s^u (Z_r^n (\omega))^2dr \leq \int_s^u Z_r(\omega)^2 dr$, and for each $\omega \in \Omega$ and $s \leq u$, $\int_s^u (Z_r^n(\omega)-Z_r(\omega))^2 dr \rightarrow 0.$ Since 
  $\int_s^u Z_r ^2 dr$ $\in$ $L^1(\Omega)$, it follows, by the Lebesgue theorem for the conditional expectation, that 
 \begin{equation}\label{lim}
  \mathbb{E}[\int_{s}^{u} (Z_r^n-Z_r)^2 dr | {\cal F}_{s} ]\rightarrow 0 
  \end{equation}
   excepted on a $P$-null set ${\cal N}$. Since $S^s$ is ${\cal F}_{s}$-measurable and $T^s$ is independant of ${\cal F}_{s}$, there exists a $P$-null set included in the previous one, such that
 for each $ \omega \in {\cal N}^c$,  setting $\tilde \omega= $ $^{s}\omega$, we have
 \begin{eqnarray}\label{iso}
   \mathbb{E}[\int_{s}^{u} (Z_r^n-Z_r)^2 dr | {\cal F}_{s} ](\tilde \omega)&= & \mathbb{E}[ \int_s^{u}(Z_r^n(\tilde \omega, T^s) -Z_r(\tilde \omega,T^s))^2dr ]\nonumber\\
  &=&  \mathbb{E}[ ( \int_s^{u}Z_r^n(\tilde \omega, T^s)dW_r^s-\int_s^{u}Z_r(\tilde \omega,T^s)dW_r^s )^2].
  \end{eqnarray}
 The second equality follows by the classical isometry property. Now, for each square integrable martingale $M$, $M^2 - \langle M \rangle$ is a martingale. Hence, for each $ \omega \in {\cal N}^c$, where ${\cal N}$ is a $P$-null set included in the previous one, setting $\tilde \omega= $ $^{s}\omega$, we have
\begin{eqnarray}\label{iso2}
  \mathbb{E}[\int_{s}^{u} (Z_r^n-Z_r)^2 dr | {\cal F}_{s} ](\tilde \omega)
&=&  \mathbb{E}[(\int_{s}^{u} {Z}_r^n dW_r  - \int_{s}^{u} {Z}_r dW_r)^2 | {\cal F}_{s} ](\tilde \omega)\nonumber\\
&=&  \mathbb{E}[\left( (\int_{s}^{u} {Z}_r^n dW_r )(\tilde \omega, T^s) -( \int_{s}^{u} {Z}_r dW_r)(\tilde \omega, T^s) \right)^2].
\end{eqnarray}
For each $n \in \mathbb{N}^*$, since $Z^n$ is a simple process, it satisfies equality 
 \eqref{ito} everywhere,
  that is\\
$
(\int_s^{u} {Z}_r^n dW_r) (\tilde \omega , T^s) = \int_s^{u} {Z_r^n}(\tilde \omega, T^s) dW_r^s. 
$
By the convergence property \eqref{lim}, equalities \eqref{iso} and \eqref{iso2}, and the uniqueness property of the limit in $L^2$, we derive that for each $ \omega \in {\cal N}^c$, setting $\tilde \omega= $ $^{s}\omega$, equality \eqref{ito} holds. The proof is thus complete.
 \fproof 

Using the above lemmas, we now show that for each $s \geq t$, for almost every $\omega \in \Omega$, setting $\tilde \omega= $ $^{s}\omega$, the process 
 $Y^{\alpha,t,x} (\tilde \omega, T^s)$ coincides with the solution of the reflected BSDE 
 on $\Omega \times[s,T]$, associated with driver $f^{\alpha(\tilde \omega,T^s),s,\eta(\tilde \omega)}$, with obstacle
 $\bar h(r, X_r^{\alpha(\tilde \omega,T^s),s,X_s^{\alpha(\tilde \omega),t,x}(\tilde \omega)})$ and 
  filtration ${\mathbb F}^s$, and driven by $ W^s$ and $\Tilde{N}^s$.
     
 To simplify notation, $T^s$ will be replaced by $\cdot$ in the following. In particular $Y^{\alpha,t,x} (\tilde \omega, T^s)$ will be simply denoted by $Y^{\alpha,t,x} (\tilde \omega, .)$.
 
\begin{theorem}\label{egal} (Splitting properties for  the forward-backward ``system")
Let $t \in [0,T]$, $\alpha$ $\in$ ${\cal A}_t^t$ and
$s \in [t,T]$. 
There exists a $P$-null set ${\cal N}$ (which depends on $t$ and $s$) such that for each $ \omega \in {\cal N}^c$, setting $\tilde \omega= $ $^{s}\omega$, the following properties hold:
\begin{itemize}
\item
There exists an unique solution $(X_r^{\alpha(\tilde \omega,\cdot),s,\eta(\tilde \omega)}) _{ s \leq r \leq T}$
 in ${\cal S}^2_s$ of the following SDE:
\begin{eqnarray}\label{fofo}
X_r^{\alpha(\tilde \omega,\cdot),s,\eta(\tilde \omega)}\,=\, \,\dd \eta(\tilde \omega)\!\!\!&+ &\!\!\! \!\int_s^r b(X_v^{\alpha(\tilde \omega,\cdot),s,\eta(\tilde \omega)}, \alpha_v(\tilde \omega,\cdot))dv+\int_s^r \sigma(X_v^{\alpha(\tilde \omega,\cdot),s,\eta(\tilde \omega)}, \alpha_v(\tilde \omega,\cdot))dW^s_v \nonumber\\
&+&\int_s^r \int_{{\bf E}} \beta(X_{v^{-}}^{\alpha(\tilde \omega,\cdot),s,\eta(\tilde \omega)}, \alpha_v(\tilde \omega,\cdot),e) \Tilde{N}^s(dv,de),\,\, 
\end{eqnarray}
where $\eta(\tilde \omega):=X_s^{\alpha(\tilde \omega),t,x}(\tilde \omega)$.
 We also have
 $X_r^{\alpha, t,x}(\tilde \omega, .)  = X_r^{\alpha(\tilde \omega,\cdot),s,\eta(\tilde \omega)}$,
 $s\leq r \leq T$ $P$-a.s.
 \item
There exists an unique solution
  $(Y_{r}^{ \alpha(\tilde \omega, \cdot),s,\eta(\tilde\omega)}, Z_{r}^{ \alpha(\tilde\omega, \cdot),s,\eta(\tilde\omega)}, K_{r}^{ \alpha(\tilde\omega, \cdot),s,\eta(\tilde\omega)})
 _{ s \leq r \leq T}$ in 
${\cal S}^2_s$ $\times$ $\mathbb{H}^2_s$ $ \times$ $\mathbb{H}^2_{s, \nu}$ of the reflected BSDE on $\Omega \times[s,T]$ driven by $ W^s$ and $\Tilde{N}^s$ and associated with filtration $\mathbb{F}^s$, driver 
$f^{\alpha(\tilde \omega,.),s,\eta(\tilde \omega)}$,  and obstacle
 $\bar h(r, X_r^{\alpha(\tilde \omega,\cdot),s,\eta(\tilde \omega)})$. We have:
 \begin{eqnarray}\label{abc}
 Y_r^{\alpha,t,x} (\tilde\omega, .) & =& Y_{r}^{ \alpha(\tilde\omega, .),s,\eta(\tilde\omega)} ,
 \quad  s\leq r \leq T, \quad P-{\rm a.s.}\\
  Z_r^{\alpha,t,x} (\tilde\omega, .)   = Z_{r}^{ \alpha(\tilde\omega, \cdot),s,\eta(\tilde\omega)}& {\rm and}&
  K_r^{\alpha,t,x} (\tilde\omega,.)   = 
 K_{r}^{ \alpha(\tilde\omega, \cdot),s,\eta(\tilde\omega)} , \quad  s\leq r \leq T, \quad dP\otimes dr-{\rm a.s.} \nonumber
 \end{eqnarray}
\begin{equation}\label{abcd}
Y_s^{\alpha,t,x} (\tilde \omega, .)  = Y_{s}^{ \alpha(\tilde\omega, \cdot),s,\eta(\tilde\omega)} = u^{ \alpha(\tilde\omega, \cdot)}(s,\eta(\tilde\omega))  \quad P-{\rm a.s.}
\end{equation}
\end{itemize}
\end{theorem}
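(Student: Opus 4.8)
The plan is to freeze the past path $\tilde\omega:={}^s\omega$ and to verify, term by term in \eqref{richessebis} and \eqref{4.4}, that the frozen forward process and the frozen backward quadruple solve exactly the $s$-indexed forward--backward system on $\Omega\times[s,T]$; the asserted identities then follow from the uniqueness of solutions of \eqref{fofo} and of the associated reflected BSDE. Every $P$-null set produced in the argument being one of countably many, I would collect them all into a single $\mathcal{N}$ and work on $\mathcal{N}^c$.

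First I would treat the forward equation. By Lemma \ref{measurability}, for a.e. $\omega$ the frozen control $\alpha(\tilde\omega,\cdot)$ lies in $\mathcal{A}_s^s$, so the SDE \eqref{fofo} with Lipschitz coefficients $b,\sigma,\beta$ has a unique solution in $\mathcal{S}_s^2$. To identify it with $X^{\alpha,t,x}(\tilde\omega,\cdot)$, I restrict \eqref{richessebis} to $[s,T]$, using that for $v\ge s$ the increments of $W^t$ (resp. $N^t$) coincide with those of $W^s$ (resp. $N^s$), whence $\int_s^r\cdot\,dW^t=\int_s^r\cdot\,dW^s$ and likewise for the jump integral. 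After extending the coefficient processes by zero on $[0,t)$ so as to land in $\mathbb{H}^2$ and $\mathbb{H}_\nu^2$, freezing $\tilde\omega$ splits the drift integral trivially (Fubini) and the stochastic and jump integrals by Lemma \ref{zz}. Since $X_s^{\alpha,t,x}$ is $\mathcal{F}_s^t$-measurable, its value depends only on $\tilde\omega$, which justifies taking $\eta(\tilde\omega):=X_s^{\alpha,t,x}(\tilde\omega)$ as a deterministic initial datum. Thus $(X_r^{\alpha,t,x}(\tilde\omega,\cdot))_{s\le r\le T}$ solves \eqref{fofo}, and uniqueness gives $X_r^{\alpha,t,x}(\tilde\omega,\cdot)=X_r^{\alpha(\tilde\omega,\cdot),s,\eta(\tilde\omega)}$ for $s\le r\le T$, $P$-a.s.

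Next I would repeat the scheme for the reflected BSDE \eqref{4.4}, written in integral form from $r$ to $T$. Freezing $\tilde\omega$, applying Lemma \ref{zz} to the two martingale parts and Fubini to the Lebesgue integral, the generator term becomes $\int_r^T f(\alpha_v(\tilde\omega,\cdot),v,X_v^{\alpha,t,x}(\tilde\omega,\cdot),Y_v^{\alpha,t,x}(\tilde\omega,\cdot),\dots)\,dv$; substituting the forward identity just obtained converts this into the driver $f^{\alpha(\tilde\omega,\cdot),s,\eta(\tilde\omega)}$. The obstacle constraint $Y_r\ge\bar h(r,X_r^{\alpha,t,x})$ becomes $Y_r(\tilde\omega,\cdot)\ge\bar h(r,X_r^{\alpha(\tilde\omega,\cdot),s,\eta(\tilde\omega)})$, while $A^{\alpha,t,x}(\tilde\omega,\cdot)$ remains RCLL nondecreasing, $\mathcal{P}^s$-measurable, with its Skorohod minimality and jump conditions intact. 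Hence the frozen quadruple solves the reflected BSDE on $\Omega\times[s,T]$ driven by $W^s,\tilde N^s$ with filtration $\mathbb{F}^s$, driver $f^{\alpha(\tilde\omega,\cdot),s,\eta(\tilde\omega)}$ and obstacle $\bar h(\cdot,X^{\alpha(\tilde\omega,\cdot),s,\eta(\tilde\omega)})$, so by uniqueness (Theorem 3.2 in \cite{17}) it coincides with $(Y,Z,K)^{\alpha(\tilde\omega,\cdot),s,\eta(\tilde\omega)}$: this gives \eqref{abc}, the $Y$-equality for all $r$ by the RCLL property and the $Z,K$-equalities $dP\otimes dr$-a.s. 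Finally \eqref{abcd} follows by setting $r=s$ in \eqref{abc} and invoking \eqref{caractb}, which identifies the initial value of the reflected BSDE with $u^{\alpha(\tilde\omega,\cdot)}(s,\eta(\tilde\omega))$.

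The main obstacle will be the treatment of the nondecreasing process $A$, which is not covered by Lemma \ref{zz}: I must argue separately that freezing the past path preserves its monotonicity, right-continuity and $\mathcal{P}^s$-measurability (the measurability step in the proof of Lemma \ref{measurability} uses no square-integrability and applies verbatim to $A$) together with the pathwise Skorohod conditions, and that the substitution of the forward identity inside the driver can be performed on the very null set on which that identity holds. Once all these pointwise identities are secured simultaneously on a common $\mathcal{N}^c$, the existence--uniqueness theorems for \eqref{fofo} and for the reflected BSDE close the argument.
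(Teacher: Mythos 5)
Your proposal is correct and follows essentially the same route as the paper: Lemma \ref{measurability} to put the frozen control in $\mathcal{A}_s^s$, Lemma \ref{zz} to split the stochastic and jump integrals, then uniqueness of the solutions of \eqref{fofo} and of the Lipschitz reflected BSDE (the latter resting on martingale representation for $\mathbb{F}^s$-martingales), with \eqref{abcd} obtained by taking $r=s$ and using \eqref{caractb}. In fact you spell out the one point the paper leaves as ``similar arguments,'' namely the verification that the nondecreasing process $A$ and its Skorohod conditions survive the freezing of the past path.
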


\dproof 
Recall that by Lemma \ref{measurability}, there exists a $P$-null set ${\cal N}$ such that for each $ \omega \in {\cal N}^c$, the process $\alpha(^{s}\omega, \cdot):= (\alpha_r(^{s} \omega, T_s))_{r\geq s}$ 
belongs to  $\mathcal{A}_{s}^{s}$.\\
 Let us show the first assertion. To simplify the exposition, we suppose that there is no Poisson random measure. There exists a $P$-null set, still denoted by ${\cal N}$, included in the above one such that for each $ \omega \in {\cal N}^c$, setting $\tilde \omega= $ $^{s}\omega$,
  \begin{eqnarray*}
X_r^{\alpha,t,x}(\tilde \omega, .) \,=\, \,\dd \eta(\tilde \omega)\!\!\!&+ &\!\!\! \!\int_s^r b(X_v^{\alpha,t,x}(\tilde \omega, .), \alpha_v(\tilde \omega, .))dv+(\int_s^r \sigma(X_v^{\alpha,t,x}, \alpha_v)dW_v) (\tilde \omega, .),
\end{eqnarray*}
on $[s,T]$  $P$-a.s. Now, by the first equality in Lemma \ref{zz}, 
there exists a $P$-null set ${\cal N}$  such that for each $ \omega \in {\cal N}^c$, setting   $\tilde \omega= $ $^{s}\omega$, we have
$$(\int_s^r \sigma(X_v^{\alpha,t,x}, \alpha_v)dW_v) (\tilde \omega, .) = 
\int_s^r \sigma(X_v^{\alpha(\tilde \omega,\cdot),s,\eta(\tilde \omega)}, \alpha_v(\tilde \omega,\cdot))dW^s_v\quad P-{\rm a.s.}\,,$$
which implies that the process $ (X_r^{\alpha,t,x}(\tilde \omega, \cdot))_{r \in [s,T]}$ is a solution
  of SDE \eqref{fofo}, and then, by  uniqueness of the solution of this SDE, we  have
  $X_r^{\alpha,t,x}(\tilde \omega, .)= X_r^{\alpha (\tilde\omega,.),s,\eta (\tilde\omega)},$ $s\leq r \leq T,
   $ $P$-a.s.\\ 
 Let us show the second assertion. First, note that since the filtration 
 ${\mathbb F}^s$ is the {\em completed} filtration of the natural filtration of 
$ W^s$ and $\Tilde{N}^s$ (with respect to the initial $\sigma$-algebra ${\cal B}(\Omega))$, we have a martingale representation theorem for 
  ${\mathbb F}^s$-martingales with respect to $ W^s$ and $\Tilde{N}^s$. Hence, there exists an unique solution
  $(Y_{r}^{ \alpha(\tilde \omega, \cdot),s,\eta(\tilde\omega)}, Z_{r}^{ \alpha(\tilde\omega, \cdot),s,\eta(\tilde\omega)}, K_{r}^{ \alpha(\tilde\omega, \cdot),s,\eta(\tilde\omega)})
 _{ s \leq r \leq T}$ in 
${\cal S}^2_s \times \mathbb{H}^2_s \times \mathbb{H}^2_{s, \nu}$ of the reflected BSDE on $\Omega \times[s,T]$ driven by $ W^s$ and $\Tilde{N}^s$ and associated with filtration $\mathbb{F}^s$ and with obstacle
 $\bar h(r, X_r^{\alpha(\tilde \omega,\cdot),s,\eta(\tilde \omega)})$.
  Equalities \eqref{abc} then
   follow from similar arguments as above together with the uniqueness of the solution of a Lipschitz RBSDE.
  Equality \eqref{abcd} is obtained by taking $r=s$ in equality \eqref{abc} and by using the definition of $u ^{\alpha (\tilde\omega,.)}$.
  \fproof

\begin{remark}\label{remarque} 
In the above proofs, we have treated the  {\em $P$-null sets} issues carefully. We stress that all the filtrations are {\em completed} with respect to 
${\cal B}(\Omega)$ and $P$. The underlying probability space is thus always  the {\em completion} of the initial probability space $(\Omega, {\cal B}(\Omega),P)$ (that is $(\Omega, {\cal F}_T,P)$). Note that the  {\em $P$-null sets} remain always the same, which is particularly important for stochastic integrals (see Lemma \ref{zz}), and also for BSDEs because the solution of a BSDE is unique up to a 
{\em $P$-null set}.\\
Moreover, in the proof  of Lemma \ref{zz}, the choice of the sequence of
step functions approximating the process $Z$  is appropriate to handle the issues of $P$-null sets.

Note that Theorem \ref{egal} applied to the simpler case when $\alpha$ $\in$ $\mathcal{A}_{s}^{s}$ ensures that the solution 
of \eqref{4.4} (with $t$ replaced by $s$) coincides on $[s, T] \times \Omega$ with the solution in ${\cal S}^2 \times \mathbb{H}^{2} \times \mathbb{H}^{2}_{\nu} $ of the reflected BSDE similar to \eqref{4.4} but 
driven by $W$ and $\tilde N$ instead of $W^s$ and $\tilde N^s$, and associated with  
${\mathbb F}$. 
\end{remark}

\subsection{Measurability properties and $\varepsilon$-optimal controls}\label{mesusection}

We need  to show a measurability property of the function $u^{ \alpha }(t,x)$ with respect  to 
control $\alpha$ and initial condition $x$. To this purpose, we first provide a preliminary result, which will allow us to handle the nonlinearity of the expectation.
\begin{proposition} \label{gborelian} 
Let $(\Omega, {\cal F}, P)$ be a probability space. 
For each $q\geq 0$, we denote by  $L^{q}$ the set 
$L^{q}(\Omega, {\cal F}, P) $. Suppose that the Hilbert space $L^2$ equipped with the usual scalar product is separable.

Let $g: \mathbb{R} \rightarrow  \mathbb{R}$ be a Borelian function such that $| g(x)| \leq C(1+ |x|^p)$ 
for each real $x$, with $p\geq 0$.
The map  $\varphi ^g$ defined by 
 \begin{equation*}
 \varphi ^g: L^{2p} \cap L^2 \rightarrow L^2; \,\,  \xi \mapsto g \circ \xi \,(\,= g(\xi))
 \end{equation*}
  is then ${\cal B}' ( L^{2p} \cap L^2) /  {\cal B}(L^2)$-measurable, 
where  ${\cal B}(L^2)$ is the Borelian $\sigma$-algebra on $L^2$, and ${\cal B}' ( L^{2p} \cap L^2 )$
  is the $\sigma$-algebra 
 induced by  ${\cal B}(L^2)$ on $L^{2p} \cap L^2$.
\end{proposition}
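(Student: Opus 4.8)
The plan is to use the separability of $L^2$ to reduce the measurability of the vector-valued map $\varphi^g$ to that of a family of scalar functionals, and then to run a functional monotone class argument in the variable $g$. First note that $\varphi^g$ is well defined: for $\xi \in L^{2p}\cap L^2$ the growth bound gives $|g(\xi)|^2 \leq 2C^2(1+|\xi|^{2p})$, so $g(\xi)\in L^2$, and $\mathbb{E}[|g(\xi)h|]<\infty$ for every $h\in L^2$ by Cauchy--Schwarz. Since $L^2$ is a separable Hilbert space, its norm-Borel $\sigma$-algebra $\mathcal{B}(L^2)$ is generated by the continuous linear functionals $\xi \mapsto \langle \xi,h\rangle=\mathbb{E}[\xi h]$, indeed by the countable subfamily indexed by a dense sequence $(h_k)$ in $L^2$. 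Consequently, $\varphi^g$ is $\mathcal{B}'(L^{2p}\cap L^2)/\mathcal{B}(L^2)$-measurable if and only if, for every $h\in L^2$, the real-valued map $\Phi^g_h:\xi\mapsto \langle g(\xi),h\rangle=\mathbb{E}[g(\xi)h]$ is $\mathcal{B}'(L^{2p}\cap L^2)$-measurable.

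Fix $h\in L^2$. I would first treat a bounded \emph{continuous} $g$ and show that then $\varphi^g:L^2\to L^2$ is continuous. If $\xi_n\to\xi$ in $L^2$, then from any subsequence one extracts a further subsequence converging a.s.; along it $g(\xi_{n_j})\to g(\xi)$ a.s. by continuity and in $L^2$ by dominated convergence (using boundedness of $g$), and the subsequence principle yields $g(\xi_n)\to g(\xi)$ in $L^2$. Hence $\Phi^g_h$ is continuous on $L^2$, so its restriction to $L^{2p}\cap L^2$ is $\mathcal{B}'$-measurable.

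Next I would let $\mathcal{H}$ be the set of bounded Borel functions $g$ for which $\Phi^g_h$ is $\mathcal{B}'$-measurable. This $\mathcal{H}$ is a vector space containing the constants, it contains all bounded continuous functions by the previous step, and it is closed under uniformly bounded pointwise limits: if $g_n\in\mathcal{H}$ with $|g_n|\leq M$ and $g_n\to g$ pointwise, then for each fixed $\xi$ dominated convergence gives $\Phi^{g_n}_h(\xi)\to\Phi^g_h(\xi)$, so $\Phi^g_h$ is a pointwise limit of $\mathcal{B}'$-measurable functions, hence $\mathcal{B}'$-measurable. Since $C_b(\mathbb{R})$ is stable under multiplication and generates $\mathcal{B}(\mathbb{R})$, the functional monotone class theorem gives that $\mathcal{H}$ contains \emph{all} bounded Borel functions.

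Finally I would remove boundedness using the polynomial growth. For general Borel $g$ with $|g(x)|\leq C(1+|x|^p)$, set $g_N:=(-N)\vee(g\wedge N)$, which is bounded Borel, so $\Phi^{g_N}_h$ is $\mathcal{B}'$-measurable. For each fixed $\xi\in L^{2p}\cap L^2$ one has $g_N(\xi)\to g(\xi)$ pointwise with $|g_N(\xi)h|\leq |g(\xi)||h|\in L^1$, so dominated convergence gives $\Phi^{g_N}_h(\xi)\to\Phi^g_h(\xi)$; thus $\Phi^g_h$ is again a pointwise limit of $\mathcal{B}'$-measurable maps, hence $\mathcal{B}'$-measurable. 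As this holds for every $h$ (in particular for the generating sequence $(h_k)$), the reduction of the first paragraph yields the claimed measurability of $\varphi^g$. I expect the main obstacle to be precisely this reduction step, namely justifying on the separable space $L^2$ that the norm-Borel $\sigma$-algebra is generated by the scalar functionals $\langle\cdot,h\rangle$ (this is exactly where separability enters), together with the bookkeeping forced by the fact that the domain carries only the \emph{trace} $\sigma$-algebra $\mathcal{B}'(L^{2p}\cap L^2)$ and not an intrinsic $L^{2p}$-topology, so every limiting argument must remain at the level of $L^2$-measurability.
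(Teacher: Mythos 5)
Your proof is correct and follows essentially the same route as the paper's: both exploit separability of $L^2$ to reduce the measurability of $\varphi^g$ to that of the scalar maps $\xi \mapsto \mathbb{E}[g(\xi)h]$, then run a monotone class argument over $g$ (the paper seeds it with nondecreasing Lipschitz approximations of indicators $\mathbf{1}_{]a,b[}$, you with bounded continuous functions and the multiplicative-class form of the theorem), and finally pass to unbounded $g$ by a dominated-convergence limit. The differences are cosmetic rather than structural.
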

\noindent The proof of this proposition is postponed in the Appendix.\\
Using this result, we  now show the following measurability property.

\begin{theorem}\label{mesu} 
Let $s \in [0,T]$.
 The map $(\alpha,x) \mapsto u^{ \alpha } (s,x)$; ${\mathcal{A}}_s^s \times \mathbb{R} \rightarrow  \mathbb{R}$, 
 is  $\mathcal{B}'({\mathcal{A}}_s^s)
 \otimes
 \mathcal{B}(  \mathbb{R}) / \mathcal{B}(  \mathbb{R})$-measurable,
 where 
 $ \mathcal{B}'({\mathcal{A}}_s^s)$ denotes the $\sigma$-algebra 
 induced by  ${\cal B}({\mathbb H}_s^2)$ on ${\mathcal{A}}_s^s$.  
\end{theorem}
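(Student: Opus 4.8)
The plan is to prove measurability by expressing the map $(\alpha,x) \mapsto u^{\alpha}(s,x)$ as a limit, along a sequence of simple approximations, of maps that are manifestly jointly measurable, and to control each approximation using the reflected-BSDE characterization $u^{\alpha}(s,x) = Y_{s}^{\alpha,s,x}[g(X_T^{\alpha,s,x})]$ from \eqref{caractb}. The whole construction reduces the problem to the measurability of the data (the forward SDE solution, the driver, the obstacle) as functions of $(\alpha,x)$, followed by a stability argument transporting this measurability through the nonlinear solution maps of the SDE and the RBSDE.

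First I would treat the forward flow. The plan is to show that the map $(\alpha,x) \mapsto X_\cdot^{\alpha,s,x}$, viewed with values in ${\cal S}_s^2$ (or at least in $\mathbb{H}_s^2$), is $\mathcal{B}'({\mathcal{A}}_s^s) \otimes \mathcal{B}(\mathbb{R})/\mathcal{B}$-measurable. Since $X^{\alpha,s,x}$ is obtained as the limit of Picard iterates, and each iterate is built from $b,\sigma,\beta$ composed with the previous iterate plus stochastic integrals that depend linearly and continuously on the integrand, measurability propagates from one iterate to the next; the Lipschitz estimates guarantee convergence in $\mathcal{S}_s^2$ uniformly enough that the limit is again jointly measurable. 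A subtle point here is that the dependence on $\alpha$ enters through $\alpha$ as an element of $\mathbb{H}_s^2$, so the scalar product structure and separability of $L^2$ (invoked in Proposition \ref{gborelian}) are what make the relevant $\sigma$-algebras behave well.

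Next I would handle the terminal condition and the obstacle. The reward $g(X_T^{\alpha,s,x})$ is where the Borelian (non-regular) nature of $g$ bites, and this is exactly why Proposition \ref{gborelian} was stated: the map $\xi \mapsto g(\xi)$ from $L^{2p}\cap L^2$ into $L^2$ is measurable for the induced Borel $\sigma$-algebra, so composing it with the already-measurable map $(\alpha,x)\mapsto X_T^{\alpha,s,x}$ (which lies in $L^{2p}\cap L^2$ by the polynomial growth bounds) yields a measurable map into $L^2({\cal F}_T)$. For the obstacle $\bar h(r,X_r^{\alpha,s,x})$ one can use the Lipschitz continuity of $h$ in $x$, which makes $(\alpha,x)\mapsto (\bar h(r,X_r^{\alpha,s,x}))_r$ continuous, hence measurable, into ${\cal S}_s^2$. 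Then I would invoke the fact that the solution map of a Lipschitz reflected BSDE — sending (driver, terminal condition, obstacle) to its solution $Y \in \mathcal{S}_s^2$ — is continuous for the natural norms; composing this continuous map with the measurable data map gives that $(\alpha,x)\mapsto Y_\cdot^{\alpha,s,x}$ is measurable into $\mathcal{S}_s^2$. Finally, evaluation at time $s$, $Y_\cdot \mapsto Y_s$, is continuous (hence measurable) from $\mathcal{S}_s^2$ into $L^2({\cal F}_s) = L^2({\cal F}_0) = \mathbb{R}$ (recalling that ${\cal F}_s^s = {\cal F}_0$ is generated by $P$-null sets, so $Y_s$ is deterministic), which delivers the claimed measurability of $(\alpha,x)\mapsto u^{\alpha}(s,x)$.

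The main obstacle I expect is the terminal reward step: because $g$ is merely Borelian, the map $(\alpha,x)\mapsto g(X_T^{\alpha,s,x})$ cannot be shown measurable by any continuity or approximation-in-$x$ argument, and one genuinely needs the abstract composition result of Proposition \ref{gborelian} together with the separability of $L^2$. A secondary technical difficulty is bookkeeping the $\sigma$-algebras: one must consistently work with the induced Borel $\sigma$-algebra $\mathcal{B}'({\mathcal{A}}_s^s)$ coming from ${\cal B}(\mathbb{H}_s^2)$ and verify that each intermediate map (Picard iterate, stochastic integral, RBSDE solution map) respects the induced-$\sigma$-algebra structure on the relevant subsets $L^{2p}\cap L^2$ of $L^2$; the polynomial growth assumptions on $b,\sigma,\beta,f,h,g$ are precisely what keep all iterates inside the right integrability classes so that these compositions are well defined and measurable.
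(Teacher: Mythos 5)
Your proposal is correct and follows essentially the same route as the paper: establish measurability (in fact Lipschitz continuity) of the data maps $(\alpha,x)\mapsto X^{\alpha,s,x}_\cdot$, $(\alpha,x)\mapsto h(\cdot,X^{\alpha,s,x}_\cdot)$, handle the Borelian terminal reward via Proposition \ref{gborelian} together with the separability of $L^2_s$, and then compose with the RBSDE solution map, whose Lipschitz continuity in (driver, obstacle, terminal condition) follows from a priori estimates. The only cosmetic difference is that you obtain measurability of the forward flow by propagating it through Picard iterates, whereas the paper simply invokes the standard stability estimate giving Lipschitz continuity of $(\alpha,x)\mapsto X^{\alpha,s,x}$ in $\|\cdot\|_{\mathbb{H}_s^2}+|\cdot|$; both are valid.
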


\dproof 
Recall that $ u^\alpha(s,x)={Y}_{s,T}^{\alpha,s,x}[g(X_{T}^{\alpha,s,x})]$ is also denoted by 
${Y}_{s,T}^{\alpha,s,x}[\bar h(.,X_{.}^{\alpha,s,x})]$.\\
Let $x^1, x^2$ $\in \mathbb{R}$, 
and $\alpha^1,\alpha^2 \in \mathcal{A}_s ^s.$
By classical estimates on diffusion processes and the assumptions made on the coefficients,  we get
\begin{equation}\label{xesti}
 \mathbb{E}[\sup_{r \geq s }|X_r^{\alpha^1,s,x^1}-X_r^{\alpha^2,s,x^2}|^2]\leq 
 C (\| \alpha^1-\alpha^2 \|^2_{\mathbb{H}_s^2} + |x^1- x^2|^2 ). 
 \end{equation}
  We introduce the map $\Phi: \mathcal{A}_s ^s \times \mathbb{R} \times 
  {\cal S}_s^2 \times L_s^2 \rightarrow {\cal S}_s^2$; 
 $(\alpha,x, \zeta_\cdot , \xi) \mapsto {Y}_{s,T}^{\alpha,s,x} [ \eta_\cdot , \xi]$, where   
 \\ ${Y}_{s,T}^{\alpha,s,x} [ \zeta_\cdot , \xi]$ denotes here the solution at time $s$
  of the  reflected BSDE associated with driver $f^{\alpha,s,x}:= (f(\alpha_r,r, X_{r}^{\alpha,s,x},.){\bf 1}
  _{r \geq s}
  ) $ 
  obstacle 
  $(\eta_s)_{s<T}$ and terminal condition $\xi$. \\
  By the estimates on RBSDEs (see the Appendix in \cite{DQS}), 
  using the Lipschitz property of 
$f$ w.r. to $x, \alpha$ and estimates \eqref{xesti}, 
for all $x^1, x^2$ $\in \mathbb{R}$, 
$\alpha^1,\alpha^2 \in \mathcal{A}_s ^s$, $\eta_\cdot^1,\eta_\cdot^2 \in S_s ^2$ and $\xi^1,\xi^2 \in L_s ^2$, we have
\begin{equation*}
 |Y_{s,T}^{\alpha^1,s,x^1}[\eta_\cdot^1, \xi^1]-Y_{s,T}^{\alpha^2,s,x^2}[\eta_\cdot^2, \xi^2]|^2\leq 
 C (\| \alpha^1-\alpha^2 \|^2_{\mathbb{H}_s^2} + |x^1- x^2|^2 +  \| \eta_\cdot^1-\eta_\cdot^2 \|^2_{S_s^2}+  \| \xi^1-\xi ^2 \|^2_{L_s^2}). 
 \end{equation*}
The map $\Phi$  is thus Lipschitz-continuous with respect to the norm 
  $\| \,. \,\|^2_{\mathbb{H}_s^2} + |\,.\,|^2  + \| \,. \,\|^2_{{\cal S}_s^2} + \| \,. \,\|^2_{L_s^2}$. 
  
 Recall that by assumption, $| h(t,x)| \leq C(1+ |x|^p)$, and that $h$
 is Lipschitz continuous with respect to $x$ uniformly in $t$.
 One can derive that  the map $S_s ^{2p} \cap S_s ^{2}  \rightarrow S_s^2$, 
 $\eta_\cdot \mapsto h(., \eta_\cdot)$ is Lipschitz-continuous for the norm $\| . \|^2_{S_s^2}$ and thus Borelian,   $S_s^2$ being equipped with the Borelian $\sigma$-algebra ${\cal B}(S_s^2)$ and 
  its sub-space $S_s ^{2p} \cap S_s ^{2}$ with the $\sigma$-algebra 
 induced by  ${\cal B}(S_s^2)$.

Moreover, by Lemma \ref{sepa}, the Hilbert space $L_s^2$  is separable. 
We can thus apply Proposition \ref{gborelian} and get that the map $L_s^{2p} \cap L_s^2 \rightarrow L_s^2$,  $\xi \mapsto g(\xi)$ is Borelian.
 

 We thus derive that 
 the map 
  $(\alpha,x) \mapsto (\alpha, x,h(.,X_{.}^{\alpha,s,x}), g (X_{T}^{\alpha,s,x}))$ defined on $\mathcal{A}_s ^s 
   \times \mathbb{R}   $ and 
  valued in 
  $\mathcal{A}_s ^s 
   \times \mathbb{R}  \times {\cal S}_s^2 \times L_s^2
 $ is $\mathcal{B}'({\mathcal{A}}_s^s) \otimes {\cal B}(\mathbb{R})/  \mathcal{B}'({\mathcal{A}}_s^s)    \otimes {\cal B}(\mathbb{R}) \otimes {\cal B}({\cal S}_s^2)  \otimes {\cal B}
(L_s^2) $-measurable. 
  By composition, it follows that the map 
$(\alpha,x) \mapsto  {Y}_{s,T}^{\alpha,s,x}[ h(.,X_{.}^{\alpha,s,x}), g (X_{T}^{\alpha,s,x})]$ $= u^\alpha(s,x)$ is measurable. 
\fproof

For each $(t,s)$ with $s \geq t$, we introduce the set $\mathcal{A}_s^t$ of restrictions to $[s,T]$ of the controls in $\mathcal{A}_t^t$. They can also be identified to the controls $\alpha$ in $\mathcal{A}_t^t$ which are equal to $0$ on $[t,s]$.


Let $\eta \in {L}^2(\mathcal{F}_{s}^t).$ 
Since $\eta$ is $\mathcal{F}_{s}$-measurable, up to a $P$-null set, it can be written as a 
measurable map, still denoted by $\eta$, of the past trajectory $^s\omega$ (see the argument used in the proof of Lemma \ref{sepa} for details).
For each $\omega \in {\Omega}$, by using the definition of the function $u$, we have:
\begin{equation}\label{aux}
u(s,\eta(^s \omega))=\sup_{\alpha \in \mathcal{A}_s^s} u^{\alpha}(s, \eta(^s\omega)).
\end{equation}
%

 
 By Theorem \ref{mesu} together with a measurable selection theorem, we show
 the existence of nearly optimal controls for \eqref{aux} satisfying some specific measurability properties.

\begin{theorem}\label{select}(Existence of $\varepsilon$-optimal controls)
Let $t \in [0,T]$, $s \in [t,T[$ and $\eta \in {L}^2(\mathcal{F}_s^t).$ Let $\varepsilon >0$. 
There exists 
$\alpha^{\varepsilon}$ $\in \mathcal{A}_s^t$ such that,
for almost every $ \omega \in \,  \Omega$,  
$\alpha^{\varepsilon}(^s \omega,T^s)$ is $\varepsilon$-optimal
 for Problem \eqref{aux},
 in the sense that  
 \begin{equation*}\label{optima}
u(s,\eta(^s \omega)) \,\leq \, u^{\alpha^{\varepsilon}(^s\omega,T^s)}(s, \eta(^s\omega))+\varepsilon.
\end{equation*}

\end{theorem}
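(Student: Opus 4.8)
The plan is to reduce the problem to a deterministic measurable selection in the state variable $x$, and then to transfer the selection back to an admissible control in $\mathcal{A}_s^t$ by ``freezing the past'' via the splitting results of Section~\ref{section3}. First, since $\eta \in L^2(\mathcal{F}_s^t)$ is $\mathcal{F}_s$-measurable, up to a $P$-null set it is a Borel function of the past trajectory $^s\omega$, which we still denote by $\eta$. Recall from \eqref{aux} that for each $\omega$, $u(s,\eta(^s\omega)) = \sup_{\alpha \in \mathcal{A}_s^s} u^\alpha(s,\eta(^s\omega))$. By Theorem~\ref{mesu}, the map $(\alpha,x)\mapsto u^\alpha(s,x)$ is $\mathcal{B}'(\mathcal{A}_s^s)\otimes \mathcal{B}(\mathbb{R})/\mathcal{B}(\mathbb{R})$-measurable; and since $\mathbb{H}_s^2$ is a separable Hilbert space and $\mathcal{A}_s^s$ is a closed subset of it (a limit in $\mathbb{H}_s^2$ of ${\bf A}$-valued predictable processes is again ${\bf A}$-valued, as ${\bf A}$ is compact), $(\mathcal{A}_s^s,\mathcal{B}'(\mathcal{A}_s^s))$ is a standard Borel space. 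Thus $\{u^\alpha(s,\cdot)\}_\alpha$ is a jointly measurable, Borel-parametrized family.

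Next I would apply a measurable selection theorem to the set
\begin{equation*}
\Gamma_\varepsilon := \{(x,\alpha)\in \mathbb{R}\times \mathcal{A}_s^s : u^\alpha(s,x) \geq u(s,x) - \varepsilon\}.
\end{equation*}
For each fixed $x$ the $x$-section of $\Gamma_\varepsilon$ is nonempty by the very definition of the supremum. Since $u^\alpha(s,x)$ is jointly measurable and $u(s,x)=\sup_\alpha u^\alpha(s,x)$, the Jankov--von Neumann selection theorem (in its analytic-measurability form, applicable because $\mathcal{A}_s^s$ is standard Borel) yields a universally measurable map $x\mapsto \hat\alpha(x)\in\mathcal{A}_s^s$ with $(x,\hat\alpha(x))\in\Gamma_\varepsilon$ for every $x$, i.e.
\begin{equation*}
u^{\hat\alpha(x)}(s,x) \geq u(s,x) - \varepsilon, \qquad x \in \mathbb{R}.
\end{equation*}

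I then fix a $\mathcal{B}(\mathbb{R})\otimes \mathcal{P}^s$-measurable representative of the kernel $(x,\omega',r)\mapsto \hat\alpha(x)_r(\omega')$ (valued in ${\bf A}$), obtained by approximating the $\mathbb{H}_s^2$-valued map $\hat\alpha$ by simple functions and passing to an a.e.\ limit, and define the candidate control on $\Omega\times[s,T]$ by
\begin{equation*}
\alpha^\varepsilon_r(\omega) := \hat\alpha\big(\eta(^s\omega)\big)_r\big(T^s(\omega)\big).
\end{equation*}
As $\omega\mapsto \eta(^s\omega)$ is $\mathcal{F}_s^t$-measurable and $(\omega,r)\mapsto (T^s(\omega),r)$ is $\mathcal{P}^t$-measurable, composition shows that $\alpha^\varepsilon$ is an ${\bf A}$-valued $\mathcal{P}^t$-measurable process on $\Omega\times[s,T]$, hence an element of $\mathcal{A}_s^t$.

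It remains to verify $\varepsilon$-optimality of the frozen-past version. By Lemma~\ref{measurability}, for almost every $\omega$, setting $\tilde\omega = {}^s\omega$, the process $\alpha^\varepsilon(\tilde\omega, T^s)$ lies in $\mathcal{A}_s^s$; moreover, since $^s(\tilde\omega, T^s(\omega'))=\tilde\omega$ and $T^s(\tilde\omega,T^s(\omega'))=T^s(\omega')$, the definition of $\alpha^\varepsilon$ gives $\alpha^\varepsilon(\tilde\omega,T^s) = \hat\alpha(\eta(\tilde\omega))$ as an element of $\mathcal{A}_s^s$. Hence, by the defining property of the selector,
\begin{equation*}
u^{\alpha^\varepsilon(^s\omega,T^s)}(s,\eta(^s\omega)) = u^{\hat\alpha(\eta(^s\omega))}(s,\eta(^s\omega)) \geq u(s,\eta(^s\omega)) - \varepsilon,
\end{equation*}
which is the desired inequality. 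The main obstacle is the selection step: because $u(s,\cdot)$ arises as a supremum (a projection), it is in general only lower semianalytic, so $\hat\alpha$ is merely universally measurable rather than Borel. This is harmless, since $\eta(^s\cdot)$ has a Borel law on $\mathbb{R}$ and all statements hold modulo $P$-null sets, but it forces both the analytic-measurability version of the selection theorem and the careful choice of a jointly measurable representative of $\hat\alpha$ so that $\alpha^\varepsilon$ is genuinely $\mathcal{P}^t$-measurable.
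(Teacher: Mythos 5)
You follow the same route as the paper: represent $\eta$ as a Borel map of the past path, get joint measurability of $(\alpha,x)\mapsto u^\alpha(s,x)$ from Theorem \ref{mesu}, perform a measurable selection, upgrade the universally measurable selector modulo null sets, and transfer it to a $\mathcal{P}^t$-measurable control by composition with the past (your choice to select in the state variable $x$ rather than in the past path ${}^s\omega$, as the paper does, is a harmless variant). However, your selection step, as written, fails. The Jankov--von Neumann theorem uniformizes \emph{analytic} sets, and the set
\begin{equation*}
\Gamma_\varepsilon=\{(x,\alpha)\,:\, u^\alpha(s,x)\geq u(s,x)-\varepsilon\}
=\{(x,\alpha)\,:\, u^\alpha(s,x)\geq u^{\alpha'}(s,x)-\varepsilon \ \ \forall \alpha'\in\mathcal{A}_s^s\}
\end{equation*}
is not analytic in general. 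Since $g$ is only Borel, $(x,\alpha)\mapsto u^\alpha(s,x)$ is Borel but not continuous in $\alpha$, so $u(s,\cdot)=\sup_{\alpha'}u^{\alpha'}(s,\cdot)$ is only upper semianalytic: its superlevel sets are projections of Borel sets, hence analytic, and nothing better. Consequently $\Gamma_\varepsilon$ is cut out by a universal quantifier over the Polish space $\mathcal{A}_s^s$ applied to a Borel set, i.e. it is co-analytic; co-analytic sets are precisely the class for which Jankov--von Neumann is unavailable (their uniformization is Kond\^o's theorem, which does not come with universally measurable selectors in ZFC). You do notice that $u(s,\cdot)$ is only semianalytic, but you misdiagnose the consequence: it does not merely downgrade the measurability of the selector $\hat\alpha$, it removes the hypothesis under which your selection theorem applies to $\Gamma_\varepsilon$ at all.

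The repair is exactly what the paper does: invoke Proposition 7.50 of \cite{BS}, which is engineered for the supremum structure $u(s,x)=\sup_\alpha u^\alpha(s,x)$ with $u^\alpha(s,x)$ jointly Borel (hence upper semianalytic). Its proof never forms $\Gamma_\varepsilon$: it partitions the $x$-space according to the approximate value of $u(s,\cdot)$ --- these pieces are differences of analytic sets, hence analytically measurable --- applies Jankov--von Neumann on each piece to the genuinely analytic set $\{(x,\alpha): u^\alpha(s,x)>c_k\}$, and glues the resulting selectors. With that substitution your argument becomes essentially the paper's proof; the only other point to tighten is your construction of the jointly measurable kernel: since the selector is only universally measurable, you should first replace it by a Borel map agreeing with it almost everywhere for the law of $\eta$ (this is exactly Lemma \ref{Cra} applied with $Q$ the image measure, as in the paper), and then obtain joint measurability in $(x,\omega',r)$, for instance via the orthonormal-basis expansion of $\mathbb{H}_s^2$ used in the paper, before composing with $\eta({}^s\omega)$ and $T^s$ to define $\alpha^\varepsilon$ and checking its $\mathcal{P}^t$-measurability.
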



\dproof  
 Without loss of generality, we may assume that $t=0$.  
We introduce the space 
$^s\Omega:=\{(\omega_r)_{0 \leq r \leq s}; \omega \in \Omega \}$, equipped with its Borelian $\sigma$-algebra denoted 
by ${\cal B}(^s\Omega)$,
and the probability measure $^sP$, which corresponds to the image of $P$  by $^s S: \Omega \rightarrow ^s\Omega;$ 
$\omega \mapsto (\omega_r)_{r \leq s}$.
The Hilbert space $\mathbb{H}_s^2$ of square-integrable predictable processes on $\Omega^s \times [s,T]$, equipped with the norm $\|\cdot\|_{\mathbb{H}_s^2}$ 
 is separable
 (see Lemma \ref{sepa}). 
Moreover, $\mathcal{A}_s^s$ is a closed subset of $\mathbb{H}^2_s$. Also, the space ${^s\Omega}$ of paths (RCLL) before $s$ is Polish for the Skorohod metric. 
Now, as seen above, since $\eta$ is $\mathcal{F}_s$-measurable, up to a $P$-null set, we can suppose that it is of the form
$\eta  \circ S^s$, where $\eta$ is ${\cal B}(^s\Omega)$-measurable.
Moreover, by Theorem \ref{mesu},  the map $(\tilde \omega, \alpha)$ $\mapsto$  
$u^{ \alpha}(s,\eta  (\tilde \omega))$ is  $ \mathcal{B} 
({^s}\Omega) \otimes \mathcal{B}(\mathcal{A}_s^s )$-measurable with respect to $(x, \alpha)$.
We can thus apply Proposition 7.50 in \cite{BS} 
 to the problem \eqref{aux}. Hence,
there exists a map  ${\underline \alpha}^{\varepsilon}:$ ${^s}\Omega \mapsto \mathcal{A}_s^s$\,; $\,\tilde \omega\mapsto {\underline \alpha}^{\varepsilon}(\tilde \omega, \cdot)$, which is  {\em universally} measurable, that is $ \mathcal{U}
({^s}\Omega) / \mathcal{B}(\mathcal{A}_s^s)$-measurable, and such that
\begin{equation*}
u(s,\eta (\tilde \omega)) \leq u^{ {\underline \alpha}^{\varepsilon}(\tilde \omega,\cdot)}(s, \eta (\tilde \omega))+\varepsilon  
\quad {\rm for\,\, all}\,\, \tilde \omega \in {^s}\Omega.
\end{equation*}  
Here, $\mathcal{U}
({^s}\Omega)$ denotes the {\em universal} $\sigma$-algebra on ${^s}\Omega$.
Let us now apply Lemma \ref{Cra} to $X= {^s}\Omega$, to 
 $E=\mathbb{H}_s^2$ and to probability $Q= P^s$. By definition of $\mathcal{U}
({^s}\Omega)$ (see e.g. \cite{BS}), we have $ \mathcal{U}({^s}\Omega) \subset \mathcal{B}_{
 Q} ({^s}\Omega),$ where $\mathcal{B}_{Q} ({^s}\Omega)$ denotes {\em the completion} 
of $\mathcal{B} ({^s}\Omega)$ with respect to $Q$. Hence, 
there exists a map \\
 ${\hat \alpha}^{\varepsilon}:$ 
${^s}\Omega \mapsto \mathcal{A}_s^s$\,; $\,\tilde \omega\mapsto {\hat \alpha}^{\varepsilon}(\tilde \omega, \cdot)$ which is  Borelian, that is $ \mathcal{B}
({^s}\Omega) / \mathcal{B}(\mathcal{A}_s^s)$-measurable, and such that
\begin{equation*}
{\hat \alpha}^{\varepsilon}(\tilde \omega,\cdot)= {\underline \alpha}^{\varepsilon}(\tilde \omega,\cdot)
 \quad {\rm for} \quad { ^s P}-{\rm almost\,\, every}\,\, \tilde \omega \in {^s}\Omega.
\end{equation*}  
Since ${\mathbb{H}_s^2}$ is a separable Hilbert space, for each $\tilde \omega$, we have 
${\hat \alpha}^{\varepsilon}_u(\tilde \omega, \omega)=\sum_{i}\beta^{i, \varepsilon}(\tilde \omega)
e^i_u(\omega)$ $dP(\omega) \otimes du$-a.s. , where
 $\beta^{i,\varepsilon}(\tilde \omega)=<{\hat \alpha}^{\varepsilon}(\tilde \omega,\cdot),e^{i}(\cdot)>_{\mathbb{H}_s^2}$ and $\{e^{i}, i \in \mathbb{N} \}$ is a countable orthonormal basis of $\mathbb{H}_s^2$. Note that $\beta^{i,\varepsilon}$ is Borelian, that is $ \mathcal{B}({^s}\Omega)/
  \mathcal{B}({\mathbb R})$-measurable.\\
  Let $\bar { \alpha}^{\varepsilon}:$ $^s \Omega \mapsto \mathcal{A}_s^s$\,; $\,\tilde \omega\mapsto \bar{ \alpha}^{\varepsilon}(\tilde \omega, \cdot)= \sum_{i}\beta^{i, \varepsilon}(\tilde \omega)e^i(\cdot)$. It is Borelian, that is $ \mathcal{B}
({^s}\Omega) / \mathcal{B}(\mathcal{A}_s^s)$-measurable.\\
We now define a process $\alpha^{\varepsilon}$ on $[0,T] \times \Omega$
by $ \alpha^{\varepsilon}_r(\omega):= \sum_{i}\beta^{i, \varepsilon}( S^s (\omega))e^i(\omega)$.
It remains to prove that it is  $\mathcal{P}$-measurable.
Note that $\beta^{i,\varepsilon}\circ S^s$ is $\mathcal{F}_s$-measurable by composition. 
Since the process $(e_u^{i})_{s \leq u \leq T}$  is $\mathcal{P}^{s}$-measurable, 
the process 
$(\beta^{i,\varepsilon}\circ S^s )\, e_u^{i}$ is $\mathcal{P}$-measurable. Indeed, if we take $e^i$ of the form $e^i_u=H \textbf{1}_{]r,T]}(u)$ with $r \geq s$ and $H$ a random variable $\mathcal{F}_r^s$-measurable, then the random variable $(\beta^{i,\varepsilon}\circ $$S ^s) \, H$ 
is $\mathcal{F}_r$-measurable and hence the process $(\beta^{i,\varepsilon}\circ $
$S ^s) \, H \textbf{1}_{]r,T]}$ is $\mathcal{P}$-measurable. The process
 $\alpha^{\varepsilon}$ is thus $\mathcal{P}$-measurable.\\ 
Note also that $\alpha^{\varepsilon}(\tilde \omega,T^s(\omega))= \sum_{i}\beta^{i, \varepsilon}(\tilde \omega)e^i(\tilde \omega, \omega)$. Now, we have  $e^i(\tilde \omega, T^s(\omega))= e^i (\omega)$ 
because $e^i (\omega)$ depends on $ \omega$ only through $T^s(\omega)$.
Hence, $\alpha^{\varepsilon}(\tilde \omega,T^s(\omega))= \bar{ \alpha}^{\varepsilon}(\tilde \omega, 
\omega)$, which completes the proof.
\fproof

\subsection{A Fatou lemma for reflected BSDEs}\label{FatouR}

We establish  a Fatou lemma  for reflected  BSDEs, where the limit involves both terminal condition and terminal time.
This result will be used  to prove a {\em super (resp. sub)--optimality principle} involving the l.s.c. (resp. u.s.c.) envelope of 
the value function $u$ (see Theorem \ref{IMP}). We first introduce some notation.

A function $f$ is said to be a {\em Lipschitz driver} if \\
$f: [0,T]  \times \Omega \times \R^2 \times L^2_\nu \rightarrow \R $
$(\omega, t,y, z, k(\cdot)) \mapsto  f(\omega, t,y, z, k(\cdot))  $
  is $ {\cal P} \otimes {\cal B}(\R^2)  \otimes {\cal B}(L^2_\nu) 
- $ measurable,  uniformly Lipschitz with respect to $y, z, k(\cdot)$ and such that $f(.,0,0,0) \in \H^2$.

A Lipschitz driver $f$ is said to satisfy Assumption~\ref{Royer} if the following holds:
\begin{assumption}\label{Royer} 
Assume that  $dP \otimes dt$-a.s\, for each $(y,z, k_1,k_2)$ $\in$ $ \mathbb{R}^2 \times (L^2_{\nu})^2$,
$$f( t,y,z, k_1)- f(t,y,z, k_2) \geq \langle \gamma_t^{y,z, k_1,k_2}  \,,\,k_1 - k_2 \rangle_\nu,$$ 
with
$
\gamma:  [0,T]  \times \Omega\times \mathbb{R}^2 \times  (L^2_{\nu})^2  \rightarrow  L^2_{\nu}\,; \, (\omega, t, y,z, k_1, k_2) \mapsto 
\gamma_t^{y,z,k_1,k_2}(\omega,.)
$, supposed to be
 ${\cal P } \otimes {\cal B}({\mathbb R}^2) \otimes  {\cal B}( (L^2_{\nu})^2 )$-measurable, uniformly bounded in $L^2_{\nu}$, and satisfying $ dP(\omega)\otimes dt \otimes d\nu(e)$-a.s.\,, for each $(y,z, k_1, k_2)$ $\in$ ${\mathbb R}^2 \times (L^2_{\nu})^2$, the inequality $\gamma_t^{y,z, k_1, k_2} (\omega, e)\geq -1$.
\end{assumption}

This assumption ensures  the comparison theorem for BSDEs  with jumps  (see \cite{16} Th 4.2).

Let $(  \eta_t)$ be a given  RCLL obstacle process in $\mathcal{S}^2$ and let $f$ be a given Lipschitz driver.
 In the following, we will 
consider the case when the terminal time 
  is  a stopping time $ \theta$ $ \in {\cal T}$ and the terminal condition is a random variable  $\xi$ in $ L^2({\cal F}_ \theta)$. 
In this case,  the solution, denoted $(Y_{., \theta}(\xi), Z_{., \theta}(\xi), k_{., \theta}(\xi))$, of the {\em reflected BSDEs associated with terminal stopping time} $\theta$, driver $f$, obstacle $(  \eta_s)_{ s < \theta}$, and terminal condition $\xi$  
 is defined  as the unique solution in $\mathcal{S}^2 \times \mathbb{H}^2 \times
\mathbb{H}^2_{\nu}$ of the reflected BSDE with  terminal time $T$, driver $f(t,y, z,k) {\bf 1}_ { \{ t \leq  \theta  \}  }$, 
 terminal condition $\xi$ and obstacle $ \eta_t {\bf 1}_{t < \theta} + \xi {\bf 1}_{t \geq \theta}$.
 Note that 
$Y_{t, \theta}(\xi) = \xi, Z_{t, \theta}(\xi) =0,k_{t, \theta}(\xi)=0 $ for $t \geq  \theta$.

We first prove a continuity property for reflected BSDEs where the limit involves both terminal condition and terminal time.

  \begin{proposition}[A continuity property for reflected BSDEs] \label{conty}
Let $T>0$.  Let $(  \eta_t)$ be an RCLL process in $\mathcal{S}^2.$ Let $f$ be a given Lipschitz driver. 
Let $(\theta^n)_{ n \in \mathbb{N}} $ be a non increasing sequence of 
stopping times in 
$\mathcal{T}$,  converging a.s. to  $\theta \in \mathcal{T}$ as 
$n$ tends to $\infty$. Let   $(\xi^{n})_{ n \in \mathbb{N}} $ be a sequence of random variables such that 
$ \mathbb{E}[\sup_{n}( \xi^n)^2]< + \infty$, and  for each $n$,
$\xi^{n}$ is ${\mathcal F}_{\theta^{n}}$-measurable. Suppose  that $\xi^{n}$ converges a.s. to an ${\mathcal F}_{\theta}$-measurable random variable $\xi$ as 
$n$ tends to $\infty$. Suppose that 
\begin{equation}\label{ass}
  \eta_{\theta} \leq \xi \quad {\rm a.s.}
\end{equation}
Let $Y_{.,\theta^{n}}(\xi^n )$; $Y_{.,\theta}(\xi )$ be the solutions of the reflected BSDEs associated with driver $f$, obstacle $(  \eta_s)_{ s < \theta^n}$ (resp. $(  \eta_s)_{ s < \theta}$) , terminal time $\theta^n$ (resp. $\theta$), terminal condition $\xi^n$ (resp. $\xi$). We have
 $$Y_{0,\theta}(\xi ) = \lim_{n \rightarrow + \infty} Y_{0,\theta^{n}}(\xi^n ) \quad a.s.$$
When for each $n$,  $\theta_n= \theta$ a.s.\,, the result still holds without Assumption \eqref{ass}.

 \end{proposition}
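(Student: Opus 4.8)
The plan is to reduce everything to a fixed terminal time $T$ by using the built-in convention for reflected BSDEs with stopping-time horizons, and then to apply a standard a priori estimate together with the comparison theorem. Recall that $Y_{\cdot,\theta^n}(\xi^n)$ is, by definition, the solution of the reflected BSDE on $[0,T]$ with driver $f^n(t,y,z,k):=f(t,y,z,k)\mathbf{1}_{\{t\le\theta^n\}}$, terminal condition $\xi^n$, and obstacle $\eta_t\mathbf{1}_{t<\theta^n}+\xi^n\mathbf{1}_{t\ge\theta^n}$; similarly for the limit object with $\theta,\xi$. Thus the two quantities $Y_{0,\theta^n}(\xi^n)$ and $Y_{0,\theta}(\xi)$ are initial values of genuine reflected BSDEs on a common horizon $T$, and the task is purely a stability statement as $(f^n,\xi^n,\eta^n)\to(f,\xi,\eta)$ in the appropriate sense, where $\eta^n_t:=\eta_t\mathbf{1}_{t<\theta^n}+\xi^n\mathbf{1}_{t\ge\theta^n}$.

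First I would record the a priori estimate for reflected BSDEs (from the Appendix of \cite{DQS}, already invoked in the proof of Theorem \ref{mesu}): the difference of solutions at time $0$ is controlled, up to a constant depending only on $T$ and the Lipschitz constant, by the $\mathbb H^2$-distance of the drivers and the $\mathcal S^2$-distance of the obstacles together with the $L^2$-distance of the terminal data. So it suffices to show that $\xi^n\to\xi$ in $L^2(\mathcal F_T)$, that $f^n\to f$ (as processes) in $\mathbb H^2$, and that the obstacle processes $\eta^n\to\eta\mathbf{1}_{\cdot<\theta}+\xi\mathbf{1}_{\cdot\ge\theta}$ in $\mathcal S^2$. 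Each of these is a dominated-convergence argument: the domination $\mathbb E[\sup_n(\xi^n)^2]<\infty$ handles the terminal condition and the $\mathbf{1}_{t\ge\theta^n}\xi^n$ part of the obstacle; the monotone convergence $\theta^n\downarrow\theta$ gives $\mathbf{1}_{\{t\le\theta^n\}}\to\mathbf{1}_{\{t\le\theta\}}$ pointwise $dP\otimes dt$-a.s., and since $f(\cdot,0,0,0)\in\mathbb H^2$ and $f$ is Lipschitz the driver difference is dominated.

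The main obstacle is the convergence of the obstacle processes in $\mathcal S^2$ at the junction $t=\theta$, because $\eta$ need not be continuous there. On $\{t<\theta\}$ both obstacles equal $\eta_t$; the discrepancy lives on the shrinking random set $\{\theta\le t<\theta^n\}$, where $\eta^n_t=\xi^n$ while the limiting obstacle is $\eta_t$. Here Assumption \eqref{ass}, namely $\eta_\theta\le\xi$ a.s., is precisely what is needed: it guarantees that raising the obstacle to $\xi^n$ just to the right of $\theta$ does not destroy the upward ordering used in the comparison theorem, so that the monotone structure lets one sandwich $Y_{0,\theta^n}(\xi^n)$ and control the limit from both sides. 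Concretely, I would use the comparison theorem (valid under Assumption \ref{Royer}, cited after that assumption) to bound $Y_{0,\theta^n}(\xi^n)$ between solutions with comparable data and then squeeze; the right-continuity of $\eta$ ensures $\eta_{\theta^n}\to\eta_\theta$ and controls the supremum estimate on the junction set.

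Finally, when $\theta^n=\theta$ a.s. for every $n$ the obstacle processes coincide identically with $\eta_t\mathbf{1}_{t<\theta}+\xi^n\mathbf{1}_{t\ge\theta}$, so the junction set is empty and only $\xi^n\to\xi$ in $L^2$ is required; the a priori estimate then yields the conclusion directly, with no ordering hypothesis. This is why Assumption \eqref{ass} can be dropped in that case, and I would state this as an immediate consequence of the estimate once the general argument is in place.
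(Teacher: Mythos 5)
Your reduction to the fixed horizon $T$ and your appeal to the a priori estimate (Lemma \ref{oubli1}) match the paper's starting point, and you correctly locate the crux: on the junction set $\{\theta \le t < \theta^n\}$ the two obstacles differ by $|\eta_t - \xi|$ (note you state it backwards: there $\eta^n_t = \eta_t$ since $t<\theta^n$, while the limiting obstacle equals $\xi$ since $t\ge\theta$), and this sup-distance tends to $|\eta_\theta - \xi|$ by right-continuity, which is \emph{not} zero in general since \eqref{ass} is only an inequality. So your claim that it suffices to prove convergence of the obstacles in $\mathcal{S}^2$ cannot be carried out as stated. The problem is your proposed fix. You want to sandwich $Y_{0,\theta^n}(\xi^n)$ between solutions with comparable data via the comparison theorem, but (i) the comparison theorem for (reflected) BSDEs with jumps requires Assumption \ref{Royer} on the driver, and Proposition \ref{conty} is stated for an \emph{arbitrary} Lipschitz driver $f$ --- Assumption \ref{Royer} is deliberately invoked only in the Fatou lemma (Proposition \ref{fatou2}), which is where monotonicity is actually used; and (ii) even granting comparison, you never specify the two bounding solutions, and constructing a comparable object whose distance to $Y_{0,\theta^n}(\xi^n)$ is controllable is precisely the missing idea, not a routine squeeze.

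The paper's resolution avoids comparison altogether. It applies the estimate of Lemma \ref{oubli1} to $Y^1 = Y_{.,\theta^n}(\xi^n)$ and to the solution $Y^2$ of the reflected BSDE whose obstacle is \emph{not} the actual obstacle of $Y_{.,\theta}(\xi)$ but the modified process $\eta^2_t = \eta_t {\bf 1}_{t<\theta} + \eta_\theta {\bf 1}_{\theta \le t < \theta^n} + \xi {\bf 1}_{\theta^n \le t <T}$, i.e.\ the obstacle is frozen at the value $\eta_\theta$ on the junction set. Assumption \eqref{ass} is used exactly once, to identify $Y^2_\cdot = Y_{.,\theta}(\xi)$: after $\theta$ the driver $f {\bf 1}_{t \le \theta}$ vanishes and the solution is frozen at $\xi$, which dominates the modified obstacle $\eta_\theta$ precisely because $\eta_\theta \le \xi$ a.s., so the reflection never acts and uniqueness gives the identification. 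With this choice the obstacle term in the estimate becomes $\| \sup_{\theta \le s < \theta^n} |\eta_s - \eta_\theta| \|_{L^2}$, which tends to $0$ by right-continuity of $\eta$, while the terminal and driver terms tend to $0$ by dominated convergence (there your argument is fine, since $(Y^2,Z^2,k^2)=(\xi,0,0)$ after $\theta$). This ``freeze the obstacle at $\eta_\theta$ and identify the solution'' step is the essential content of the proof, and it is absent from your plan; without it (or some genuinely equivalent device) your argument does not close.
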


By similar  arguments as in the Brownian case (see e.g.  \cite{10}), one can prove the following estimate on reflected BSDEs, which will be used in the proof of the above Proposition.
  \begin{lemma}\label{oubli1}
Let $\xi^{1}, \xi^{2} \in L^2({\cal F}_T)$ and $(\eta_t^{1})$, $(\eta_t^{2})\in \mathcal{S}^2$. 
Let $f^1, f^2$ be Lipschitz drivers with Lipschitz constant 
$C>0$. For $i=1,2$, let $(Y^i,Z^{i},k^{i}, A^{i})$ be the solution of the reflected BSDE 
with driver $f^i$, 
terminal time $T$, obstacle $(\eta_t^i)$ and terminal condition $\xi^i$.  
For $s \in [0,T]$, let 
$\overline{Y}_s:=Y_s^1-Y_s^2$, $\overline{\eta}_s:=\eta_s^1-\eta_s^2$, $\overline{\xi}:=
\xi^1-\xi^2$ and
 $\overline{f}(s):=f^1(s,Y_s^2,Z_s^2,k_s^2)-f(s,Y_s^2,Z_s^2,k_s^2)$.  Then, we have
%
\begin{equation}\label{eqA.2}
\| \overline{Y}\|_{{\cal S}^2}^2  \leq K \left(
 {\mathbb E}[\overline{\xi}^2]+{\mathbb E}[\int_0^T\overline{f}^2(s)ds]\right)+ \phi  \,\, \| \sup_{0\leq s < T}
 |\overline{\eta}_s| \|_{L^2},
\end{equation}
where the constant $K$ is universal, that is depends only on the Lipschitz constant $C$ and
$T$, and where the constant 
$\phi$ depends only on $C,T$, $\| \eta^i \|_{{\cal S}^2}$, $\| \xi^i \|_{L^2}$ and 
$\| {f}^i(s,0,0,0)\|_{\H^2}$, $i=1,2$.
\end{lemma}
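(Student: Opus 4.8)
The plan is to obtain \eqref{eqA.2} by applying Itô's formula to the squared difference $\overline Y^2$ and then controlling the contribution of the two reflecting processes $A^1,A^2$, which is the only ingredient distinguishing this bound from the classical a priori estimate for non-reflected BSDEs with jumps.

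First I would subtract the two reflected BSDEs: the process $\overline Y$ solves on $[0,T]$ a BSDE with terminal condition $\overline\xi$, finite-variation term $d\overline A_r:=dA^1_r-dA^2_r$, martingale part driven by $\overline Z$ and $\overline k$, and generator $\Delta f_r:=f^1(r,Y^1_r,Z^1_r,k^1_r)-f^2(r,Y^2_r,Z^2_r,k^2_r)$. Writing $\Delta f_r=\big(f^1(r,Y^1_r,Z^1_r,k^1_r)-f^1(r,Y^2_r,Z^2_r,k^2_r)\big)+\overline f(r)$ and invoking the Lipschitz property of $f^1$, the first bracket is bounded in absolute value by $C(|\overline Y_r|+|\overline Z_r|+\|\overline k_r\|_\nu)$. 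Applying Itô's formula to $e^{\beta r}\overline Y_r^2$ on $[t,T]$ (taking the usual care with the jumps of the Poisson martingale and of $A^{i,d}$, and choosing $\beta$ large relative to $C$) and taking conditional expectation given $\mathcal F_t$, the terminal term yields $\mathbb E[e^{\beta T}\overline\xi^2\mid\mathcal F_t]$, the Lipschitz part of the generator is absorbed by Young's inequality---a fraction of $\int_t^T e^{\beta r}(|\overline Z_r|^2+\|\overline k_r\|_\nu^2)dr$ being moved to the left-hand side and the $\overline Y^2$ remainder dominated by $\beta\int_t^T e^{\beta r}\overline Y_r^2 dr$---while the cross term $2\overline Y_r\overline f(r)$ produces $\mathbb E[\int_t^T e^{\beta r}\overline f(r)^2 dr]$.

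The main obstacle is the reflection term $2\int_t^T e^{\beta r}\overline Y_{r^-}\,d\overline A_r$. Here I would exploit the Skorokhod minimality conditions of \eqref{4.4} (for both the continuous and the discontinuous parts of $A^i$): since $dA^1$ charges only $\{Y^1_{r^-}=\eta^1_{r^-}\}$ while $Y^2\geq\eta^2$, one has $\overline Y_{r^-}\,dA^1_r=(\eta^1_{r^-}-Y^2_{r^-})\,dA^1_r\leq\overline\eta_{r^-}\,dA^1_r$, and symmetrically $-\overline Y_{r^-}\,dA^2_r\leq-\overline\eta_{r^-}\,dA^2_r$; hence, bounding $|\overline\eta_{r^-}|$ by $\sup_r|\overline\eta_r|$, the reflection term is dominated by $2e^{\beta T}\sup_{r}|\overline\eta_r|\,(A^1_T+A^2_T)$. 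Taking expectations and using the Cauchy--Schwarz inequality gives precisely the contribution $\phi\,\|\sup_{0\le s<T}|\overline\eta_s|\|_{L^2}$, the constant $\phi$ absorbing $2e^{\beta T}\|A^1_T+A^2_T\|_{L^2}$. By the standard a priori estimates for reflected BSDEs (as in the Brownian case, see \cite{10}, or the Appendix of \cite{DQS}), $\|A^i_T\|_{L^2}$ is controlled by $C,T,\|\eta^i\|_{\mathcal S^2},\|\xi^i\|_{L^2}$ and $\|f^i(\cdot,0,0,0)\|_{\H^2}$, which matches the asserted dependence of $\phi$.

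Finally, to reach the $\mathcal S^2$-norm of $\overline Y$ rather than a pointwise bound, I would return to the Itô expansion, take the supremum over $t\in[0,T]$ before taking expectation, and estimate the remaining stochastic integrals by the Burkholder--Davis--Gundy inequality, once more absorbing a fraction of the martingale brackets into the left-hand side. Collecting the quadratic contributions into the universal constant $K$ (depending only on $C$ and $T$) and the reflection contribution into $\phi$ then yields \eqref{eqA.2}. Apart from the treatment of the $d\overline A$ term, the argument is the classical linearization-and-Young estimate, which is why it can be carried out exactly as in the Brownian reference \cite{10}.
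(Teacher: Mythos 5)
Your proof is correct and is essentially the paper's own argument: the paper does not write out a proof of Lemma \ref{oubli1}, but instead defers to ``similar arguments as in the Brownian case'' of \cite{10}, and that classical argument is exactly what you carry out --- It\^o's formula applied to the squared difference, linearization of the driver with Young absorption, and the Skorokhod minimality conditions to replace $\overline Y_{r^-}\,d\overline A_r$ by $\overline\eta_{r^-}\,d\overline A_r$, followed by Cauchy--Schwarz against $\|A^1_T+A^2_T\|_{L^2}$ (itself controlled by the a priori estimates), which is precisely the step producing the first-power obstacle term $\phi\,\|\sup_{0\le s<T}|\overline\eta_s|\|_{L^2}$ rather than a squared one. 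Your final BDG step to pass from a pointwise bound to the $\mathcal{S}^2$-norm is also the standard completion of that argument, so the proposal matches the intended proof in full.
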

%
\noindent {\bf Proof of Proposition \ref{conty}}. Let $n \in {\mathbb N}$.
 We apply \eqref{eqA.2} with 
 $f^1= f {\bf 1}_{t \leq \theta^n}$, $f^2= f {\bf 1}_{t \leq \theta}$, $\xi^1 = \xi^n$, $\xi^2 = \xi$, $\eta_t^1 = \eta_t {\bf 1}_{t < \theta ^n}
 + \xi^n {\bf 1}_{\theta ^n \leq t < T}$ and 
 $\eta^2_t = \eta_t {\bf 1}_{t < \theta}+ \eta_{\theta} {\bf 1}_{ \theta \leq t < \theta ^n } + \xi {\bf 1}_{\theta ^n \leq t < T}$. 
 We have 
  $Y_\cdot^1= Y_{.,\theta^n}(\xi^n )$ a.s. 
 Moreover,  since by assumption $\eta_{\theta} \leq \xi$ a.s.\,,
we have $Y^2_\cdot = Y_{.,\theta}(\xi )$ a.s. 
Note that $(Y^2_t, Z^2_t, k^2_t) = (\xi,0,0)$ a.s. on $\{t \geq \theta\}$.
 We thus obtain
  \begin{equation}\label{secundo}
| Y_{0,\theta^n}(\xi^n )-  Y_{0,\theta}(\xi ) |^2\leq K \left(
 {\mathbb E}[  (\xi^n  - \xi) ^2]+{\mathbb E}[\int_{\theta}^{\theta^n}{f}^2(s, \xi, 0,0)ds] \right)+ \phi  \,\, \| \sup_{\theta \leq s
  < \theta^n}
 |{\eta}_s- \eta_{\theta}| \|_{L^2},
\end{equation}
where the constant $K$ depends only on the Lipschitz constant $C$ of $f$ and
the terminal time $T$, and where the constant 
$\phi$ depends only on $C$, $T$, $\| \eta \|_{{\cal S}^2}$, $\sup_n \| \xi^n \|_{L^2}$  and 
$\| {f}(s,0,0,0)\|_{\H^2}$. 
Since the obstacle $ (\eta_t)$ is right-continuous and $\theta^n \downarrow \theta$ a.s.\,, we have
$\lim_{n \rightarrow + \infty} \| \sup_{\theta \leq s
  \leq  \theta^n}
 |{\eta}_s- \eta_{\theta}| \|_{L^2} =0.$ 
 The right member of  \eqref{secundo} thus tends to $0$ as $n$ tends to $+ \infty$.
The result follows.
\fproof

\begin{remark}
 Compared with the case of non reflected BSDEs (see Proposition A.6 in \cite{16}),  there is an extra difficulty due to the presence 
 of the obstacle (and the variation of the terminal time). The additional assumption  \eqref{ass} on the obstacle is here required to obtain the result.
\end{remark}

 \noindent Using Proposition \ref{conty}, we derive a Fatou lemma in the reflected case, where the limit involves both terminal condition and terminal time. 

\begin{proposition}[A Fatou lemma for reflected BSDEs]\label{fatou2}
Let $T>0$.  Let $(  \eta_t)$ be an RCLL process in $\mathcal{S}^2.$ Let $f$ be a  Lipschitz driver satisfying Assumption \ref{Royer}. 
Let $(\theta^n)_{ n \in \mathbb{N}} $ be a non increasing sequence of stopping times in 
$\mathcal{T}$,  converging a.s. to  $\theta \in \mathcal{T}$ as 
$n$ tends to $\infty$. Let  $(\xi^{n})_{ n \in \mathbb{N}} $ be a sequence of random variables such that 
$ \mathbb{E}[\sup_{n}( \xi^n)^2]< + \infty$, and  for each $n$,
$\xi^{n}$ is ${\mathcal F}_{\theta^{n}}$-measurable.\\
Let $Y_{.,\theta^{n}}(\xi^n )$ ; $Y_{.,\theta}(\liminf_{n \rightarrow + \infty} \xi^{n} )$  and $Y_{.,\theta}(\limsup_{n \rightarrow + \infty} \xi^{n} )$ be the solution(s) of the reflected BSDE(s) associated with driver $f$, obstacle $(  \eta_s)_{ s < \theta^n}$ (resp. $(  \eta_s)_{ s < \theta}$) , terminal time $\theta^n$ (resp. $\theta$), terminal condition $\xi^n$ (resp.  $\liminf_{n \rightarrow + \infty} \xi^{n}$ and $\limsup_{n \rightarrow + \infty} \xi^{n}$).\\
Suppose that
\begin{equation}\label{ij}
\liminf_{n \rightarrow + \infty} \xi^{n} \geq   \eta_{\theta} \quad ({\rm resp.}\quad  \limsup_{n \rightarrow + \infty} \xi^{n} \geq   \eta_{\theta}) 
\quad {\rm a.s.}
\end{equation}   
 $${\rm then}\quad \quad Y_{0,\theta}(\liminf_{n \rightarrow + \infty} \xi^{n} ) \leq \liminf_{n \rightarrow + \infty} Y_{0,\theta^{n}}(\xi^n ) \quad \left({\rm resp.}\quad Y_{0,\theta}(\limsup_{n \rightarrow + \infty} \xi^{n} ) \geq \limsup_{n \rightarrow + \infty} 
 Y_{0,\theta^{n}}(\xi^n ) \right).
$$
When for each $n$,  $\theta_n= \theta$ a.s.\,, the result still holds without Assumption \eqref{ij}.
  \end{proposition}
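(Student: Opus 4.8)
The plan is to reduce the statement to the continuity property of Proposition~\ref{conty}, by squeezing the terminal conditions between monotone, suitably measurable approximations and invoking the comparison theorem for reflected BSDEs (valid under Assumption~\ref{Royer}). I treat the $\liminf$ inequality in detail; the $\limsup$ one follows by the dual construction. Set $\xi^* := \liminf_{n\to+\infty}\xi^n$ and, for each $n$, define the diagonal approximation $\zeta^n := \inf_{k\geq n}\xi^k$. The crucial observation is a measurability one: since $(\theta^n)$ is nonincreasing, for every $k\geq n$ one has $\theta^k\leq\theta^n$, hence $\mathcal{F}_{\theta^k}\subseteq\mathcal{F}_{\theta^n}$; as each $\xi^k$ is $\mathcal{F}_{\theta^k}$-measurable, the infimum $\zeta^n$ is $\mathcal{F}_{\theta^n}$-measurable. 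Moreover $\zeta^n\uparrow\xi^*$ a.s., and by right-continuity of the filtration $\bigcap_n\mathcal{F}_{\theta^n}=\mathcal{F}_\theta$, so that the limit $\xi^*$ is $\mathcal{F}_\theta$-measurable; this is precisely what makes $Y_{0,\theta}(\xi^*)$ well defined. Finally $|\zeta^n|\leq\sup_k|\xi^k|$, whence $\mathbb{E}[\sup_n(\zeta^n)^2]<\infty$.

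First I would apply the comparison theorem for reflected BSDEs to the two problems sharing the same terminal time $\theta^n$, the same driver $f$ and the same obstacle $\eta$, but with terminal conditions $\xi^n\geq\zeta^n$: this gives $Y_{0,\theta^n}(\xi^n)\geq Y_{0,\theta^n}(\zeta^n)$ for every $n$, and therefore
\begin{equation*}
\liminf_{n\to+\infty}Y_{0,\theta^n}(\xi^n)\ \geq\ \liminf_{n\to+\infty}Y_{0,\theta^n}(\zeta^n).
\end{equation*}
Then I would pass to the limit in the right-hand side via Proposition~\ref{conty}: the sequence $(\zeta^n)$ satisfies all its hypotheses ($\theta^n\downarrow\theta$; each $\zeta^n$ is $\mathcal{F}_{\theta^n}$-measurable with $\zeta^n\to\xi^*$ a.s.; and $\mathbb{E}[\sup_n(\zeta^n)^2]<\infty$), while the obstacle hypothesis~\eqref{ass} of that proposition, namely $\eta_\theta\leq\xi^*$, is exactly Assumption~\eqref{ij}. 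Consequently $Y_{0,\theta^n}(\zeta^n)\to Y_{0,\theta}(\xi^*)$, which yields $Y_{0,\theta}(\liminf_n\xi^n)\leq\liminf_n Y_{0,\theta^n}(\xi^n)$.

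For the $\limsup$ inequality I would run the symmetric argument with $\mu^n:=\sup_{k\geq n}\xi^k$, which is again $\mathcal{F}_{\theta^n}$-measurable, satisfies $\mu^n\geq\xi^n$ and decreases a.s. to $\limsup_n\xi^n$; comparison now gives $Y_{0,\theta^n}(\xi^n)\leq Y_{0,\theta^n}(\mu^n)$, and Proposition~\ref{conty}, whose obstacle hypothesis is now the inequality $\limsup_n\xi^n\geq\eta_\theta$ from~\eqref{ij}, yields $Y_{0,\theta^n}(\mu^n)\to Y_{0,\theta}(\limsup_n\xi^n)$. When $\theta_n=\theta$ a.s.\ for all $n$, the terminal time is fixed and the last assertion of Proposition~\ref{conty} applies without~\eqref{ass}, so no analogue of~\eqref{ij} is required. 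The point demanding the most care is the choice of these approximating sequences: they must simultaneously be $\mathcal{F}_{\theta^n}$-measurable (so that $Y_{0,\theta^n}(\cdot)$ is defined and comparison applies), converge monotonically to an $\mathcal{F}_\theta$-measurable limit, and have that limit satisfy the obstacle condition~\eqref{ass}. It is exactly the diagonal infimum (resp.\ supremum) that reconciles these three requirements while matching~\eqref{ij}, and verifying this reconciliation is the heart of the proof.
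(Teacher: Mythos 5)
Your proof is correct and follows essentially the same route as the paper's: the diagonal approximation $\inf_{p\geq n}\xi^p$, monotonicity (comparison) of reflected BSDEs in the terminal condition, and then passage to the limit via the continuity property of Proposition~\ref{conty} under Assumption~\eqref{ij}. The only difference is that you spell out the measurability and integrability checks (in particular the $\mathcal{F}_{\theta^n}$-measurability of the diagonal infimum and the $\mathcal{F}_\theta$-measurability of the limit) that the paper leaves implicit, and you write out the $\limsup$ case which the paper dismisses as symmetric.
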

 
  
  \dproof
We present only the proof of the first inequality, since the second one is obtained by similar arguments.
For all $n$, we have by the monotonicity of reflected BSDEs with respect to terminal condition,
$Y_{0,\theta^{n}} ( \inf_{p \geq n}  \xi^{p}) \leq Y_{0,\theta^{n}} (\xi^{n}).$
We derive that
 $$  \liminf_{n \rightarrow + \infty}
 Y_{0,\theta^{n}} (\xi^{n}) \geq \liminf_{n \rightarrow + \infty}Y_{0,\theta^{n}} ( \inf_{p \geq n}   \xi^{p}) =
Y_{0,\theta}(\liminf_{n \rightarrow + \infty} \xi^{n} ),$$
where the last equality follows from Assumption \eqref{ij} together with Proposition \ref{conty}.
\fproof



\subsection{A {\em weak} dynamic programming principle}



We  will now provide a {\em weak} dynamic programming principle, that is both a  (weak) {\em sub- and  super-optimality principle of dynamic programming}, involving respectively  the 
upper semicontinuous envelope  $u^*$ and the lower semicontinuous envelope  $u_*$ of the value function $u$, defined 
by
$$
u^*(t,x) := \limsup_{(t',x') \rightarrow (t,x)} u(t',x'); \quad 
 u_*(t,x) := \liminf_{(t',x') \rightarrow (t,x)} u(t',x')  \quad \forall (t, x) \in [0,T] \times {\mathbb R}.$$
 We now define the maps $\bar u^*$ and $\bar u_*$ for each $(t, x) \in [0,T] \times {\mathbb R}$ by
 $$
\bar u^*(t,x) :=  u^*(t,x) {\bf 1}_{t < T} + g(x) {\bf 1}_{t =  T}; \quad
\bar u_*(t,x) := u_*(t,x) {\bf 1}_{t < T} + g(x) {\bf 1}_{t =  T}. 
$$
 Note that the functions $\bar u^*$ and $\bar u_*$ are Borelian. We have  $\bar u_* \leq u \leq \bar u^*$ and $\bar u_* (T,.)= u (T,.)=  \bar u^*(T,.)= g(.)$. Note that $ \bar u^*$ (resp. $\bar u_*$)  is not necessarily 
  upper (resp. lower) semicontinuous on $ [0,T] \times {\mathbb R}$, since the  terminal reward  $g$ is only Borelian. 
 

To prove the {\em weak} dynamic programming principle, we will use the splitting properties  (Th. \ref{egal}), the existence of   $\varepsilon$-optimal controls (Th. \ref{select}) and the Fatou lemma for RBSDEs (Prop. \ref{fatou2}).
\begin{theorem}[A {\em weak} dynamic programming principle]\label{IMP}
The value function $u$  satisfies the following weak {\em sub--optimality principle of dynamic programming}: \\
for each $t \in [0,T]$ and for each stopping time $\theta \in \mathcal{T}^t_{t},$ we have
%
%
\begin{equation}\label{DPP}
u(t,x) \leq {\sup_{\alpha \in \mathcal{A}_t^t}\sup_{\tau \in \mathcal{T}_{t}^t}}
\mathcal{E}_{t, \theta \wedge \tau }^{\alpha,t,x}\left[h(\tau, X_{\tau}^{\alpha,t,x})\textbf{1}_{\tau <\theta}+\bar u^*(\theta,X_{\theta}^{\alpha,t,x})\textbf{1}_{\tau \geq\theta}\right], 
\end{equation}
Moreover, the  following weak {\em super--optimality principle of dynamic programming} holds: \\
for each $t \in [0,T]$ and for each stopping time $\theta \in \mathcal{T}^t_{t},$ we have 
\begin{equation}\label{DPP2}
u(t,x) \geq {\sup_{\alpha \in \mathcal{A}_t^t}\sup_{\tau \in \mathcal{T}_{t}^t}}
\mathcal{E}_{t, \theta \wedge \tau }^{\alpha,t,x}\left[h(\tau, X_{\tau}^{\alpha,t,x})\textbf{1}_{\tau <\theta}+ \bar u_*(\theta,X_{\theta}^{\alpha,t,x})\textbf{1}_{\tau \geq\theta}\right]. 
\end{equation}
\end{theorem}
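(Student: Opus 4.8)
The plan is to reduce both inequalities to the flow (consistency) property of the reflected BSDE, combined with the splitting, selection and Fatou results of the previous subsections. For a terminal stopping time $\theta \in \mathcal{T}_t^t$, an obstacle $h(\cdot, X^{\alpha,t,x}_\cdot)$ and a terminal condition $\xi \in L^2(\mathcal{F}_\theta^t)$ with $\xi \geq h(\theta, X_\theta^{\alpha,t,x})$, I write $Y_{t,\theta}^{\alpha,t,x}(\xi)$ for the value at time $t$ of the reflected BSDE with terminal time $\theta$, driver $f^{\alpha,t,x}$ and obstacle $h(\cdot, X^{\alpha,t,x})$; by the characterization recalled in Section \ref{sec2}, this equals $\sup_{\tau \in \mathcal{T}_t^t} \mathcal{E}_{t,\tau\wedge\theta}^{\alpha,t,x}[h(\tau, X_\tau^{\alpha,t,x})\mathbf{1}_{\tau<\theta} + \xi\mathbf{1}_{\tau\geq\theta}]$. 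Since $u \geq \bar h$ and $h$ is continuous in $t$, both envelopes dominate the obstacle, i.e. $\bar u_*(\theta, X_\theta^{\alpha,t,x}) \geq h(\theta, X_\theta^{\alpha,t,x})$ and $\bar u^*(\theta, X_\theta^{\alpha,t,x}) \geq h(\theta, X_\theta^{\alpha,t,x})$ a.s.; this legitimises the use of $\bar u_*$ and $\bar u^*$ as admissible terminal conditions above.

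For the \emph{sub-optimality principle} \eqref{DPP}, I start from $u(t,x) = \sup_{\alpha \in \mathcal{A}_t^t} Y_t^{\alpha,t,x}$ (Theorem \ref{representation}) and apply the flow property of the reflected BSDE at $\theta$,
\begin{equation*}
Y_t^{\alpha,t,x} = \sup_{\tau \in \mathcal{T}_t^t} \mathcal{E}_{t,\tau\wedge\theta}^{\alpha,t,x}\big[\bar h(\tau, X_\tau^{\alpha,t,x})\mathbf{1}_{\tau<\theta} + Y_\theta^{\alpha,t,x}\mathbf{1}_{\tau\geq\theta}\big].
\end{equation*}
By the splitting identity \eqref{abcd} (extended from a deterministic time to the stopping time $\theta$ by the discretisation described below), $Y_\theta^{\alpha,t,x} = u^{\alpha(\tilde\omega,\cdot)}(\theta, X_\theta^{\alpha,t,x}) \leq u(\theta, X_\theta^{\alpha,t,x}) \leq \bar u^*(\theta, X_\theta^{\alpha,t,x})$ a.s. Using the monotonicity of $\mathcal{E}^{\alpha,t,x}$ with respect to the terminal condition, and the fact that $\bar h(\tau, X_\tau^{\alpha,t,x})\mathbf{1}_{\tau<\theta} = h(\tau, X_\tau^{\alpha,t,x})\mathbf{1}_{\tau<\theta}$ (since $\tau < \theta \leq T$), I bound $Y_t^{\alpha,t,x}$ above by the right-hand side of \eqref{DPP} for the given $\alpha$; taking the supremum over $\alpha \in \mathcal{A}_t^t$ gives \eqref{DPP}.

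For the \emph{super-optimality principle} \eqref{DPP2}, I fix $\alpha \in \mathcal{A}_t^t$ and $\theta$, and must show $Y_{t,\theta}^{\alpha,t,x}(\bar u_*(\theta, X_\theta^{\alpha,t,x})) \leq u(t,x)$. I first discretise $\theta$ by a nonincreasing sequence $\theta_n \downarrow \theta$ taking finitely many values in $[t,T]$. For each value $s$ of $\theta_n$, Theorem \ref{select} provides, for $\varepsilon>0$, a measurable family of $\varepsilon$-optimal controls for the problem started at $s$ from the random state $X_s^{\alpha,t,x}$; concatenating $\alpha$ on $[t,\theta_n]$ with these controls on $[\theta_n,T]$, and invoking the splitting property (Theorem \ref{egal}) together with the measurability of $(\alpha,x)\mapsto u^\alpha(s,x)$ (Theorem \ref{mesu}) to check admissibility, I build $\gamma_n^\varepsilon \in \mathcal{A}_t^t$ whose value satisfies $Y_{\theta_n}^{\gamma_n^\varepsilon,t,x} \geq u(\theta_n, X_{\theta_n}^{\alpha,t,x}) - \varepsilon \geq \bar u_*(\theta_n, X_{\theta_n}^{\alpha,t,x}) - \varepsilon$. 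Since $u(t,x) \geq u^{\gamma_n^\varepsilon}(t,x) = Y_t^{\gamma_n^\varepsilon,t,x}$ and $\gamma_n^\varepsilon$ coincides with $\alpha$ on $[t,\theta_n]$, the flow property and monotonicity give $u(t,x) \geq Y_{t,\theta_n}^{\alpha,t,x}(\bar u_*(\theta_n, X_{\theta_n}^{\alpha,t,x}) - \varepsilon)$. Letting $\varepsilon \to 0$ (continuity of the reflected BSDE in its terminal condition) removes the $\varepsilon$, and letting $n \to \infty$ I pass to the terminal time $\theta$ by Proposition \ref{fatou2}: since $u_*$ is lower semicontinuous and $(\theta_n, X_{\theta_n}^{\alpha,t,x}) \to (\theta, X_\theta^{\alpha,t,x})$ a.s. by right-continuity, $\liminf_n \bar u_*(\theta_n, X_{\theta_n}^{\alpha,t,x}) \geq \bar u_*(\theta, X_\theta^{\alpha,t,x}) \geq h(\theta, X_\theta^{\alpha,t,x})$, so hypothesis \eqref{ij} holds and
\begin{equation*}
u(t,x) \geq \liminf_n Y_{t,\theta_n}^{\alpha,t,x}\big(\bar u_*(\theta_n, X_{\theta_n}^{\alpha,t,x})\big) \geq Y_{t,\theta}^{\alpha,t,x}\big(\bar u_*(\theta, X_\theta^{\alpha,t,x})\big).
\end{equation*}
Taking the supremum over $\alpha \in \mathcal{A}_t^t$ yields \eqref{DPP2}.

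The main obstacle is the super-optimality direction, and specifically the passage from a deterministic intermediate time to a general stopping time $\theta$. The selection result is available only at a fixed time, so $\theta$ must be approximated from above; this forces the terminal time of the reflected BSDE to vary with $n$, and it is precisely to control this double limit (in the terminal time and in the terminal condition) that the Fatou lemma of Proposition \ref{fatou2}—together with the verification of its hypothesis \eqref{ij} through the lower semicontinuity of $u_*$ and the domination of the obstacle—is needed. A secondary but delicate point is to guarantee that each concatenated control $\gamma_n^\varepsilon$ is genuinely $\mathcal{P}^t$-measurable, hence admissible, which rests on the pathwise splitting identities of Theorem \ref{egal} and the measurable selection of Theorem \ref{select}.
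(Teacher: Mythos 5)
Your super-optimality argument follows the paper's own proof almost step for step (dyadic discretisation $\theta_n\downarrow\theta$, measurable selection of $\varepsilon$-optimal controls at the finitely many deterministic values of $\theta_n$ via Theorem \ref{select}, concatenation with $\alpha$, the splitting property of Theorem \ref{egal}, the flow property, then Proposition \ref{fatou2}), and apart from the slip noted below it is sound. The sub-optimality half, however, contains a genuine gap. You invoke ``the splitting identity \eqref{abcd} extended from a deterministic time to the stopping time $\theta$ by the discretisation'' to assert $Y_\theta^{\alpha,t,x}=u^{\alpha(\tilde\omega,\cdot)}(\theta,X_\theta^{\alpha,t,x})$ a.s. This equality is not what discretisation yields. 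Theorem \ref{egal}, applied on each set $\{\theta^n=t_k\}$, gives $Y_{\theta^n}^{\alpha,t,x}=u^{\alpha(\cdot)}(\theta^n,X_{\theta^n}^{\alpha,t,x})\leq u(\theta^n,X_{\theta^n}^{\alpha,t,x})$ for the finitely-valued approximations $\theta^n$; the left side converges to $Y_\theta^{\alpha,t,x}$ by right-continuity of $Y$, but $(s,y)\mapsto u^{\alpha(\cdot)}(s,y)$ is merely measurable, so there is nothing to make the right side converge to its value at $\theta$, and the equality at $\theta$ cannot be asserted. What does survive the limit is the inequality $Y_\theta^{\alpha,t,x}\leq \bar u^*(\theta,X_\theta^{\alpha,t,x})$, and proving it requires exactly the ingredient your proposal never uses: the upper semicontinuity of $\bar u^*$ on $[0,T[\times\mathbb{R}$ (together with the fact that $\theta^n=T$ on $\{\theta=T\}$, where $\bar u^*=g$), which gives $\limsup_n \bar u^*(\theta^n,X_{\theta^n}^{\alpha,t,x})\leq \bar u^*(\theta,X_\theta^{\alpha,t,x})$ a.s. Since the u.s.c. of $u^*$ is the very reason the weak DPP is stated with $u^*$ rather than $u$, an argument in which it never appears cannot be complete. (The paper closes this step with the same ingredient but organised differently: flow property at $\theta^n$, comparison, the limsup version of the Fatou lemma of Proposition \ref{fatou2}, and then the u.s.c. of $\bar u^*$.)

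A second, smaller slip concerns the verification of hypothesis \eqref{ij}: you write $\liminf_n \bar u_*(\theta_n,X_{\theta_n}^{\alpha,t,x})\geq \bar u_*(\theta,X_\theta^{\alpha,t,x})\geq h(\theta,X_\theta^{\alpha,t,x})$, and earlier that ``both envelopes dominate the obstacle $h$''. On $\{\theta=T\}$ the obstacle of the limiting reflected BSDE is $\bar h(T,\cdot)=g$, not $h(T,\cdot)$, and the inequality $\bar u_*(T,X_T)=g(X_T)\geq h(T,X_T)$ can fail, since nothing forces $g\geq h(T,\cdot)$. The correct (and sufficient) check is against $\bar h(\theta,X_\theta^{\alpha,t,x})$: on $\{\theta<T\}$ it follows from $u\geq\bar h$ and the continuity of $h$, while on $\{\theta=T\}$ one has $\theta_n=T$ for every $n$, so $\liminf_n \bar u_*(\theta_n,X_{\theta_n}^{\alpha,t,x})=g(X_T^{\alpha,t,x})=\bar h(T,X_T^{\alpha,t,x})$, with equality. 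With these two corrections your proof coincides in substance with the paper's.
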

 \begin{remark}\label{R}
The proof given below also shows that this {\em weak} DPP still holds with $\theta$ replaced by $\theta^{\alpha}$ in inequalities \eqref{DPP} and \eqref{DPP2}, given a family of stopping times indexed by controls $\{ \theta^{\alpha}, \alpha \in \mathcal{A}_t^t\}.$

Note that no regularity condition is required on  $g$ to ensure this {\em weak} DPP, even \eqref{DPP2}. This is not the case in the  literature even for classical expectation (see \cite{BT}, \cite{BN},\cite{BY}). 
Moreover, our DPPs are stronger 
 than those given in these papers, where inequality \eqref{DPP} (resp. \eqref{DPP2}) is established with $u^*$ (resp. $u_*$) instead of $\bar u^*$ (resp. $\bar u_*$). Now, $\bar u^* \leq u^*$ and $\bar u_* \geq u_*$.

\end{remark}

Before giving the proof, we introduce the following notation. For each $ \theta \in {\cal T}$ and each  $\xi$ in $ L^2({\cal F}_\theta)$,
we denote by $(Y^{\alpha,t,x}_{.,  \theta}(\xi), Z^{\alpha,t,x}_{.,  \theta}(\xi), k^{\alpha,t,x}_{.,  \theta}(\xi))$ 
 the unique solution in $\mathcal{S}^2 \times \mathbb{H}^2 \times
\mathbb{H}^2_{\nu}$ of the reflected BSDE with  driver $f^{\alpha,t,x}{\bf 1}_ { \{ s \leq  \theta  \}  }$,
 terminal time $T$, 
 terminal condition $\xi$ and obstacle 
 $h(r, X_r^{\alpha,t,x}) {\bf 1}_{r < \theta} + \xi {\bf 1}_{r \geq \theta}$.
 
\dproof 
By estimates for reflected BSDEs (see Prop. 5.1 in \cite{DQS}),
 the function $u$ has at most polynomial growth at infinity.
Hence, the random variables $ \bar u ^*(\theta,X_{\theta}^{\alpha,t,x})$ and $\bar u _*(\theta,X_{\theta}^{\alpha,t,x})$ are square 
integrable.
Without loss of generality,  to simplify notation, we suppose that $t=0$. \\
We first show the second assertion (which is the most difficult), or equivalently:
\begin{equation}\label{sense2}
 {\sup_{\alpha \in \mathcal{A}}} \,Y_{0,\theta}^{\alpha,0,x}\left[\bar u _*(\theta,
X_{\theta}^{\alpha,0,x})\right]  \leq u(0,x), \quad \forall \theta \in \mathcal{T}.
\end{equation}
 Let $\theta \in \mathcal{T}.$
For each $n \in \mathbb N$, we define 
\begin{equation}\label{thetan}
\theta^n:=\sum_{k=0}^{2^n-1}t_k\textbf{1}_{A_k}+T \textbf{1}_{\theta=T},
\end{equation}
where  $t_k:=\frac{(k+1)T}{2^n}$ and $A_k:= \{\frac{kT}{2^n} \leq \theta <\frac{(k+1)T}{2^n}\}$. Note that 
$\theta^n \in \mathcal{T}$ and $\theta^n \downarrow \theta$.\\
On $\{ \theta=T \}$ we have $\theta^n=T$ for each $n$. We thus get 
$\bar u _*(\theta^n, X_{\theta^n}^{\alpha,0,x})=  \bar u _*(\theta, X_{\theta}^{\alpha,0,x})$ for each $n$ on $\{ \theta=T \}$.
Moreover, on $\{ \theta<T \}$, the lower semicontinuity of $\bar u _*$ on $[0,T[ \times \R$ together with the right continuity of the process $X^{\alpha,0,x}$ implies that 
$$\bar u _*(\theta,X_{\theta}^{\alpha,0,x}) \leq \liminf_{n \rightarrow + \infty } \bar u _*(\theta^n,X_{\theta^n}^{\alpha,0,x}) \quad {\rm a.s.}\, $$
Hence, by the comparison theorem for reflected BSDEs, we get:
\begin{equation*} 
Y_{0, \theta}^{\alpha,0,x}\left[\bar u _*(\theta,X_{\theta}^{\alpha,0,x})\right]  \leq Y_{0, \theta}^{\alpha,0,x}\left[ \liminf_{n \rightarrow + \infty } \bar u _*(\theta^n,X_{\theta^n}^{\alpha,0,x}) \right].
\end{equation*}
On $\{ \theta<T \}$, we have 
 $$\lim \inf_{n \rightarrow \infty} \bar u _*(\theta^n, X_{\theta^n}^{\alpha,0,x}) \geq \lim \inf_{n \rightarrow \infty} \overline{h}(\theta^n, X_{\theta^n}^{\alpha,0,x}) = \lim_{n \rightarrow \infty}h(\theta^n, X_{\theta^n}^{\alpha,0,x}) =h(\theta, X_{\theta}^{\alpha,0,x}) \quad \text{ a.s. }$$
by the regularity properties of $h$ on $[0,T[ \times \mathbb{R}.$
On $\{ \theta=T \}$,   $\theta^n=T$ and 
$$\bar u _*(\theta^n, X_{\theta^n}^{\alpha,0,x})= \bar u _*(T, X_{T}^{\alpha,0,x})=g(X_{T}^{\alpha,0,x})=\bar{h}(T, X_{T}^{\alpha,0,x}).  $$
Hence, we have
$\liminf_{n \rightarrow +\infty} \bar u _*(\theta^n, X_{\theta^n}^{\alpha,0,x}) \geq \bar{h}(\theta, X_\theta^{\alpha,0,x}) \text{ a.s.}$ Condition \eqref{ij} is thus satisfied with $\xi^n = \bar u _*(\theta^n, X_{\theta^n}^{\alpha,0,x})$ and $\xi _t= \bar{h}(t, X_t^{\alpha,0,x})$.
We can thus apply the  Fatou lemma for  reflected BSDEs 
(Prop. \ref{fatou2}). We thus get: 
\begin{equation} \label{eq1bis} 
Y_{0, \theta}^{\alpha,0,x}\left[\bar u _*(\theta,X_{\theta}^{\alpha,0,x})\right]  \leq Y_{0, \theta}^{\alpha,0,x}\left[ \liminf_{n \rightarrow + \infty } \bar u _*(\theta^n,X_{\theta^n}^{\alpha,0,x}) \right]
\leq \liminf_{n \rightarrow \infty}Y_{0, \theta^n}^{\alpha,0,x}\left[\bar u _* (\theta^n,X_{\theta^n}^{\alpha,0,x})\right].
\end{equation}
Let $\varepsilon >0$. Fix $n \in \mathbb N$. 
For each $k< 2^n -1$, let $\mathcal{A}_{t_k}$ be the set of the restrictions to $[t_k, T]$ of the controls $\alpha$ in $\mathcal{A}$. By Theorem \ref{select}, there exists a $P$-null set ${\cal N}$ (which depends on $n$ and $\varepsilon$) such that for each $k< 2^n -1$, there exists an $\varepsilon$-optimal control 
control $\alpha^{n,\varepsilon, k}$   in $\mathcal{A}_{t_k}^0$ $(= \mathcal{A}_{t_k})$  for the control problem at time $t_k$ 
with initial condition $\eta=  X_{t_k}^{\alpha,0,x}$, that is
satisfying the inequality 
\begin{equation}\label{optimab}
 u (t_k, X_{t_k}^{\alpha,0,x}(^{t_k}\omega)) \,\leq \, u^{\alpha^{n,\varepsilon, k}(^{t_k}\omega,\cdot)}(t_k, X_{t_k}^{\alpha,0,x}
(^{t_k}\omega))+\varepsilon 
\end{equation}
for each $ \omega \in \,  {\cal N}^c$.
Using the definition of the maps $u^{\alpha^{n,\varepsilon, k}(^{t_k}\omega,\cdot)}$ 
together with the splitting property for reflected BSDEs \eqref{abcd}, we derive that 
 there exists a $P$-null set ${\cal N}$ which contains the above one such that for each $ \omega \in \,  {\cal N}^c$ and for each $k< 2^n -1$, we have
\begin{align*}
u ^{\alpha^{n,\varepsilon,k}(^{t_k}\omega,\cdot)}(t_k, X_{t_k}^{\alpha,0,x}(^{t_k}\omega))
=\,Y_{t_k,T}^{\alpha^{n,\varepsilon,k}(^{t_k}\omega,\cdot),t_k,X_{t_k}^{\alpha,0,x}(^{t_k}\omega)}
 =\,Y_{t_k,T}^{\alpha^{n,\varepsilon,k},t_k,X_{t_k}^{\alpha,0,x}}
(^{t_k}\omega). 
\end{align*}
Here, $Y_{.,T}^{\alpha^{n,\varepsilon,k},t_k,X_{t_k}^{\alpha,0,x}}= Y_{.,T}^{f^{\alpha^{n,\varepsilon,k},t_k,X_{t_k}^{\alpha,0,x}}}
[\bar h(r, X_r^{{\alpha}^{n,\varepsilon,k}, t_k, X_{t_k}^{\alpha,0,x}})]
$ denotes the solution 
of the reflected BSDE associated with terminal time $T$, obstacle $ ( \bar h(r, X_r^{{\alpha}^{n,\varepsilon,k}, t_k, X_{t_k}^{\alpha,0,x}}))_{t_k \leq r \leq T}$ and driver\\ 
$f^{\alpha^{n,\varepsilon,k},t_k,X_{t_k}^{\alpha,0,x}}(r, y,z,k):= f(\alpha^{n,\varepsilon,k}_r,r, X_r^{\alpha,t_k,X_{t_k}^{\alpha,0,x}},y,z,k)$. \\

Set
$\alpha_{s}^{n,\varepsilon}:=\sum_{k< 2^n-1} \alpha^{n,\varepsilon,k}_s\textbf{1}_{A_k}  + \alpha_s \textbf{1}_{\{\theta_n=T \}}$. 
Since for each $k$, $A_k$ $\in$ $\mathcal{F}_{t_k}$, there exists a $P$-null set ${\cal N}$ such that, on ${\cal N}^c$, for each $k< 2^n -1$, 
we have the following equalities: 
\begin{align*}
Y_{t_k,T}^{\alpha^{n,\varepsilon,k},t_k,X_{t_k}^{\alpha,0,x}} \textbf{1}_{A_k}  
&=\,Y_{t_k,T}^{f^{\alpha^{n,\varepsilon,k},t_k,X_{t_k}^{\alpha,0,x}}\textbf{1}_{A_k} }[\bar h(r, X_r^{{\alpha}^{n,\varepsilon,k}, t_k, X_{t_k}^{\alpha,0,x}})\textbf{1}_{A_k} ]  \nonumber \\
&=\,Y_{t_k,T}^{f^{\alpha^{n,\varepsilon},\theta^n,X_{\theta^n}^{\alpha,0,x}}\textbf{1}_{A_k} }[\bar h(r, X_r^{{\alpha}^{n,\varepsilon}, \theta^n, X_{\theta^n}^{\alpha,0,x}}) \textbf{1}_{A_k} ],
\end{align*}
where, for a given driver $f$, $Y^{f \textbf{1}_{A_k}}$ denotes  the solution of the reflected BSDE 
associated with $f\textbf{1}_{A_k}$. 
We thus get $Y_{t_k,T}^{\alpha^{n,\varepsilon,k},t_k,X_{t_k}^{\alpha,0,x}} \textbf{1}_{A_k}=\, Y_{\theta^n,T}^{\alpha^{n,\varepsilon},\theta^n,X_{\theta^n}^{\alpha,0,x}} \textbf{1}_{A_k}$ on ${\cal N}^c$.
Using inequalities \eqref{optimab}, we get 
\begin{equation*}\label{opo}
\bar u _* (\theta^n, X_{\theta^n}^{\alpha,0,x})=\sum_{0 \leq k< 2^n-1} u _*(t_k, X_{t_k}^{\alpha,0,x})\textbf{1}_{A_k}+  g ( X_T^{\alpha,t,x}) 
 \textbf{1}_{\{\theta_n=T \}}
\leq Y_{\theta^n,T}^{\alpha^{n,\varepsilon},\theta^n,X_{\theta^n}^{\alpha,0,x}}  + \varepsilon  \  \text{ on } \ {\cal N}^c. 
\end{equation*} 
%
We set:
$
\Tilde{\alpha}^{n,\varepsilon}_s:=\alpha_s \textbf{1}_{s<\theta^n}+\alpha_{s}^{n,\varepsilon}\textbf{1}_{\theta^n \leq s \leq T}.$
Note that $\Tilde{\alpha}^{n,\varepsilon} \in \mathcal{A}.$
Using the comparison theorem together with the estimates on reflected BSDEs (see \cite{DQS}), we obtain
$$Y_{0, \theta^n}^{\alpha,0,x}[ \bar u _* (\theta^n,X_{\theta^n}^{\alpha,0,x}) ] \leq 
Y_{0, \theta^n}^{\alpha,0,x}[Y_{\theta^n,T}^{\alpha^{n,\varepsilon},\theta^n,X_{\theta^n}^{\alpha,0,x}}] + K\varepsilon =  Y_{0,T}^{\Tilde{\alpha}^{n,\varepsilon },0,x}+ K\varepsilon ,$$ 
where the last equality follows from the flow property. Since
$Y_{0,T}^{\Tilde{\alpha}^{n,\varepsilon },0,x} \leq u(0,x),$ using \eqref{eq1bis}, we get
$
Y_{0, \theta}^{\alpha,0,x}\left[\bar u _*(\theta,X_{\theta}^{\alpha,0,x})\right] \leq Y_{0, \theta^n}^{\alpha,0,x}[ \bar u _* (\theta^n,X_{\theta^n}^{\alpha,0,x}) ] 
\leq u(0,x) + K \varepsilon .
$
 Taking the supremum on $\alpha \in \mathcal{A}$ and letting $ \varepsilon$ tend to $0$, we obtain inequality \eqref{sense2}.

It remains to show  the first assertion.  It is sufficient to show that for each $\theta \in \mathcal{T}$,
\begin{equation}\label{sense1}
u(0,x) \leq {\sup_{\alpha \in \mathcal{A}}}\, Y_{0,\theta}^{\alpha,0,x}\left[\bar u ^*(\theta,X_{\theta}^{\alpha,0,x})\right] \text{   }.
\end{equation}
Let $\theta \in \mathcal{T}$. Let $\alpha \in \mathcal{A}$.
As above, we approximate $\theta$ by the sequence of stopping times $(\theta^n)_{n\in \mathbb N}$ defined above. Let $n\in \mathbb N$.
By applying the flow property for reflected BSDEs, we get 
$
Y_{0,T}^{\alpha,0,x}= 
Y_{0,\theta^n}^{\alpha,0,x}[Y_{\theta^n,T}^{\alpha,\theta^n,X_{\theta^n}^{\alpha,0,x}}].
$
By similar arguments as in the proof of the super--optimality principle (but without using the existence 
of  $\varepsilon$-optimal controls), we derive that
$Y_{\theta^n,T}^{\alpha,\theta^n,X_{\theta^n}^{\alpha,0,x}} \leq \bar u ^*(\theta^n,X_{\theta^n}^{\alpha,0,x})$ 
a.s.\, By the comparison theorem for reflected BSDEs, it follows that
$$ Y_{0,T}^{\alpha,0,x}= 
Y_{0,\theta^n}^{\alpha,0,x}[Y_{\theta^n,T}^{\alpha,\theta^n,X_{\theta^n}^{\alpha,0,x}}]  \leq  Y_{0,\theta^n}^{\alpha,0,x}[\bar u ^*(\theta^n,X_{\theta^n}^{\alpha,0,x})].$$
Using the Fatou lemma for reflected BSDEs (Prop. \ref{fatou2}), we get:
$$ Y_{0,T}^{\alpha,0,x} \leq \lim \sup_{n \rightarrow \infty} Y_{0,\theta^n}^{\alpha,0,x}[\bar u ^*(\theta^n,X_{\theta^n}^{\alpha,0,x})]  \leq Y_{0,\theta}^{\alpha,0,x}[\lim \sup_{n \rightarrow \infty} \bar u ^*(\theta^n,X_{\theta^n}^{\alpha,0,x})].$$
Using the upper semicontinuity property of $\bar u ^*$ on $[0,T[ \times \R$ and  $\bar u ^*(T,x)= g(x)$, we  obtain
$$Y_{0,T}^{\alpha,0,x} \leq  Y_{0,\theta}^{\alpha,0,x}[\lim \sup_{n \rightarrow \infty} \bar u ^*(\theta^n,X_{\theta^n}^{\alpha,0,x})] \leq Y_{0,\theta}^{\alpha,0,x}[\bar u ^*(\theta,X_{\theta}^{\alpha,0,x})].$$

\noindent Since  $\alpha \in \mathcal{A}$ is arbitrary, we get inequality \eqref{sense1}, which completes the proof.
\fproof

\section{Nonlinear HJB variational inequalities}\label{sec4}

\subsection{Some extensions of comparison theorems for BSDEs and  reflected BSDEs}
We provide two results which will be used to prove that the value function $u$, defined by \eqref{probform1}, is a {\em weak} viscosity solution of some nonlinear Hamilton Jacobi Bellman variational inequalities (see Theorem \ref{existprob}). 
We first show a slight extension of the comparison theorem for BSDEs given in  \cite{16}, from which we derive a comparison result between a BSDE and a reflected BSDE.
\begin{lemma}\label{A.4}
Let $t_0 \in [0,T]$ and let $\theta \in \mathcal{T}_{t_0}$. 
Let $\xi_1$ and $\xi_2$ $\in$ $L^2({\cal F}_{\theta})$.
Let $f_1$ be  a driver.
Let $f_2$ be a Lipschitz driver with Lipschitz constant $C>0$, satisfying Assumption \ref{Royer}. 
For $i=1,2$, let $(X^i_t, \pi^i_t, l^i_t)$ be a solution in $\mathcal{S}^{2} \times \H^{2} \times \H_{\nu}^{2}$ of the BSDE associated with driver $f_i$, terminal time $\theta$ and terminal condition $\xi_i$.
Suppose 
 that
$$ 
f_1 (t, X^1_t, \pi^1_t, l^1_t) \geq f_2 (t, X^1_t, \pi^1_t, l^1_t)  \;\; t_0 \leq t \leq \theta,\; \; dt\otimes dP \text{ a.s.} \quad {\rm and} \quad \xi_1 \geq \xi_2  + \vp  \text{ a.s. }
$$
 where $\varepsilon $ is a real constant.  
 Then, for each $t \in [t_0, \theta]$, we have $X_{t}^1 \geq X_{t}^2 + \varepsilon \, e^{-CT}$ a.s.
  \end{lemma}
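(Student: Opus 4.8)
The plan is to reduce the claim to the strict comparison theorem for BSDEs with jumps already established under Assumption~\ref{Royer} (cited here as \cite{16} Th.~4.2), by absorbing the constant gap $\varepsilon$ into an auxiliary process. First I would introduce the shifted process $\bar{X}^2_t := X^2_t + \varepsilon\, e^{-C(t_0 - t)}$, or more conveniently work with the difference $\delta_t := X^1_t - X^2_t$. The idea is that $\delta$ itself solves a BSDE whose driver is the difference $f_1(t,X^1_t,\pi^1_t,l^1_t) - f_2(t,X^2_t,\pi^2_t,l^2_t)$, which by the hypothesis $f_1(t,X^1_t,\pi^1_t,l^1_t) \geq f_2(t,X^1_t,\pi^1_t,l^1_t)$ can be bounded below, and whose terminal value satisfies $\delta_\theta = \xi_1 - \xi_2 \geq \varepsilon$ a.s.

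The key steps, in order, are as follows. First I would write the dynamics of $\delta_t$ and, using only the Lipschitz property of $f_2$ together with Assumption~\ref{Royer} on $f_2$, linearize the increment $f_2(t,X^1_t,\pi^1_t,l^1_t) - f_2(t,X^2_t,\pi^2_t,l^2_t)$: there exist bounded predictable coefficients and a predictable process $\gamma_t$ with $\gamma_t \geq -1$ and bounded in $L^2_\nu$ such that this increment equals $a_t\,\delta_t + b_t\,(\pi^1_t - \pi^2_t) + \langle \gamma_t, l^1_t - l^2_t\rangle_\nu$, where $|a_t| \leq C$. Second, combining this linearization with the sign hypothesis $f_1(t,X^1_t,\pi^1_t,l^1_t) \geq f_2(t,X^1_t,\pi^1_t,l^1_t)$, the process $\delta$ becomes a supersolution of a linear BSDE driven by these coefficients. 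Third, I would introduce the associated adjoint (exponential) density $\Gamma$, the solution of the linear SDE with coefficients $a_t, b_t, \gamma_t$; since $\gamma_t \geq -1$, this density $\Gamma$ is nonnegative, which is exactly where Assumption~\ref{Royer} is needed. Fourth, applying Itô's formula to $\Gamma_t\,\delta_t$ and taking conditional expectation, the martingale and measure terms vanish and the driver difference being nonnegative yields $\delta_t \geq \mathbb{E}[\Gamma_\theta\,\delta_\theta \mid \mathcal{F}_t]/\Gamma_t \geq \varepsilon\,\mathbb{E}[\Gamma_\theta \mid \mathcal{F}_t]/\Gamma_t$. Finally, because $|a_t| \leq C$, a standard estimate on the exponential density gives $\mathbb{E}[\Gamma_\theta \mid \mathcal{F}_t]/\Gamma_t \geq e^{-C(\theta - t)} \geq e^{-CT}$, whence $\delta_t \geq \varepsilon\, e^{-CT}$.

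Alternatively, and perhaps more cleanly, one can avoid reproving the linearization by invoking the strict comparison theorem directly: apply the standard (non-strict) comparison theorem of \cite{16} to the pair $(X^1, f_1)$ and the shifted pair, noting that $X^2_t + \varepsilon e^{-C(T-t)}$ is the solution of a BSDE with terminal condition $\xi_2 + \varepsilon$ and driver $f_2(t,y - \varepsilon e^{-C(T-t)}, z, k) - C\varepsilon e^{-C(T-t)}$, then checking that the hypotheses translate correctly. The main obstacle I anticipate is the careful verification at the level of the drivers that the sign condition is preserved after the shift, i.e.\ controlling the extra drift term $-C\varepsilon e^{-C(T-t)}$ against the Lipschitz modulus of $f_2$ so that the comparison hypothesis $f_1 \geq \tilde{f}_2$ genuinely holds pointwise; this is where the exponential factor $e^{-CT}$ and the constant $C$ must be chosen consistently. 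The role of Assumption~\ref{Royer} throughout is solely to guarantee the comparison principle (equivalently, the nonnegativity of the adjoint density), and no additional regularity on $f_1$ beyond being a driver is used, since $f_1$ enters only through the sign hypothesis evaluated along the solution $(X^1,\pi^1,l^1)$.
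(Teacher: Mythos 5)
Your main argument is correct and is essentially the paper's own proof written out in full: the paper simply invokes inequality (4.22) from the proof of the comparison theorem in \cite{16}, which is exactly the linearization/nonnegative-adjoint-density estimate you derive by hand (the martingale $H_{t_0,\cdot}$ there plays the role of your $\Gamma$, its nonnegativity comes from $\gamma \geq -1$ in Assumption \ref{Royer}, and the factor $e^{-CT}$ comes from the same bound $|a_t| \leq C$ on the linearization coefficient). Two small points of wording: Assumption \ref{Royer} yields an \emph{inequality}, not an equality, in the $k$-component of the linearization — harmless, since you then treat $\delta$ as a supersolution — and, exactly as in the paper's application of the cited inequality, the final step $\varepsilon\, \mathbb{E}[\Gamma_\theta \mid \mathcal{F}_t] \geq \varepsilon\, e^{-CT}$ uses $\varepsilon \geq 0$, which is the only case needed in the paper.
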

 \begin{proof}
 From inequality (4.22) in the proof of the Comparison Theorem in \cite{16}, we derive that 
 $
  X_{t_0}^1-X_{t_0}^2 \geq  e^{-CT} {\mathbb E} \left[  H_{t_0, \theta}\,  \varepsilon \, | \mathcal{F}_{t_0} \right]\;
 $ a.s.\,,
where $C$ is the Lipschitz constant of $f_2$, and 
$(H_{t_0,s})_{s \in [t_0,T]}$ is the
 non negative martingale satisfying
$d H_{t_0,s}  = H_{t_0,s^-} [ \beta_s d W_s + \int_{{\bf E}} \gamma_s(u) \Tilde{N}(ds,du)]$ with 
$H_{t_0,t_0}  = 1$,
 $(\beta_s)$ being  a predictable process bounded by $C$. The result follows.
 \end{proof}

\begin{proposition}[A comparison result between a BSDE and a reflected BSDE]\label{ecart}
Let $t_0 \in [0,T]$ and let $\theta \in \mathcal{T}_{t_0}$. 
Let $\xi_1$ $\in$ $L^2({\cal F}_{\theta})$ and
let $f_1$ be  a driver.
  Let $(X_t^{1}, \pi_t^1, l_t^1)$ be a solution of the  BSDE associated with $f_1$, terminal time $\theta$ and terminal condition $\xi^1$. Let $(\xi_t^2)$ $\in$ $\mathcal{S}^2$ and let $f_2$ be a Lipschitz driver with Lipschitz constant $C>0$ which satisfies Assumption \ref{Royer}.
 Let $(Y^2_t)$ be the solution of the reflected BSDE associated with $f_2$, terminal time $\theta$ and obstacle $(\xi_t^2)$. 
Suppose that
\begin{equation}\label{autre1}
f_1(t, X^1_t, \pi^1_t, l^1_t) \geq f_2(t, X_t^1,\pi_t^1,l^1_t), \;\; t_0 \leq t \leq \theta,\;  dt\otimes dP \text{-a.s.}\,\,{\rm and}\,\,
X_t^1 \geq \xi_t^2+\varepsilon, \text{  } t_0 \leq t \leq \theta \text{ a.s.}
\end{equation}
Then, we have $X_t^1 \geq Y^2_t+\varepsilon e^{-CT},\, $  $ t_0 \leq t \leq \theta $ a.s.

\end{proposition}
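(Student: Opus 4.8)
The plan is to reduce the reflected BSDE to a family of non-reflected BSDEs through the optimal stopping representation, and then to apply the comparison Lemma \ref{A.4} to each member of this family. Recall that the solution $Y^2$ of the reflected BSDE associated with driver $f_2$, terminal time $\theta$ and obstacle $(\xi^2_t)$ admits the optimal stopping characterization (cf. \cite{17})
\begin{equation*}
Y^2_t = \esssup_{\tau \in \mathcal{T}_{t,\theta}} \mathcal{E}^{f_2}_{t,\tau}[\xi^2_\tau] \quad \text{a.s.}, \quad t_0 \leq t \leq \theta,
\end{equation*}
where $\mathcal{T}_{t,\theta}$ denotes the set of stopping times $\tau$ with $t \leq \tau \leq \theta$ a.s., and $\mathcal{E}^{f_2}_{t,\tau}[\cdot]$ is the solution at time $t$ of the non-reflected BSDE with driver $f_2$ and terminal time $\tau$.

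First I would fix $\tau \in \mathcal{T}_{t_0,\theta}$ and observe, by the flow property of BSDEs, that the restriction of $X^1$ to $[t_0,\tau]$ solves the BSDE with driver $f_1$, terminal time $\tau$ and terminal condition $X^1_\tau$; equivalently $X^1_t = \mathcal{E}^{f_1}_{t,\tau}[X^1_\tau]$ for $t \leq \tau$. I would then apply Lemma \ref{A.4} on $[t_0,\tau]$ to the two BSDEs with common terminal time $\tau$: the first with driver $f_1$ and terminal condition $X^1_\tau$ (whose solution is $X^1$), the second with driver $f_2$ and terminal condition $\xi^2_\tau$ (whose solution at time $t$ is $\mathcal{E}^{f_2}_{t,\tau}[\xi^2_\tau]$). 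The hypotheses of Lemma \ref{A.4} are met: the driver domination $f_1(t,X^1_t,\pi^1_t,l^1_t) \geq f_2(t,X^1_t,\pi^1_t,l^1_t)$ on $[t_0,\tau]$ is the first inequality in \eqref{autre1}, and the terminal gap $X^1_\tau \geq \xi^2_\tau + \varepsilon$ follows from the second inequality in \eqref{autre1} evaluated at the stopping time $\tau$. Hence Lemma \ref{A.4} yields
\begin{equation*}
X^1_t \geq \mathcal{E}^{f_2}_{t,\tau}[\xi^2_\tau] + \varepsilon\, e^{-CT}, \quad t_0 \leq t \leq \tau, \quad \text{a.s.}
\end{equation*}

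Next, for each fixed $t \in [t_0,\theta]$, I would rewrite this as $\mathcal{E}^{f_2}_{t,\tau}[\xi^2_\tau] \leq X^1_t - \varepsilon e^{-CT}$ a.s. for every $\tau \in \mathcal{T}_{t,\theta}$, and take the essential supremum over $\tau$. Since the lower bound $X^1_t - \varepsilon e^{-CT}$ does not depend on $\tau$, the representation recalled above gives $Y^2_t \leq X^1_t - \varepsilon e^{-CT}$ a.s. for each fixed $t$. Finally, as both $X^1$ and $Y^2$ are RCLL, applying this inequality on a countable dense subset of $[t_0,\theta]$ (e.g. the rationals together with $\theta$) and using right-continuity extends it to all $t \in [t_0,\theta]$ simultaneously, which is the claim.

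The main obstacle is the structural mismatch between the reflected and non-reflected settings: Lemma \ref{A.4} compares two BSDEs sharing the same (stopping-time) terminal time, so the reflected BSDE must first be resolved into the family $\{\mathcal{E}^{f_2}_{\cdot,\tau}[\xi^2_\tau]\}_{\tau}$ via the optimal stopping representation. Two points require care: the constant $\varepsilon e^{-CT}$ furnished by Lemma \ref{A.4} must be uniform in $\tau$ so that it survives the essential supremum, which it is since $C$ and $T$ are fixed; and the driver-domination hypothesis of Lemma \ref{A.4} must be verified along the solution $(X^1,\pi^1,l^1)$ of the $f_1$-BSDE, which is precisely the form in which it is assumed in \eqref{autre1}.
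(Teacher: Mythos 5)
Your proof is correct and follows essentially the same route as the paper's: the paper likewise invokes the optimal stopping representation $Y^2_t = \mathrm{ess}\sup_{\tau} \mathcal{E}^{f_2}_{t,\tau}(\xi^2_\tau)$ (Th.\ 3.2 in the cited reference), applies Lemma \ref{A.4} for each stopping time $\tau$, and takes the supremum. Your additional details (the flow-property identification of $X^1$ as the solution of the $f_1$-BSDE with terminal time $\tau$, and the right-continuity argument upgrading the inequality from each fixed $t$ to all $t$ simultaneously) are sound elaborations of steps the paper leaves implicit.
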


\begin{proof} Let $t \in [t_0, \theta]$.
By the characterization of the solution of the RBSDE as the value function of an optimal stopping problem (see Th. 3.2 in \cite{16}), $Y_t^2= ess \sup_{\tau \in \mathcal{T}_{[ t, \theta]}} \mathcal{E}_{t, \tau}^{f^2}(\xi_\tau^2)$. By Lemma \ref{A.4}, for each $\tau \in \mathcal{T}_{[ t, \theta]}$, $X_t^1 \geq \mathcal{E}_{t, \tau}^{f^2}(\xi_\tau^2)+e^{-CT} \varepsilon$. Taking the supremum over $\tau \in \mathcal{T}_{[ t, \theta]}$, the result follows.
\end{proof}


%

\subsection{Links between the mixed control problem and HJB equation}

We introduce the  following  Hamilton Jacobi Bellman variational inequality  (HJBVI):
\begin{equation}\label{edp}
\begin{cases}
 \min(u(t,x)-h(t,x), \\\quad \inf_{\alpha \in {\bf {\bf A}}}(-\dfrac{\p u}{\p t}(t,x)-L^{\alpha}u(t,x)-f(\alpha,t,x,u(t,x), (\sigma \dfrac{\p u }{\p x})(t,x),B^{\alpha}u(t,x)) )=0, (t,x) \in [0,T) \times \mathbb{R}\\
u(T,x)=g(x), x\in \mathbb{R}
\end{cases}
\end{equation}
where $L ^{\alpha}:=A^{\alpha}+K^{\alpha},$ and for $\phi \in C^2(\mathbb{R})$,
\begin{itemize}

\item
$A^{\alpha}\phi(x) := \dfrac{1}{2}\sigma^2(x,\alpha)\dfrac{\p^2 \phi}{\p x^2}(x)+ 
b(x,\alpha) \dfrac{\p \phi }{\p x}(x)$ and $B^{\alpha} \phi(x) :=\phi(x+\beta(x,\alpha,\cdot))-\phi(x).$
\item
$K^{\alpha} \phi(x) :=\int_{{\bf E}}\left(\phi(x+ \beta(x,\alpha,e))-\phi(x)- \dfrac{\p \phi}{\p x}(x)\beta(x,\alpha,e)\right) \nu (de)$.
\end{itemize}
\begin{definition}\rm
$\bullet$ A function $u$ is said to be a {\em viscosity subsolution} of \eqref{edp} if it is upper semicontinuous    on $[0,T] \times \mathbb{R}$, 
and if for any point $(t_0,x_0) \in [0,T[ \times \mathbb{R}$ and for any $\phi \in C^{1,2}([0,T] \times \mathbb{R})$ such that $\phi(t_0,x_0)=u(t_0,x_0)$ and $\phi-u$ attains its minimum at $(t_0,x_0)$, we have
\begin{align}\label{equation4.3}
&\min(u(t_0,x_0)-h(t_0,x_0),\nonumber\\ &\inf_{\alpha \in {\bf A}}(-\dfrac{\p \phi}{\p t}(t_0,x_0)-L^{\alpha}\phi(t_0,x_0)-f(\alpha,t_0,x_0,u(t_0,x_0), (\sigma \dfrac{\p \phi}{\p x})(t_0,x_0),B^{\alpha} \phi(t_0,x_0))) \leq 0.
\end{align}
In other words, if $u(t_0,x_0)>h(t_0,x_0)$, then
\begin{equation*}
\inf_{\alpha \in  {\bf A}}(-\dfrac{\p \phi}{\p t} (t_0,x_0)-L^{\alpha}\phi(t_0,x_0)-f(\alpha,t_0,x_0,u(t_0,x_0),  (\sigma \dfrac{\p \phi}{\p x} )(t_0,x_0),B^{\alpha} \phi(t_0,x_0))) \leq 0.
\end{equation*}

$\bullet$
 A  function $u$  is said to be a {\em  viscosity supersolution } of \eqref{edp} if  it is lower semicontinuous    on $[0,T] \times \mathbb{R}$, 
 and if for any point $(t_0,x_0) \in [0,T[ \times \mathbb{R}$ and any $\phi \in C^{1,2}([0,T] \times \mathbb{R})$ such that $\phi(t_0,x_0)=u(t_0,x_0)$ and $\phi-u$ attains its maximum at $(t_0,x_0)$, we have
\begin{align*}
&\min(u(t_0,x_0)-h(t_0,x_0),\\& \inf_{\alpha \in {\bf A}}(-\dfrac{\p}{\p t} \phi(t_0,x_0)-L^{\alpha} \phi(t_0,x_0)-f(\alpha,t_0,x_0,u(t_0,x_0),  (\sigma \dfrac{\p \phi}{\p x})(t_0,x_0),B^{\alpha} \phi (t_0,x_0))) \geq 0.
\end{align*}
In other words, we have both $u(t_0,x_0) \geq h(t_0,x_0)$ and
\begin{equation}\label{supermontrer}
\inf_{\alpha \in {\bf A}}(-\dfrac{\p \phi}{\p t}(t_0,x_0)-L^{\alpha} \phi(t_0,x_0)-f(\alpha,t_0,x_0,u(t_0,x_0), (\sigma \dfrac{\p \phi}{\p x} )(t_0,x_0),B^{\alpha} \phi(t_0,x_0))) \geq 0.
\end{equation}
\end{definition}




 Using the {\em weak} dynamic programming principle given in Theorem \ref{IMP} and Proposition \ref{ecart}, we now 
 prove 
that the value function of our problem is a {\em weak} viscosity solution of the above HJBVI.

\begin{theorem}\label{existprob} The value function $u$, defined by \eqref{probform1}, is a {\em weak viscosity solution} of the  HJBVI  \eqref{edp}, in the sense that 
its  u.s.c. envelope $u^*$ 
is a viscosity subsolution of  \eqref{edp} and its l.s.c. envelope  $u_*$  is a viscosity supersolution of  \eqref{edp} (with terminal condition $u(T,x) = g(x)$).

\end{theorem}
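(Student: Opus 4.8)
The plan is to establish the subsolution property of $u^*$ and the supersolution property of $u_*$ separately, in each case arguing by contradiction and turning the weak dynamic programming principle of Theorem \ref{IMP} into a pointwise comparison between the flow of the controlled reflected BSDE and the process obtained by applying It\^o's formula to a smooth test function. Since $u^*$ is u.s.c. and $u_*$ is l.s.c. by construction, the semicontinuity requirements in the definitions of viscosity sub/supersolution hold automatically; moreover $u^*(T,\cdot)=u_*(T,\cdot)=g$, so the terminal condition is satisfied and it suffices to verify the variational inequality at interior points $(t_0,x_0)\in[0,T)\times\R$.

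\emph{Supersolution.} Since $u\ge h$ on $[0,T)\times\R$ and $h$ is continuous, taking the $\liminf$ gives $u_*\ge h$, which is the obstacle part. For the PDE part, let $\phi\in C^{1,2}$ touch $u_*$ from below at $(t_0,x_0)$ and suppose, for contradiction, that there are $\hat\alpha\in\mathbf A$ and $\eta>0$ with $-\partial_t\phi-L^{\hat\alpha}\phi-f(\hat\alpha,\cdot\,,\phi,\sigma\partial_x\phi,B^{\hat\alpha}\phi)<-\eta$ at $(t_0,x_0)$; by continuity this persists on a parabolic neighbourhood $\mathcal B\subset[0,T)\times\R$ for the constant control $\hat\alpha$. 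Choose $(t_n,x_n)\to(t_0,x_0)$ with $u(t_n,x_n)\to u_*(t_0,x_0)$ and let $\theta_n$ be the exit time of $(s,X^{\hat\alpha,t_n,x_n}_s)$ from $\mathcal B$ (slightly enlarged to absorb the bounded jumps of $X$). The super-optimality principle \eqref{DPP2}, used with the single control $\hat\alpha$, yields $u(t_n,x_n)\ge Y^{\hat\alpha,t_n,x_n}_{t_n,\theta_n}[\bar u_*(\theta_n,X_{\theta_n})]$; using $\phi\le u_*=\bar u_*$ on $\mathcal B$ and the comparison theorem for reflected BSDEs we may replace the terminal value $\bar u_*$ by $\phi$. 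It\^o's formula realizes $\phi(\cdot\,,X^{\hat\alpha})$ on $[t_n,\theta_n]$ as a non-reflected BSDE with driver $-\partial_t\phi-L^{\hat\alpha}\phi$; since a reflected BSDE dominates the non-reflected one with the same data, and since the driver gap $\eta$ produces a strictly positive spread through Lemma \ref{A.4}, we obtain $Y^{\hat\alpha,t_n,x_n}_{t_n,\theta_n}[\phi(\theta_n,X_{\theta_n})]\ge\phi(t_n,x_n)+c$ with $c>0$ independent of $n$. Letting $n\to\infty$ gives $u_*(t_0,x_0)\ge u_*(t_0,x_0)+c$, a contradiction.

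\emph{Subsolution.} Let $\phi\in C^{1,2}$ touch $u^*$ from above at $(t_0,x_0)$. If $u^*(t_0,x_0)=h(t_0,x_0)$ the first term of the minimum vanishes and there is nothing to prove, so assume $u^*(t_0,x_0)>h(t_0,x_0)$ and, for contradiction, that $\inf_{\alpha\in\mathbf A}(-\partial_t\phi-L^{\alpha}\phi-f(\alpha,\cdot\,,\phi,\sigma\partial_x\phi,B^{\alpha}\phi))\ge\eta>0$ at $(t_0,x_0)$. By continuity and compactness of $\mathbf A$, this inequality (uniformly in $\alpha$) and the strict obstacle gap $\phi\ge h+\varepsilon$ both hold on a neighbourhood $\mathcal B\subset[0,T)\times\R$. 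Take $(t_n,x_n)\to(t_0,x_0)$ with $u(t_n,x_n)\to u^*(t_0,x_0)$ and let $\theta^\alpha_n$ be the exit time of $(s,X^{\alpha,t_n,x_n}_s)$ from $\mathcal B$. By the sub-optimality principle \eqref{DPP} (with the control-indexed stopping times of Remark \ref{R}), together with $\phi\ge u^*\ge\bar u^*$ and the comparison theorem, $u(t_n,x_n)\le\sup_{\alpha}Y^{\alpha,t_n,x_n}_{t_n,\theta^\alpha_n}[\phi(\theta^\alpha_n,X_{\theta^\alpha_n})]$. Now It\^o's formula again writes $\phi(\cdot\,,X^{\alpha})$ as a BSDE whose driver exceeds $f^{\alpha}$ by at least $\eta$; since $\phi$ stays strictly above the obstacle $h$, Proposition \ref{ecart} (built on Lemma \ref{A.4} and the optimal-stopping representation of the reflected BSDE), with the driver gap controlling the terminal stopping time, gives $\phi(t_n,x_n)\ge Y^{\alpha,t_n,x_n}_{t_n,\theta^\alpha_n}[\phi(\theta^\alpha_n,X_{\theta^\alpha_n})]+c$ uniformly in $\alpha$ and $n$. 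Taking the supremum over $\alpha$ and letting $n\to\infty$ gives $u^*(t_0,x_0)\le u^*(t_0,x_0)-c$, the required contradiction.

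The step I expect to be the main obstacle is the subsolution argument. Because reflection can only raise the value of a BSDE, the bound $\phi\ge Y^{\alpha}$ cannot be extracted from reflection itself; it must instead be produced from the fact that $\phi$ stays strictly above the obstacle combined with the driver gap $\eta$, which is precisely the content of Proposition \ref{ecart}. The delicate point is to keep the resulting spread $c$ uniform over all controls $\alpha$ and all $n$, so that it survives both the supremum over $\alpha$ in the DPP and the passage to the limit; this relies on a uniform lower bound for $\mathbb E[\theta^\alpha_n-t_n]$ (the process needs a uniformly positive expected time to leave a fixed neighbourhood). A recurring minor technicality throughout is that $X^{\alpha}$ may jump out of $\mathcal B$; since $\beta$ is bounded this is handled by enlarging $\mathcal B$ slightly and appealing to continuity.
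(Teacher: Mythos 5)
Your overall architecture (argue by contradiction, localize with an exit time, realize $\phi(\cdot,X^{\alpha})$ as a BSDE via It\^o's formula, and play the weak DPP of Theorem \ref{IMP} against the comparison results Lemma \ref{A.4} and Proposition \ref{ecart}) is the same as the paper's, but there is a genuine gap in how you produce the strictly positive spread $c$, and it affects both halves. You pass from the DPP terminal value $\bar u^*(\theta^{\alpha,n},X_{\theta^{\alpha,n}})$ (resp. $\bar u_*$) to $\phi(\theta^{\alpha,n},X_{\theta^{\alpha,n}})$ using only the \emph{non-strict} inequality $\phi\geq u^*\geq\bar u^*$ (resp. $\phi\leq u_*\leq \bar u_*$), and then ask the driver gap $\eta$ to generate $c>0$. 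However, Lemma \ref{A.4} and Proposition \ref{ecart} convert \emph{terminal/obstacle} gaps into spreads: the hypothesis $\xi_1\geq\xi_2+\varepsilon$, or $X^1_t\geq\xi^2_t+\varepsilon$ up to \emph{and including} the terminal time, yields the spread $\varepsilon e^{-CT}$; they say nothing quantitative about a gap between drivers (in the proof of Lemma \ref{A.4} only the terminal gap enters the estimate $X^1_{t_0}-X^2_{t_0}\geq e^{-CT}\mathbb{E}[H_{t_0,\theta}\,\varepsilon\,|\,\mathcal{F}_{t_0}]$). In your subsolution step, once the terminal value is $\phi(\theta^{\alpha,n},X_{\theta^{\alpha,n}})$ on both sides, the hypothesis of Proposition \ref{ecart} fails exactly at $r=\theta^{\alpha,n}$, where the reflected BSDE's lower process equals that same terminal value and the gap is $0$; the proposition then only gives $\phi(t_n,x_n)\geq Y^{\alpha,t_n,x_n}_{t_n,\theta^{\alpha,n}}[\phi(\theta^{\alpha,n},X_{\theta^{\alpha,n}})]$ with $c=0$, and the contradiction evaporates (you only recover $u^*(t_0,x_0)\leq u^*(t_0,x_0)$). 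Rescuing this through the driver gap, as you propose, requires a lower bound $\inf_{\alpha,n}\mathbb{E}^{Q^{\alpha}}[\theta^{\alpha,n}-t_n]>0$, where $Q^{\alpha}$ is the measure attached to the martingale $H$ of the comparison theorem; this must be uniform over the compact control set and survive the jump-Girsanov change of measure (with only $\Psi\in L^2_\nu$ available, the needed moment estimates are not free). You correctly flag this as the main obstacle, but it is left unproved, and it is precisely the step the paper's proof is engineered to avoid.

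The missing idea is the one ingredient you never use: assume, without loss of generality, that the extremum of $u^*-\phi$ (resp. $u_*-\phi$) at $(t_0,x_0)$ is \emph{strict}. Then there is $\gamma_\epsilon>0$ with $u^*\leq\phi-\gamma_\epsilon$ outside $B_{\eta_\epsilon}(t_0,x_0)$, so at the exit time $\bar u^*(\theta^{\alpha,n},X_{\theta^{\alpha,n}})\leq\phi(\theta^{\alpha,n},X_{\theta^{\alpha,n}})-\gamma_\epsilon$ (and at $t=T$ one has $\bar u^*=g\leq u^*$, so this survives the convention on $\bar u^*$). Keeping $\bar u^*$ as the terminal value and combining this terminal gap with the obstacle gap $\phi\geq h+\epsilon$ inside the ball, Proposition \ref{ecart} applies with $\delta_\epsilon=\min(\epsilon,\gamma_\epsilon)$ and gives $\phi(t_n,x_n)\geq Y^{\alpha,t_n,x_n}_{t_n,\theta^{\alpha,n}}\bigl[h\mathbf{1}_{t<\theta^{\alpha,n}}+\bar u^*(\theta^{\alpha,n},X_{\theta^{\alpha,n}})\mathbf{1}_{t=\theta^{\alpha,n}}\bigr]+\delta_\epsilon e^{-CT}$, a spread automatically uniform in $\alpha$ and $n$, with no exit-time estimate; here the driver inequality is used only qualitatively, to fix the direction of the comparison. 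The same device (strict maximum, Lemma \ref{A.4} and the ordinary comparison theorem, terminal value $u_*$ kept rather than replaced by $\phi$) closes the supersolution half. A last, minor inaccuracy: your claim $u^*(T,\cdot)=u_*(T,\cdot)=g$ is false in general for Borelian $g$; fortunately the definitions of sub/supersolution in the paper only test points of $[0,T[\times\mathbb{R}$, so nothing in the statement depends on it.
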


\dproof
$\bullet$    {\em We first prove that $u^*$  is a subsolution of \eqref{edp}.} 
Let $(t_0,x_0) \in [0,T[ \times \mathbb{R}$ and $\phi \in C^{1,2}([0,T] \times \mathbb{R})$ be such that $\phi(t_0, x_0)=u^*(t_0,x_0)$  and $\phi(t,x) \geq u^*(t,x)$, $\forall (t,x) \in [0,T] \times \mathbb{R}$.
Without loss of generality, we can suppose that the minimum of $u^*-\phi$ attained at $(t_0,x_0)$ is strict. 
Suppose for contradiction that $u^*(t_0,x_0)>h(t_0,x_0)$ and that
\begin{equation*}
\inf_{\alpha \in {\bf A}} (-\dfrac{\p}{\p t} \phi(t_0,x_0)-L^{\alpha}\phi(t_0,x_0)-f(\alpha,t_0,x_0, \phi (t_0,x_0), (\sigma \dfrac{\p \phi}{\p x})(t_0,x_0),B^{\alpha} \phi(t_0,x_0)))>0.
\end{equation*}
By uniform continuity of $K^{\alpha} \phi$ 
 and $B^{\alpha} \phi:[0,T] \times \mathbb{R} \rightarrow {L}_{\nu}^2$ with respect to $\alpha$, we can suppose that there exists $\epsilon>0$ , $\eta_\epsilon>0$ such that:
$\forall (t,x)$ such that $t_0 \leq t \leq t_0+ \eta_\epsilon<T$ and $|x-x_0| \leq \eta_\epsilon$, we have: $\phi(t,x) \geq h(t,x) + \epsilon$ and
\begin{equation}\label{bulle}
-\dfrac{\p}{\p t}\phi(t,x)-L^{\alpha} \phi(t,x)-f(\alpha,t,x, \phi (t,x), (\sigma \dfrac{\p \phi}{\p x})(t,x),B^{\alpha} \phi(t,x)) \geq \epsilon, \text{  } \forall \alpha \in {\bf A}.
\end{equation}
 We denote by $B_{\eta_\varepsilon}(t_0,x_0)$  the ball of radius $\eta_\varepsilon$ and center $(t_0,x_0)$. By definition of $u^*$, there exists a sequence $(t_n,x_n)_n$ in $B_{\eta_\varepsilon}(t_0,x_0)$, such that $(t_n,x_n,u(t_n,x_n)) \rightarrow (t_0,x_0,u^*(t_0,x_0))$.\\
  Fix $n \in \mathbb{N}$. 
 Let $\alpha$ be  an arbitrary control of $\mathcal{A}_{t_n}^{t_n}$ and $X^{\alpha,t_n,x_n}$ the associated state process.\\
We define the stopping time $\theta^{\alpha,n}$ as
\begin{equation*}\label{4.10}
\theta^{\alpha, n}:= (t_0 + \eta_\epsilon) \wedge \inf\{s \geq t_n\,,\, |X_s^{\alpha,t_n,x_n}-x_0| \geq \eta_\epsilon \}.
\end{equation*}
Let  $\psi^{\alpha}(s,x):=\dfrac{\p}{\p s}\phi(s,x)+L^{\alpha}\phi(s,x)$.
Applying It\^o's lemma to $\phi(t, X_{t}^{\alpha,{t_n},x_n})$, we derive that 
\begin{equation*}
(\phi(s,X_s^{\alpha,t_n,x_n}), (\sigma \dfrac{\p \phi }{\p x})(s, X_s^{\alpha,t_n,x_n}), B^{\alpha_s}\phi(s,X_{s^-}^{\alpha,t_n,x_n}); s \in[t_n,\theta^{\alpha,n}])
\end{equation*}
 is the solution of the BSDE associated with the driver process  $-\psi^{\alpha_s}(s,X_s^{\alpha,t_n,x_n})$,   terminal time $\theta^{\alpha,n}$ and terminal value $\phi(\theta^{\alpha,n}, X_{\theta^{\alpha,n}}^{\alpha,t_n,x_n})$. 
By \eqref{bulle} and by definition of  $\theta^{\alpha,n}$, we get 
 \begin{equation}\label{4.13}
-\psi^{\alpha_s}(s,X_s^{\alpha,t_n,x_n}) \geq f (\alpha_s,s,X_s^{\alpha,t_n,x_n},\phi(s,X_s^{\alpha,t_n,x_n}),(\sigma \dfrac{\p \phi}{\p x})(s,X_s^{\alpha,t_n,x_n}),B \phi(s,X_s^{\alpha,t_n,x_n})) + \epsilon
\end{equation}
for each $s \in [t_n,\theta^{\alpha,n}]$. This inequality gives a relation between the drivers  $-\psi^{\alpha_s}(s,X_s^{\alpha,t_n,x_n})$ and 
$f(\alpha_s, \cdot)$ of two BSDEs.
 Now, since the minimum $(t_0,x_0)$ is strict, there exists $\gamma_{\epsilon}$
 such that: 
\begin{equation}\label{wifi}
u^*(t,x)-\phi(t,x) \leq -\gamma_\epsilon \text{  on  }  [0,T] \times \mathbb{R} \setminus B_{\eta_\epsilon}(t_0,x_0).
\end{equation}
We have
\begin{equation*}\label{wifi1}
\phi(\theta^{\alpha,n}\wedge t, X_{\theta^{\alpha,n} \wedge t}^{\alpha,t_n,x_n})=\phi(t, X_{t}^{\alpha,t_n,x_n})\textbf{1}_{t<\theta^{\alpha,n}}+\phi(\theta^{\alpha,n}, X_{\theta^{\alpha,n}}^{\alpha,t_n,x_n})\textbf{1}_{t \geq\theta^{\alpha,n}}, \,\,\, t_n \leq t \leq T \quad{\rm a.s.}
\end{equation*}
To simplify notation, set $\delta_\varepsilon:=\min(\epsilon, \gamma_{\epsilon})$. Using  \eqref{wifi} together with the definition of $\theta^{\alpha,n}$, we get 
$$
\phi( t, X_{ t}^{\alpha,t_n,x_n}) \geq (h(t, X_{t}^{\alpha,t_n,x_n})+\delta_\varepsilon)\textbf{1}_{t<\theta^{\alpha,n}}+(u^*(\theta^{\alpha,n}, X_{\theta^{\alpha,n}}^{\alpha,t_n,x_n})+\delta_\varepsilon)\textbf{1}_{t = \theta^{\alpha,n}},
 \quad  t_n \leq t\leq \theta^{\alpha,n} \quad{\rm a.s.}
$$
This, together with inequality \eqref{4.13} on the drivers and the above comparison theorem between a BSDE and a reflected BSDE  (see Proposition \ref{ecart}) lead to:
\begin{equation*}
\phi(t_n,x_n) \geq Y_{t_n, \theta^{\alpha,n}}^{\alpha,t_n,x_n}[h(t, X_{t}^{\alpha,t_n,x_n})\textbf{1}_{t<\theta^{\alpha,n}}+u^*(\theta^{\alpha,n}, X_{\theta^{\alpha,n}}^{\alpha,t_n,x_n})\textbf{1}_{t=\theta^\alpha}]+\delta_\varepsilon K, 
\end{equation*}
where  $K$ is a positive constant which only depends on $T$  and the Lipschitz constant of $f$.\\
Now, recall $(t_n,x_n,u(t_n,x_n)) \rightarrow (t_0,x_0,u^*(t_0,x_0))$ and $\phi$ is continuous 
with $\phi(t_0,x_0)=u^*(t_0,x_0)$.
We can thus assume that $n$ is sufficiently large so that
$
|\phi(t_n,x_n)-u(t_n,x_n)| \leq \delta_\varepsilon K/{2}.
$
Hence, 
\begin{equation*}\label{contract}
u(t_n,x_n) \geq Y_{t_n, \theta^{\alpha,n}}^{\alpha,t_n,x_n}[h(t, X_{t}^{\alpha,t_n,x_n})\textbf{1}_{t<\theta^{\alpha,n}}+u^*(\theta^{\alpha,n}, X_{\theta^{\alpha,n}}^{\alpha,t_n,x_n})\textbf{1}_{t=\theta^\alpha}]+
\delta_\varepsilon K/{2}.
\end{equation*}
As this inequality holds for all $\alpha \in \mathcal{A}_{t_n}^{t_n}$ and 
since $u^* \geq \bar u^*$, we get a contradiction of  the sub-optimality principle of dynamic programming principle \eqref{DPP}  (see also Remark \ref{R}).

\bigskip

$\bullet$ {\em  We now prove that $u_*$  is a viscosity supersolution of \eqref{edp}.}

Let $(t_0,x_0) \in [0,T[ \times \mathbb{R}$ and $\phi \in C^{1,2}([0,T] \times \mathbb{R})$ be such that $\phi(t_0, x_0)=u_*(t_0,x_0)$  and $\phi(t,x) \leq u_*(t,x)$, $\forall (t,x) \in [0,T] \times \mathbb{R}$. 
Without loss of generality, we can suppose that the maximum is strict in $(t_0,x_0)$.
Since the solution $(Y_s^{\alpha,t_0,x_0})$ stays above the obstacle, for each $\alpha \in \mathcal{A}$, 
we have
$
u_*(t_0,x_0) \geq h(t_0,x_0).
$
Our 	aim is to show that inequality \eqref{supermontrer} holds.\\
Suppose for contradiction that this inequality does not hold.\\
By continuity, we can suppose that there exists $\alpha \in {\bf A}$, $\epsilon>0$ and $\eta_\epsilon>0$ such that:\\
$\forall (t,x)$ with $t_0 \leq t \leq t_0+ \eta_\epsilon<T$ and $|x-x_0| \leq \eta_\epsilon$, we have:
\begin{equation}\label{boule1}
 -\dfrac{\p}{\p t} \phi(t,x)-L^\alpha \phi(t,x)-f(\alpha,t,x,\phi(t,x), (\sigma \dfrac{\p \phi }{\p x})(t,x),B^\alpha \phi(t,x)) \leq -\epsilon.
\end{equation}
We denote by $B_{\eta_\varepsilon}(t_0,x_0)$  the ball of radius $\eta_\varepsilon$ and center $(t_0,x_0)$. Let $(t_n,x_n)_n$ be a sequence in $B_{\eta_\varepsilon}(t_0,x_0)$  such that $(t_n,x_n,u(t_n,x_n)) \rightarrow (t_0,x_0,u_*(t_0,x_0))$.
We introduce the state process $X^{\alpha,t_n,x_n}$ associated with the above constant control $\alpha$ and define  the stopping time $\theta^n$ as:
\begin{equation*}
\theta^n:= (t_0 + \eta_\epsilon) \wedge \inf\{s \geq t_n \, , \, |X_s^{\alpha,t_n,x_n}-x_0| \geq \eta_\epsilon \}.
\end{equation*}
By It\^o's formula, the process 
$(\phi(s,X_s^{\alpha,t_n,x_n}), (\sigma \dfrac{\p \phi}{\p x})(s, X_s^{\alpha,t_n,x_n}), B^\alpha\phi(s,X_{s^-}^{\alpha,t_n,x_n}); s \in [t_n, \theta^n])$
 is the solution of the BSDE associated with 
terminal time $\theta^n$,  terminal value $\phi(\theta^n, X_{\theta^n}^{\alpha,t_n,x_n})$ and driver $-\psi^\alpha(s,X_s^{\alpha,t_n,x_n})$. 
The definition of the stopping time $\theta^n$ and inequality \eqref{boule1} lead to:
\begin{align}\label{eqq}
&-\psi^{\alpha}(s,X_s^{\alpha,t_n,x_n}) \leq f(\alpha,s,X_s^{\alpha,t_n,x_n},\phi (s,X_s^{\alpha,t_n,x_n}),  (\sigma \dfrac{\p \phi}{\p x} )(s,X_s^{\alpha,t_n,x_n}),B^\alpha \phi (s,X_s^{\alpha,t_n,x_n})),
\end{align}
for $t_n \leq s \leq \theta^n$ $ds \otimes dP$-a.s.\,
 Now, since the maximum $(t_0,x_0)$ is strict, there exists $\gamma_\epsilon$ (which depends on $\eta_\epsilon$) such that 
$
u_*(t,x) \geq \phi(t,x) +\gamma_\epsilon \text{  on  }  [0,T] \times \mathbb{R} \setminus B_{\eta_\epsilon}(t_0,x_0)
$
which implies
$\phi(\theta^n, X_{\theta^n}^{\alpha,t_n,x_n}) \leq u_*(\theta^n,X_{\theta^n}^{\alpha,t_n,x_n})-\gamma_\epsilon.$
Hence, using inequality  \eqref{eqq} on the drivers, 
together with  the comparison theorem for BSDEs, we derive that:
\begin{equation*}\label{CPPD}
\phi(t_n,x_n) = \mathcal{E}_{t_n, \theta^n}^{-\psi^{\alpha}}[\phi(\theta^n,X_{\theta^n}^{\alpha,t_n,x_n})]\leq \mathcal{E}_{t_n, \theta^n}^{\alpha,t_n,x_n}[u_*(\theta^n,X_{\theta}^{\alpha,t_n,x_n})-\gamma_\epsilon] \leq \mathcal{E}_{t_n,  \theta^n}^{\alpha,t_n,x_n}[u_*(\theta^n,X_{\theta^n}^{\alpha,t_n,x_n})]-\gamma_\epsilon K. 
\end{equation*}
where the second inequality follows from an extension of the comparison theorem (Lemma \ref{A.4}).
We can assume that  $n$ is sufficient large so that
$
|\phi(t_n,x_n)-u(t_n,x_n)| \leq \delta_\varepsilon K/{2}.
$
We thus get:
\begin{equation}\label{CPPD1}
u(t_n,x_n) \leq \mathcal{E}_{t_n,  \theta^n}^{\alpha,t_n,x_n}[u_*(\theta^n,X_{\theta^n}^{\alpha,t_n,x_n})]-
{\gamma_\epsilon K}/{2}. 
\end{equation}
Since $u$ satisfies the super-optimality DPP (Th. \ref{IMP}), we have
$
u(t_n,x_n) \geq \mathcal{E}_{t_n,\theta^n}^{\alpha,t_n,x_n}[\bar u_*(\theta^n,X_{\theta^n}^{\alpha,t_n,x_n})].
$ Since $\bar u_* \geq u_*$, this inequality with \eqref{CPPD1} leads to a contradiction. 
\fproof
\begin{remark}\label{caracfaible} When $g$ is only Borelian, 
the {\em weak} solution of the HJB equation \eqref{edp}
is generally not unique, even in the deterministic case (as stressed in \cite{BJ, B,BP}). 

Note that when $g$ is l.s.c., the value function $u$ of our problem can be shown to  be the minimal (l.s.c.) viscosity supersolution of the HJB equation \eqref{edp}, with terminal value greater than $g$
 (by using similar arguments as in the proof of Th.6.5 in \cite{DQS2}).

Note also that  
the paper \cite{BJ}  (see also \cite{B}) provides a characterization of the 
u.s.c. envelope $u^*$ of the value function of the deterministic control problem  
 $ u(t,x):=\sup_{\alpha \in {\cal A}_t} g(X_T^{t,x, \alpha})$, which corresponds to our problem with $\sigma=f=0$ and no stopping times controls.
 More precisely, when $g$ is {\em u.s.c.}, the map 
$u^*$ is characterized as the unique  u.s.c. viscosity solution of the HJB equation (i.e. satisfies  $u^*(T,x) = g(x)$ and the analogous of \eqref{equation4.3} but with an equality). 
The
proof is based on PDEs arguments and deterministic control theory.
 An interesting  further development of our paper 
(and of \cite{BT}) would be to 
study analogous properties in the stochastic case. 

\end{remark}

\appendix
\section{Appendix}
We give here some measurability results which are used in Section \ref{mesusection}.  We start by 
 the proof of  Proposition \ref{gborelian}. To this purpose, we first provide the following  lemma:   \begin{lemma}\label{limite}
  Let $(\Omega, {\cal F}, P)$ be a probability space. 
Suppose that the Hilbert space $L^2:= L^{2}(\Omega, {\cal F}, P)$ equipped with the usual scalar product is separable.
Let $F$ $\in L^2$.\\
Consider a sequence of functions $(g_n)_{n \in  \mathbb{N}}$ such that for each $n$, 
$g_n: \mathbb{R} \rightarrow  \mathbb{R}$ is Borelian, with $| g_n(x)| \leq C(1+ |x|^p)$.
 Suppose that sequence $(g_n)_{n \in  \mathbb{N}}$ converges pointwise.\\
  Let $g$ be the limit, defined for each 
 $x \in  \mathbb{R}$ by $g(x) := \lim_{n \rightarrow + \infty} g_n(x)$. 
 
Suppose also that for each $n \in  \mathbb{N}$, the map $\psi ^{g_n,F}$ (denoted also by $\psi ^{g_n}$) defined by 
 $
 \psi ^{g_n}: L^{2p} \cap L^2 \rightarrow \mathbb{R}; \,\,  \xi \mapsto \mathbb{E}[g_n(\xi)\,F]
$
  is Borelian,   
  $L^{2p} \cap L^2$ being equipped with the $\sigma$-algebra 
 induced by  ${\cal B}(L^2)$.
 
Then, the map $\psi ^{g}$ (denoted also by $\psi ^{g, F}$) defined by 
 \begin{equation}\label{phig}
 \psi ^{g}: L^{2p} \cap L^2 \rightarrow \mathbb{R}; \,\,   \xi \mapsto \mathbb{E}[g(\xi)\,F]
 \end{equation}
  is Borelian. 
\end{lemma}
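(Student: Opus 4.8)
The plan is to exhibit $\psi^{g}$ as the pointwise limit of the sequence $(\psi^{g_n})_{n \in \mathbb{N}}$ and then to invoke the classical fact that a pointwise limit of real-valued Borelian functions is again Borelian. Since each $\psi^{g_n}$ is Borelian by assumption, the conclusion follows at once from this identification, so the entire difficulty reduces to establishing the pointwise convergence $\psi^{g_n}(\xi) \to \psi^{g}(\xi)$ for each fixed $\xi \in L^{2p} \cap L^2$.

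First I would check that all the maps involved are well defined. Fix $\xi \in L^{2p} \cap L^2$. From the growth condition $|g_n(x)| \leq C(1+|x|^p)$, which passes to the limit and hence also gives $|g(x)| \leq C(1+|x|^p)$, we obtain the pointwise bounds $|g_n(\xi)\,F| \leq C(1+|\xi|^p)|F|$ and $|g(\xi)\,F| \leq C(1+|\xi|^p)|F|$. Since $\xi \in L^{2p}$ forces $|\xi|^p \in L^2$, while $F \in L^2$, the Cauchy--Schwarz inequality yields $|\xi|^p|F| \in L^1$; together with $|F| \in L^1$ (the measure $P$ being finite), this shows that $C(1+|\xi|^p)|F| \in L^1$. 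Hence $g_n(\xi)F$ and $g(\xi)F$ are integrable, so the quantities $\psi^{g_n}(\xi)$ and $\psi^{g}(\xi)$ are well defined.

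Next, for this same fixed $\xi$, the pointwise convergence $g_n \to g$ gives $g_n(\xi(\omega))F(\omega) \to g(\xi(\omega))F(\omega)$ for a.e.\ $\omega$, and I have just produced the $P$-integrable dominating function $C(1+|\xi|^p)|F|$. The dominated convergence theorem therefore yields $\psi^{g_n}(\xi) = \mathbb{E}[g_n(\xi)F] \to \mathbb{E}[g(\xi)F] = \psi^{g}(\xi)$. As $\xi$ was arbitrary, $\psi^{g}$ is the pointwise limit of the Borelian maps $\psi^{g_n}$ on $L^{2p}\cap L^2$ (equipped with the $\sigma$-algebra induced by ${\cal B}(L^2)$), and is thus Borelian, as claimed.

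The argument is essentially routine once the correct dominating function is identified; the only point demanding care is precisely this integrability estimate, where the polynomial growth of the $g_n$ must be matched against the integrability properties $\xi \in L^{2p}$ and $F \in L^2$ through Cauchy--Schwarz. The separability hypothesis on $L^2$ is not needed for this particular step --- it is inherited from the ambient setting of Proposition \ref{gborelian} --- so I would not invoke it in the proof of the lemma itself.
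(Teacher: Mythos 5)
Your proof is correct and follows essentially the same route as the paper's: the paper likewise applies the Lebesgue (dominated convergence) theorem to get $\psi^{g}(\xi)=\lim_{n\rightarrow+\infty}\psi^{g_n}(\xi)$ for each fixed $\xi$, and then concludes by the measurability of pointwise limits of real-valued measurable maps. Your write-up merely makes explicit the dominating function $C(1+|\xi|^p)|F|$ and its integrability via Cauchy--Schwarz, which the paper leaves implicit.
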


\dproof
By the Lebesgue theorem, for each $\xi \in L^{2p} \cap L^2$, we have 
$\psi ^{g}(\xi) \,= \, \mathbb{E}[g(\xi)\,F] = \lim_{n \rightarrow + \infty}
\mathbb{E}[g_n(\xi)\,F] \,= \lim_{n \rightarrow + \infty}\psi ^{g_n} (\xi).$
Since
 the pointwise limit of a sequence of $\mathbb{R}$-valued measurable maps is measurable, we derive that the map
$\psi ^{g}$ is Borelian.
\fproof

%
{\bf Proof of  Proposition \ref{gborelian}}.  Since by assumption the Hilbert space $L^2$ is separable,  there exists a countable orthonormal basis $\{e^{i}, i \in \mathbb{N} \}$ of $L^2$. 
For each $\xi \in L^{2p} \cap L^2$, we have 
 $\varphi^g(\xi)= g(\xi)= \sum_{i}\psi^{g,i}(\xi)\,
e_i$ in $L^2$, where   $\psi^{g,i}(\xi):= \mathbb{E}[g(\xi)\,e_i]$ for each $i \in \mathbb{N}$. 
 Hence, in order to show the measurability of the map $\psi$, it is sufficient to show the measurability of  the maps $\psi ^{g, F}$, $F$ $\in$ $L^2$.


To this purpose, we introduce the set ${\cal H}$ of bounded Borelian functions $g: \mathbb{R} \rightarrow  \mathbb{R}$ such that for each 
$F$ $\in$ $L^2$, the map $\psi ^{g, F}$ is Borelian. Note that ${\cal H}$ is a vector space. 
Suppose we have shown that for all real numbers $a,b$ with $a < b$, 
 ${\bf 1} _{]a,b[} \in {\cal H}$.
Then, by Lemmas \ref{limite} together with a monotone class theorem, 
 we derive that 
${\cal H}$ is equal to the whole set of bounded Borelian functions.
When $g$ is not bounded, the result follows by approximating $g$ by a sequence of bounded Borelian functions, and by using  Lemma \ref{limite}.

It remains to show that for all $a,b \in {\mathbb R}$ with $a < b$, we have 
 ${\bf 1} _{]a,b[} \in {\cal H}$.
Since ${\bf 1} _{]a,b[}$ is l.s.c., it follows that 
there exists a non decreasing sequence $(g_n)_{n \in  \mathbb{N}}$ of Lipschitz continuous functions (taking their values in $[0,1]$) such that for each 
 $x \in  \mathbb{R}$, ${\bf 1} _{]a,b[}(x) := \lim_{n \rightarrow + \infty} g_n(x)$.  For each $n$, since $g_n$  is Lipschitz continuous, by using Cauchy Schwartz's inequality, one can derive that the map $\psi ^{g_n}:   L^2 \rightarrow \mathbb{R}; \,\,  \xi \mapsto \mathbb{E}[g_n(\xi)\,F]
 $ is Lipschitz continuous for the norm $\| \cdot \|_{L^2}$,
 and hence Borelian.  The result then follows from 
 Lemma \ref{limite}.
 \fproof

We now state the following lemma, which is needed in the proof of Theorem \ref{mesu}.
\begin{lemma}\label{sepa} Let $t \in [0,T]$.
The Hilbert space $L^2( \Omega, {\cal F}^t_T, P)$ (simply denoted by $L_t^2$)
 is separable. 
Moreover, the Hilbert space $\mathbb{H}_t^2$ is separable. 
\end{lemma}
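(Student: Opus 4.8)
The plan is to reduce both assertions to the single standard fact that $L^2$ of a Polish space equipped with a finite Borel measure is separable (its Borel $\sigma$-algebra being countably generated), and then to realize each of $L_t^2$ and $\mathbb{H}_t^2$ as a closed linear subspace of such an $L^2$. The reason for going through a subspace argument, rather than applying the general fact directly, is that the relevant $\sigma$-algebras $\mathcal{F}_T^t$ and $\mathcal{P}^t$ are only \emph{sub}-$\sigma$-algebras of a countably generated one and need not themselves be countably generated; however, separability of a metric space is inherited by \emph{all} its subspaces, and $L^2$ of a sub-$\sigma$-algebra is always a closed (indeed complete) subspace of the ambient $L^2$. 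This is the structural observation that makes the proof work.

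First I would treat $L_t^2 = L^2(\Omega, \mathcal{F}_T^t, P)$. Since $\Omega$ is Polish for the Skorohod topology, $\mathcal{B}(\Omega)$ is countably generated and $P$ is finite, so $L^2(\Omega, \mathcal{B}(\Omega), P)$ is separable. As $\mathcal{F}$ is merely the $P$-completion of $\mathcal{B}(\Omega)$, every $\mathcal{F}$-measurable random variable agrees $P$-a.s. with a $\mathcal{B}(\Omega)$-measurable one, whence $L^2(\Omega, \mathcal{F}, P) = L^2(\Omega, \mathcal{B}(\Omega), P)$ as spaces of equivalence classes. Since $\mathcal{F}_T^t \subseteq \mathcal{F}$, the space $L^2(\Omega, \mathcal{F}_T^t, P)$ embeds isometrically as the range of the orthogonal projection $\mathbb{E}[\,\cdot\mid \mathcal{F}_T^t]$ on $L^2(\Omega, \mathcal{F}, P)$, hence is a closed subspace of a separable space and is therefore separable.

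Next I would treat $\mathbb{H}_t^2$, which is precisely $L^2(\Omega \times [t,T], \mathcal{P}^t, P \otimes dt)$. The product $\Omega \times [t,T]$ is again Polish, and for separable metric spaces the Borel $\sigma$-algebra of the product coincides with the product of the Borel $\sigma$-algebras; as $P \otimes dt$ is a finite Borel measure, $L^2(\Omega \times [t,T], \mathcal{B}(\Omega) \otimes \mathcal{B}([t,T]), P \otimes dt)$ is separable by the same argument. The predictable $\sigma$-algebra $\mathcal{P}^t$ is a sub-$\sigma$-algebra of this (completed) product $\sigma$-algebra, so $\mathbb{H}_t^2$ is the $L^2$ space of a sub-$\sigma$-algebra, again a closed subspace of a separable space, hence separable. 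The only delicate point throughout is the bookkeeping with the completed filtrations: one must check that passing to the $P$-completion enlarges neither $L^2(\Omega,\mathcal{F},P)$ nor the product space, which holds because completion only adjoins $P$-null sets and thus leaves the equivalence classes unchanged. Once this identification is in place, the closed-subspace reduction yields both claims without constructing any explicit countable dense family.
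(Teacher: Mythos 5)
Your proof is correct, but it handles the central difficulty differently from the paper. Both arguments rest on the same foundational fact (a countably generated $\sigma$-algebra plus a finite measure yields a separable $L^2$, applied to the Borel $\sigma$-algebra of the Polish space $\Omega$, resp. $\Omega\times[t,T]$). The divergence is in how the sub-$\sigma$-algebras $\mathcal{F}^t_T$ and $\mathcal{P}^t$ are treated: you correctly observe that these completed $\sigma$-algebras need not be countably generated, and you sidestep the issue abstractly, by noting that $L^2$ of any sub-$\sigma$-algebra sits inside the ambient $L^2$ as a closed subspace (the range of the conditional expectation), and that separability of a metric space is inherited by subspaces --- in fact closedness is not even needed, since \emph{every} subset of a separable metric space is separable. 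The paper instead replaces the completed $\sigma$-algebras by the uncompleted natural ones: it identifies $L_t^2$ with $L^2(\Omega,\mathcal{F}^{o,t}_T,P)$ (every $\mathcal{F}^t_T$-measurable variable has an $\mathcal{F}^{o,t}_T$-measurable version) and $\mathbb{H}_t^2$ with $L^2([t,T]\times\Omega,\mathcal{P}^{o,t}, ds\otimes dP)$ (via the Dellacherie--Meyer result that every $\mathcal{P}^t$-measurable process has an indistinguishable $\mathcal{P}^{o,t}$-measurable version), and then proves these uncompleted $\sigma$-algebras are themselves countably generated --- for $\mathcal{F}^{o,t}_T$ because it is $(T^t)^{-1}(\mathcal{B}(\Omega))$, and for $\mathcal{P}^{o,t}$ by exhibiting generators $]r,T]\times H$ with $r$ rational and $H\in\mathcal{F}^{o,t}_{r^-}$, using right-continuity of paths. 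Your route is shorter and fully general (it works for arbitrary sub-$\sigma$-algebras and needs no structure of the filtration); the paper's route is more explicit and keeps track of the identification between the completed and uncompleted filtrations, which is a bookkeeping device used elsewhere in the paper (e.g., in the proofs of Lemma 3.1 and of the measurable selection result). One small point in your write-up deserves emphasis rather than a parenthesis: the inclusion $\mathcal{P}^t\subseteq$ the $(P\otimes dt)$-completion of $\mathcal{B}(\Omega)\otimes\mathcal{B}([t,T])$ does require the check you sketch (generators $H\times\,]r,s]$ with $H\in\mathcal{F}^t_r$ split as a Borel rectangle union a subset of a $(P\otimes dt)$-null Borel rectangle), but as you say this is immediate because completion only adjoins null sets.
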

\dproof The proof is given for completeness.
 Recall first that, given a  probabilistic space $(\Omega,{\cal B},P)$,  if ${\cal B}$ is 
  countably generated, then $L^2(\Omega,{\cal B},P )$  is separable (see e.g. Proposition 3.4.5 in \cite{C}).
In our case,  $\Omega$ is separable, which implies that the Borelian $\sigma$-algebra $\mathcal{B}(\Omega)$  is   countably generated. The space  $L^2( \Omega, \mathcal{B}(\Omega), P)$ is thus separable. 
Let now $t$ be any real in $[0,T]$.  
We introduce ${\mathbb F}^{o,t}=({\cal F}_s^{o,t})_{s\geq t}$ the natural filtration of $W^t$ and $N^t$. By definition, ${\mathbb F}^t$ is 
the completed filtration of ${\mathbb F}^{o,t}$ (with respect to 
${\cal B}(\Omega)$ and $P$).
 For each $\xi$ $\in$ $L_t^2= L^2(\Omega, {\cal F}^t_T,P)$, there exists an  ${\cal F}_T^{o,t}$-measurable random variable $\xi '$ such that $\xi= \xi '$ $P$-a.s. 
 Hence, $L_t^2$ can be identified with $L^2( \Omega, {\cal F}^{o,t}_T,P)$,
 which is separable because ${\cal F}^{o,t}_T= (T^t)^{-1} ( \mathcal{B}(\Omega))$ is countably generated.\\
Now,  denote by  $\mathcal{P}^{o,t}$ the predictable $\sigma$-algebra associated with ${\mathbb F}^{o,t}$.
For each $\mathcal{P}^t$-measurable process $(X_s)$, there exists a $\mathcal{P}^{o,t}$-measurable process $(X'_s)$ indistinguishable of $(X_s)$ (see \cite{DM1}   IV \textsection 79 or \cite{J} I Prop.1.1 p.8). 
Hence, the space $\mathbb{H}_t^2$$=$ $L^2( [t,T] \times \Omega, \mathcal{P}^t, ds \otimes  dP)$ can be identified with the Hilbert space $L^2( [t,T] \times \Omega, \mathcal{P}^{o,t}, ds \otimes  dP)$.
Since the paths are right-continuous, for every $r>t$, $\mathcal{F}^{o,t}_{r^-}= 
\sigma ( \{\omega^t_u\,,\, u \in \mathbb{Q}\,\, {\rm and} \,\,  t \leq u <r\})$ and is thus  countably generated.
The predictable $\sigma$-algebra $\mathcal{P}^{o,t}$  is generated by the sets of the form $[r, T[ \times H$ 
(or $]r, T] \times H$), where $r$ is rational with $r\geq t$, and $H$ belongs to $\mathcal{F}^{o,t}_{r^-}$. It follows that $\mathcal{P}^{o,t}$ is  countably generated.
  Hence,  $ L^2( [t,T] \times \Omega, \mathcal{P}^{o,t}, ds \otimes  dP)$ is separable, which gives that $\mathbb{H}_t^2$ is separable.
\fproof

\noindent  Lemma 1.2 in \cite{Crauel} ensures the following property which is used in the proof of Theorem \ref{select}.
\begin{lemma}[A result of Measure Theory] \label{Cra}
Let $(X, {\cal F}, Q)$ be a probability space. Let ${\cal F}_Q$ be the {\em completion $\sigma$-algebra of ${\cal F}$ with respect to $Q$}, 
 that is the class of sets of the form $B \cup M$, with $B$ $\in$ $\mathcal{F} $
 and  $M$ being a {\em $Q$-null set}, that is a subset of a set  $N$ belonging to $\mathcal{F} $ with $Q$-measure $0$. 
Let $E$ be a separable Hilbert space, equipped with its scalar product $< .\,, \,.>$,  and its Borel $\sigma$-algebra ${\cal B} (E)$.\\
 Then, for each  ${\cal F}_Q$-measurable map $f: \,X \rightarrow E$, there exists an ${\cal F}$-measurable map $f_{Q}$ such that  
$f_Q (x) =  f(x)$ for $Q$-{\em almost every} $x$, in the sense that the set $\{ x \in X\,, \,f_Q (x) \neq  f(x)\}$ is included in a set  belonging to ${\cal F}$  with $Q$-measure $0$.
\end{lemma}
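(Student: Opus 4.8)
The plan is to reduce the Hilbert-space-valued statement to the scalar one, and then reassemble a vector-valued $\mathcal{F}$-measurable representative using a countable orthonormal basis of $E$, which exists since $E$ is separable. So I would first establish the scalar case: every $\mathcal{F}_Q$-measurable $h:X\to\mathbb{R}$ agrees $Q$-a.e.\ with an $\mathcal{F}$-measurable function. For an indicator $h=\mathbf{1}_A$ with $A\in\mathcal{F}_Q$, I write $A=B\cup M$ with $B\in\mathcal{F}$ and $M$ contained in a $Q$-null set $N\in\mathcal{F}$; then for $x\notin N$ one has $\mathbf{1}_A(x)=\mathbf{1}_B(x)$, so $h=\mathbf{1}_B$ $Q$-a.e. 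By linearity this extends to $\mathcal{F}_Q$-simple functions. For general $h$, I approximate it pointwise by a sequence of $\mathcal{F}_Q$-simple functions $(s_n)$; each satisfies $s_n=s_n'$ off a $Q$-null set $N_n\in\mathcal{F}$ with $s_n'$ being $\mathcal{F}$-measurable. On the complement of $N:=\bigcup_n N_n$ (still $Q$-null) one has $s_n=s_n'$ for every $n$, hence $h=\lim_n s_n'$ there; setting $h':=\limsup_n s_n'$ on the $\mathcal{F}$-measurable set where this limsup is finite and $h':=0$ elsewhere yields the desired $\mathcal{F}$-measurable representative.

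Next I would assemble the vector-valued lifting. Fix a countable orthonormal basis $\{e_i:i\in\mathbb{N}\}$ of $E$, and for the given $\mathcal{F}_Q$-measurable $f$ set $f_i(x):=\langle f(x),e_i\rangle$. Each $f_i$ is $\mathcal{F}_Q$-measurable as the composition of $f$ with the continuous functional $\langle\,\cdot\,,e_i\rangle$, so by the scalar case there is an $\mathcal{F}$-measurable $f_i'$ with $f_i=f_i'$ off a $Q$-null set $N_i\in\mathcal{F}$. Put $N:=\bigcup_i N_i$, again $Q$-null. For $x\notin N$ we have $f_i'(x)=f_i(x)$ for all $i$, whence $\sum_i f_i'(x)^2=\|f(x)\|^2<\infty$ and $\sum_i f_i'(x)\,e_i=f(x)$. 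I then define $f_Q(x):=\sum_i f_i'(x)\,e_i$ on the set $G:=\{x:\sum_i f_i'(x)^2<\infty\}$ and $f_Q(x):=0$ on $G^c$. Since $G=\{x:\sup_m\sum_{i\le m} f_i'(x)^2<\infty\}\in\mathcal{F}$ and on $G$ the partial sums $\sum_{i\le m} f_i'(x)\,e_i$ (finite sums of $\mathcal{F}$-measurable scalars times fixed vectors) converge in norm, the limit $f_Q$ is $\mathcal{F}$-measurable. As $N^c\subset G$ and $f_Q=f$ on $N^c$, we obtain $f_Q=f$ $Q$-a.e., as claimed.

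The step requiring the most care is the reassembly in the second paragraph: one must control a single exceptional null set $N$ and must define $f_Q$ separately on the $\mathcal{F}$-measurable convergence set $G$, because the $\mathcal{F}$-measurable scalar representatives $f_i'$ carry no a priori square-summability information outside $N$, so $\sum_i f_i'(x)\,e_i$ need not define an element of $E$ there. Verifying that $G\in\mathcal{F}$ and that the norm-limit of the $\mathcal{F}$-measurable partial sums is again $\mathcal{F}$-measurable is what makes the construction go through; the scalar case and the reduction to coordinates are otherwise routine.
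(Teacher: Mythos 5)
Your proof is correct. Note that the paper itself does not prove this lemma at all: it is stated as a quotation of Lemma 1.2 in Crauel's monograph \emph{Random Probability Measures on Polish Spaces}, cited immediately before the statement, so there is no internal argument to compare against. Your self-contained route is the natural one and it is sound: the scalar case (indicators via the decomposition $A=B\cup M$ with $M\subset N$, then simple functions, then pointwise limits with the $\limsup$ device) is handled correctly, and the vector-valued reassembly is where you rightly put the care --- the coordinate representatives $f_i'$ are only known to be square-summable off the null set $N$, so restricting the series definition to the $\mathcal{F}$-measurable convergence set $G=\{x:\sup_m\sum_{i\le m}f_i'(x)^2<\infty\}$ and setting $f_Q=0$ off $G$ is exactly what is needed; measurability of the norm-limit of the $\mathcal{F}$-measurable partial sums $\mathbf{1}_G\sum_{i\le m}f_i'\,e_i$ follows from the standard fact that pointwise limits of measurable maps into a metric space are measurable. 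The exceptional set $\{f_Q\neq f\}$ ends up contained in $N\in\mathcal{F}$ with $Q(N)=0$, which is precisely the (careful) formulation of a.e.\ equality that the lemma requires. What your argument buys over the paper's citation is a fully elementary, self-contained proof using only Parseval and the definition of the completion; what the citation buys is brevity and, in Crauel's setting, a statement calibrated to more general Polish-space machinery.
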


 \paragraph{A result of classical analysis} (used  in the proof of Lemma \ref{zz}).\\
  For each $n \in \mathbb N$, we consider the linear operator $P^n: L^2([0,T], dr) \rightarrow L^2([0,T], dr)$ 
 defined for each $f \in L^2([0,T], dr)$ by 
 $P^n(f)(t):=n \sum_{i=1}^{n-1}( \int_{\frac{(i-1)T}{n}}^{\frac{iT}{n}}f(r) dr) \textbf{1}_{]\frac{i T}{n}, \frac{(i+1)T}{n}]}(t). $
By Cauchy-Schwartz's inequality, we have that for each $t\in ]\frac{i T}{n}, \frac{(i+1)T}{n}]$, $1 \leq i \leq n-1$, $P^n(f)^2(t) \leq n  \int_{\frac{(i-1)T}{n}}^{\frac{iT}{n}}f^2(r) dr$. Hence,
\begin{equation}\label{Pn}
||P_n(f)||_{L^2_T}\leq ||f||_{L^2_T}; \,\,\,\quad ||P_n(f)-f||_{L^2_T} \rightarrow 0, \,\, \text{ when } n \rightarrow \infty.
\end{equation}
The above convergence clearly holds when $f$ is continuous, and the general case follows by using the uniform continuity of $f$ and the density of $\cal{C}([0,T])$ in $L^2_T$.

\paragraph{Acknowledgement} We are grateful to Guy Barles, Rainer Buckdahn, Halim Doss, and  Nicole El Karoui for valuable discussions. We thank the anonymous referees for their helpful remarks. 

\small


\begin{thebibliography}{15}

\bibitem{B}  Barles, G.  Discontinuous viscosity solutions of first-order Hamilton-Jacobi equations: a guided visit, {\em Nonlinear Analysis, Theory, Methods and Applications},  20(9),  1123-1134, 1993.



 
 \bibitem{BJ} Barron E.N. and R. Jensen 
 Optimal Control and semicontinuous viscosity solutions, {\em Pfoceedings of the American Mathematical Society} {\bf 113}(2), (1991).

%


\bibitem{BP}  Barles, G., Perthame B. (1986) Discontinuous viscosity solutions of deterministic optimal control problems, {\em Decision and Control, 1986 25th IEEE Conference on Decision and Control}. 








\bibitem{BY} Bayraktar E. and  Yao S.,   
A Weak Dynamic Programming Principle for Zero-Sum Stochastic Differential Games with Unbounded Controls,  {\em SIAM J Control Optim}, 51(3), 2036--2080, 2013. 

\bibitem{BY11} Bayraktar E. and Yao, S. Optimal stopping for Non-linear Expectations,  {\em Stoch Proc Appl} (2011), 121(2), 185-211 and 212-264.



\bibitem{BS} Bertsekas D. and  Shreve S., 
{\em Stochastic Optimal Control: The Discrete Time Case}, Academic Press, Orlando (1978).
\bibitem{BN} Bouchard, B. and M. Nutz. Weak Dynamic Programming for Generalized State Constraints,
 {\em SIAM Journal on Control and Optimization}, 2012, 50(6), 3344-3373.

\bibitem{BT} Bouchard, B. and N. Touzi. Weak Dynamic Programming Principle for Viscosity Solutions,  {\em SIAM J Control Optim}, 2011, 49 (3), 948-962.


%


\bibitem{Buckdahn} Buckdahn, R. and Li, J., (2008), Stochastic Differential Games and Viscosity Solutions of Hamilton-Jacobi-Bellman-Isaacs Equations, 
	{\em SIAM J Control  Optim} 47(1), 444-475.
	
\bibitem{Buckdahn2} Buckdahn, R. and Li, J. (2009), Probabilistic interpretation for systems of Isaacs equations with two reflecting barriers, Nonlinear Differ.Equ. Appl. 16, pg. 381-420.

\bibitem{Buckdahn1} Buckdahn, R. and Nie, T. (2014), Generalized Hamilton-Jacobi-Bellman equations with Dirichlet boundary and stochastic exit time optimal control problem, 
http://arxiv.org/pdf/1412.0730v4.pdf





\bibitem{C}  Cohn, D. (2013).  {\em Measure Theory}, second edition, Birkhauser. 

%
\bibitem{Crauel} Crauel, H., {\em  Random Probability Measures on Polish Spaces}, Stochastics monographs, vol.11, Taylor and Francis,
London and New-York (2002).











\bibitem{DM1}
Dellacherie, C. and  Meyer, P.-A. (1975).
 \textit{Probabilit\'es et Potentiel, Chap. I-IV}. Nouvelle \'edition. Hermann. {\bf MR}{0488194}
 
\bibitem{DQS}  Dumitrescu, R., Quenez M.C., Sulem A., Optimal stopping for dynamic risk measures with jumps and obstacle problems, {\em Journal of Optimization Theory and Applications}, 
167(1) (2015),  219--242.

\bibitem{DQS2}  Dumitrescu, R., Quenez M.C., Sulem A. (2015), Mixed Generalized Dynkin Games and Stochastic control in a Markovian framework, http://arxiv.org/abs/1508.02742





%

\bibitem{10} El Karoui, N. , Kapoudjian, C., Pardoux, E., Peng, S. and Quenez, M-C.   Reflected solutions of backward SDE`s, and related obstacle problems for PDE's,  \textit{The Annals of Probability}, {\bf 25}(2), 702-737, 1997.

%

\bibitem{J}
Jacod, J. (1979).
 \textit{Calcul Stochastique et Probl\`emes de martingales}, Springer.





%
%

%
%


\bibitem{LP}  Li J. and S. Peng, Stochastic optimization theory of backward stochastic differential
equations with jumps and viscosity solutions of
Hamilton Jacobi Bellman equations,
{\em Nonlinear Analysis} 70 (2009) 1776-1796.

\bibitem{LS} Lions, P.L and P.E. Souganidis. 
Differential Games, Optimal Control and Directional Derivatives of Viscosity Solutions of Bellman's and Isaacs'  Equations, {\em SIAM J. Control and Optimization}, 23(4), 1985, 566--583. 






\bibitem{Peng92}  Peng, S. (1992),
A generalized dynamic programming principle and Hamilton-Jacobi-Bellman-Equation, 
 {\em Stochastics and Stochastics Reports}, 38.


\bibitem{15}  Peng, S. (2004),
Nonlinear expectations, nonlinear evaluations and risk measures,
165-253, {\em Lecture Notes in Math.}, 1856, Springer, Berlin.
%






\bibitem{16} Quenez M.-C. and Sulem A., BSDEs with jumps, optimization and applications to dynamic risk measures, {\em Stochastic Processes and Applications} 123 (2013) 3328-3357.

\bibitem{17} Quenez M.-C.  and  Sulem A.,Reflected BSDEs and robust optimal stopping for dynamic risk measures with jumps, {\em Stochastic Processes and Applications} 124, (2014) 3031--3054.

%

\bibitem{STZ} Soner M., Touzi N. and Zhang J., Wellposedness of second order backward SDEs, Probability Theory and Related Fields,  153, 149--190.
























\end{thebibliography}
\end{document}